\newtheorem{thm}{Theorem}[section]
\newtheorem{lem}[thm]{Lemma}
\newtheorem{prop}[thm]{Proposition}
\newtheorem{rem}[thm]{Remark}
\theoremstyle{definition}
\numberwithin{equation}{section}
\theoremstyle{remark} \hsize=7.5truein \vsize=8.6truein
\def\C{{\hbox{\bf C}}}
\def\P{{\hbox{\bf P}}}
\def\E{{\hbox{\bf E}}}
\def\be{{\bf{e}}}
\def\be#1{ \begin{equation}\label{#1} }
\def\bas{\begin{align*}}
\def\eas{\end{align*}}
\def\bi{\begin{itemize}}
\def\ei{\end{itemize}}
\def\to{\rightarrow}
\def\emph#1{{\it #1}}
\def\textbf#1{{\bf #1}}
\def\to{{\tilde o}}
\theoremstyle{plain}
  \newtheorem{theorem}[subsection]{Theorem}
\theoremstyle{remark}
  \newtheorem{remark}[subsection]{\bf Remark}
\theoremstyle{definition}
  \newtheorem{definition}[subsection]{Definition}
\begin{document}
\title[Random covariance matrices]{Random covariance matrices: Universality of local statistics of  eigenvalues up to the edge}
\author{Ke Wang}
\address{Department of Mathematics, Rutgers University, Piscataway, NJ 08854, US}
\email{wkelucky@math.rutgers.edu}

\begin{abstract}
We study the universality of the eigenvalue statistics of the covariance matrices $\frac{1}{n}M^* M$ where $M$ is a large $p\times n$ matrix obeying condition $\bf{C1}$. In particular, as an application, we prove a variant of universality results regarding the smallest singular value of $M_{p,n}$. This paper is an extension of the results in \cite{tvcovariance} from the bulk of the spectrum up to the edge. 
\end{abstract}

\maketitle

%section
\section{Introduction}

The goal of this paper is to extend the Four Moment theorem established by Tao and Vu \cite{tvcovariance}  for iid covariance matrices from the bulk of spectrum to the edge. Let us first specify the matrix ensembles that will be studied.

\begin{definition}[Condition \bf{C1}]Consider a random $p\times n$ matrix $M_{n,p} =(\zeta_{ij})_{1\le i\le p, 1\le j \le n}$, where $p=p(n)$ is an integer parameter such that $p\le n$ and $\lim_{n \rightarrow \infty} p/n=y$ for some $0< y\le 1$.  We say that the matrix ensemble $M$ obeys condition $\bf{C1}$ if the random variables $\zeta_{ij}$ are jointly independent, have mean zero and variance $1$, and obey the moment condition $\sup_{i,j} \textbf{E} |\zeta_{ij}|^{C_0} \le C$ for some constant $C$ independent of $n,p$. 
\end{definition}

Given such a $p\times n$ random matrix $M$, we form the $n \times n$ \emph{covariance}  \emph{matrix} $ W= W_{n,p}= \frac{1}{n} M^*M$. This (non-negative) matrix has rank $p$ and the first $n-p$ eigenvalues are $0$. We order its remaining eigenvalues as $$0 \le {\lambda}_1 {(W)} \le {\lambda}_2 {(W)} \le \ldots \le {\lambda}_p {(W)}.$$ 

%We often abbreviate the eigenvalues $\lambda_i (W)$ as $\lambda_i$.

Denote $\sigma_1(M),\ldots, \sigma_p(M)$ to be the singular values of $M$. Notice that $\sigma_i(M) = \sqrt{n} \lambda_i (W)^{1/2}$. From the singular value decomposition, there exist orthonormal bases $u_1,\ldots,u_p \in \mathbb{C}^n$ and $v_1,\ldots,v_p$ $\in \mathbb{C}^p$ such that 
$$Mu_i=\sigma_i v_i$$ and $$M^*v_i=\sigma_i u_i.$$

The empirical spectral distribution (ESD) of the matrix $W$(which is Hermitian and thus has real eigenvalues) is a one-dimensional function $$F^{\bf W}(x)=\frac{1}{p} |\{ 1\le j \le p: \lambda_j(W) \le x\}|,$$

where we use $|\mathbf{I}|$ to denote the cardinality of a set $\mathbf{I}$.

The first fundamental result concerning the asymptotic limiting behavior of ESD for large covariance matrices is the  $\mathit{Marchenko-Pastur}$ $\mathit{Law}$ due to \cite{MP} (see also \cite{BS1}).

\begin{theorem}\label{thm:MP}
(Marchenko-Pastur  Law)
Assume a $p \times n$ random matrix $M$ obeys condition $\bf{C1}$ with $C_0 \ge 4$, and $p,n \rightarrow \infty$ such that $\lim_{n \rightarrow \infty} p/n=y \in (0,1]$, the empirical spectral distribution of the matrix $W_{n,p}= \frac{1}{n} M^*M$ converges in distribution to the Marchenko-Pastur  Law with a density function  $${{\rho}}_{MP,y}(x) := \frac{1}{2 \pi xy} \sqrt{(b-x)(x-a)} \mathbf{1}_{[a,b]}(x),$$ where $$a:=(1-\sqrt{y})^2, b:=(1+\sqrt{y})^2.$$
\end{theorem}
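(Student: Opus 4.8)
The plan is to prove Theorem~\ref{thm:MP} by the method of moments, in two stages. \emph{First}, using $C_0\ge 4$, reduce to essentially bounded entries: fix a sequence $\eta_n\to 0$ tending to zero slowly, set $\hat\zeta_{ij}:=\zeta_{ij}\mathbf 1_{\{|\zeta_{ij}|\le \eta_n\sqrt n\}}$, recenter and rescale so that $\hat\zeta_{ij}$ again has mean $0$ and variance $1$, and let $\hat W$ be the resulting covariance matrix. The fourth moment bound gives $\sum_{i,j}\Pr(|\zeta_{ij}|>\eta_n\sqrt n)=o(n)$, so $M-\hat M$ has $o(n)$ nonzero entries (hence rank $o(p)$, as $p/n\to y>0$) and the recentering matrix is negligible in Frobenius norm; by the rank and difference inequalities for sample-covariance ESDs (see \cite{BS1}), $F^{W}$ and $F^{\hat W}$ have the same limit if either exists. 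Hence we may assume $|\zeta_{ij}|\le \eta_n\sqrt n$, mean $0$, variance $1$, and $\E|\zeta_{ij}|^{m}=O\big((\eta_n\sqrt n)^{m-2}\big)$ for each fixed $m\ge 2$. \emph{Second}, we show that each moment of $F^{W}$ converges in probability to that of $\rho_{MP,y}$; since $\rho_{MP,y}$ is supported in $[0,4]$ it is determined by its moments, so this yields weak convergence $F^{W}\to\rho_{MP,y}$.

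A short computation (via the generating function of the Catalan/Narayana numbers, or a direct evaluation of the integral) gives, for every $k\ge 1$,
\[
\beta_k:=\int_a^b x^k\,\rho_{MP,y}(x)\,dx=\sum_{t=1}^{k}N(k,t)\,y^{t-1},\qquad N(k,t):=\frac1k\binom{k}{t}\binom{k}{t-1}.
\]
On the random side, the $n-p$ zero eigenvalues do not contribute for $k\ge 1$, so
\[
\int x^k\,dF^{W}(x)=\frac{1}{p}\operatorname{tr}(W^k)=\frac{1}{p\,n^{k}}\sum_{a_1,\dots,a_k\in[n]}\ \sum_{i_1,\dots,i_k\in[p]}\ \overline{\zeta_{i_1 a_1}}\,\zeta_{i_1 a_2}\,\overline{\zeta_{i_2 a_2}}\cdots\overline{\zeta_{i_k a_k}}\,\zeta_{i_k a_1}
\]
with $a_{k+1}=a_1$. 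Taking expectations and using independence and the vanishing mean, a term survives only if each edge of the associated closed walk of length $2k$ on the bipartite graph with parts $\{a_\ell\}\subseteq[n]$ and $\{i_\ell\}\subseteq[p]$ is traversed at least twice; a walk with $t$ distinct row-vertices and $s$ distinct column-vertices then has at most $k$ distinct edges, forcing $s+t\le k+1$, with equality precisely when the walk is the Euler tour of a spanning tree with every edge doubled. Rooting such a tree at $a_1$, column-vertices sit at even depth and row-vertices at odd depth; the number of rooted plane trees with $k$ edges and $t$ odd-depth vertices is $N(k,t)$, and each carries $\sim n^{k+1-t}p^{t}$ index-labellings with every doubled edge contributing $\E|\zeta_{ij}|^2=1$, so this family contributes $(p/n)^{t-1}(1+o(1))\to y^{t-1}$ to $\frac1{p\,n^k}\E\operatorname{tr}(W^k)$. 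The remaining topologies ($s+t\le k$, or some edge multiplicity $\ge 3$) contribute $O(1/n)$ or $O(\eta_n^{c})$ by the truncated moment bounds. Summing, $\E\int x^k\,dF^{W}\to\beta_k$.

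Finally, a second-moment estimate of the same combinatorial type, or a bounded-difference argument over the rows of $M$, gives $\operatorname{Var}\big(\tfrac1p\operatorname{tr}(W^k)\big)=O(1/n)$, so each moment of $F^{W}$ converges to $\beta_k$ in probability, and almost surely after the standard Borel--Cantelli strengthening of the variance bound. By the moment method and the determinacy of $\rho_{MP,y}$, $F^{W}$ converges weakly to $\rho_{MP,y}$, as claimed. The main obstacle is the combinatorial core of the second stage: showing that only the doubled-tree walks survive in the leading order, that they are enumerated by the Narayana numbers with exactly the stated powers of $y$, and that every lower-order graph topology is negligible uniformly, all using only the (truncated) fourth-moment hypothesis. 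A technically different but equivalent route is the Stieltjes transform method: writing $s_n(z)=\frac1p\operatorname{tr}\big((W-z)^{-1}\big)$ for $\operatorname{Im} z>0$, Schur-complement identities for rank-one perturbations together with concentration show that $s_n$ satisfies, up to $o(1)$ errors, the quadratic self-consistent equation whose unique solution in the upper half-plane is the Stieltjes transform of $\rho_{MP,y}$, and inverting the Stieltjes transform gives the claim; there the difficulty is relocated to the perturbation and concentration estimates and to the stability of the fixed-point equation, which degenerates at the spectral edges $a$ and $b$.
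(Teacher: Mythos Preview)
The paper does not prove Theorem~\ref{thm:MP}; it is quoted as a classical background result, attributed to Marchenko--Pastur \cite{MP} (see also \cite{BS1}), and no argument is given beyond the citation. So there is nothing in the paper to compare your proof against at the level of this statement.

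Your sketch is one of the two standard proofs and is essentially correct. The truncation step is fine once you are explicit that ``$o(n)$ nonzero entries'' is a statement in probability (Markov on the expected count, which is $O(p/(\eta_n^4 n))$, suffices provided $\eta_n^4 n\to\infty$); the identification of the leading graph topologies with doubled plane trees and their enumeration by Narayana numbers is exactly the Bai--Silverstein combinatorics, and the variance bound by pairing two closed walks is routine. The only point to be careful with is the uniform control of the lower-order topologies using only the truncated moment bounds $\E|\hat\zeta|^{m}=O((\eta_n\sqrt n)^{m-2})$; here one should check that for walks with an edge of multiplicity $\ge 3$ the saving from having at most $k$ vertices compensates the growth of the higher moment, which is where the choice $\eta_n\to 0$ is actually used.

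It is worth noting that the alternative route you mention at the end---the Stieltjes transform and the self-consistent quadratic equation---is precisely the machinery the paper develops in Section~4 to prove the much sharper local law (Theorem~\ref{thm:concentration}). So while the paper does not supply a proof of the global Marchenko--Pastur law, the Stieltjes-transform approach would be the one ``in the spirit of the paper,'' and your moment-method argument is a genuinely different (and for the global statement, more elementary) route. The moment method gives the global law cleanly under minimal moment hypotheses but says nothing about scales below $O(1)$; the Stieltjes-transform approach requires more analytic work (concentration of quadratic forms, stability of the fixed point near the edges $a,b$) but is what one needs to push down to the microscopic scale, which is the real content of the paper.
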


We introduce the notation of frequent events, depending on $n$, in increasing order of likelihood. 
\begin{definition}[Frequent events]\label{freq-def}\cite{tvrandom}  Let $E$ be an event depending on $n$.
\begin{itemize}
\item $E$ holds \emph{asymptotically almost surely} if $\P(E) = 1-o(1)$.
\item $E$ holds \emph{with high probability} if $\P(E) \geq 1-O(n^{-c})$ for some constant $c>0$ (independent of $n$).
\item $E$ holds \emph{with overwhelming probability} if $\P(E) \geq 1-O_C(n^{-C})$ for \emph{every} constant $C>0$ (or equivalently, that $\P(E) \geq 1 - \exp(-\omega(\log n))$).
\item $E$ holds \emph{almost surely} if $\P(E)=1$.  
\end{itemize}
\end{definition}

\begin{definition}[Matching]  We say that two complex random variables $\zeta, \zeta'$ \emph{match to order $k$} for some integer $k \geq 1$ if one has $\E \text{Re}(\zeta)^m \text{Im}(\zeta)^l = \E \text{Re}(\zeta')^m \text{Im}(\zeta')^l$ for all $m,l \geq 0$ with $m+l \leq k$. \end{definition}

Our main result is the following Four Moment theorem, which extends the result (Theorem 6) in \cite{tvcovariance} to the edge of the spectrum. The proof is analogous to the proofs in \cite{tvrandom}, \cite{tvrandom2} and \cite{tvcovariance} and will be presented in Section \ref{section:main}.
\begin{theorem}[Four Moment Theorem]\label{thm:4main}
For sufficiently small $c_0 >0$ and sufficiently large $C_0 >0$ ($C_0=10^4$ will suffice) the following holds for every $k\ge 1$. Let $M=(\zeta_{ij})_{1\le i\le p, 1\le j\le n }$ and $M'=(\zeta'_{ij})_{1\le i\le p, 1\le j\le n }$ be two random matrices satisfying condition \textbf{C1} with the indicated constant $C_0$, and assume that for each $i,j$ that $\zeta_{ij}$ and $\zeta'_{ij}$ match to order 4. Let $W, W'$ be the associated covariance matrices. Assume also that $p/n\rightarrow y$ for some $0<y \le 1$.

Let $G:\mathbb{R}^k \rightarrow \mathbb{R}$ be a smooth function obeying the derivative bounds 
\begin{equation}\label{eq:derivative}
|\nabla^j G(x)| \le n^{c_0}
\end{equation}
for all $0\le j \le 5$ and $x\in \mathbb{R}^k$.

Then for any $1\le i_1 < i_2<\ldots <i_k \le n$, and for $n$ sufficiently large depending on $k, c_0$, we have 
\begin{equation}\label{eq:conclusion}
|\E(G(n\lambda_{i_1}(W),\ldots,n\lambda_{i_k}(W) ) ) - \E(G(n\lambda_{i_1}(W'),\ldots,n\lambda_{i_k}(W') ) )| \le n^{-c_0}.
\end{equation}

If $\zeta_{ij}$ and $\zeta'_{ij}$ only match to order 3 rather 4, then the conclusion (\ref{eq:conclusion}) still holds provided that one strengthens (\ref{eq:derivative}) to 
$$|\nabla^j G(x)| \le n^{-jc_1}$$ for all $0\le j \le 5$ and $x\in \mathbb{R}^k$ and any $c_1 >0$, provided that $c_0$ is sufficiently small depending on $c_1$.
\end{theorem}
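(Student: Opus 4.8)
The plan is to follow the Tao–Vu strategy from \cite{tvrandom}, \cite{tvrandom2}, \cite{tvcovariance}: swap the entries $\zeta_{ij}$ for $\zeta'_{ij}$ one at a time (a Lindeberg replacement argument over the $pn$ coordinates), and control the change in $\E(G(n\lambda_{i_1}(W),\ldots))$ at each swap by a Taylor expansion. The key analytic input is a resolvent-based formula: if $\zeta$ is a single entry and we write $W = W(\zeta)$, then $n\lambda_i(W)$ can be expanded as a function of $\zeta$ via the Cauchy integral of the resolvent $(W - z)^{-1}$, so that
\begin{equation*}
n\lambda_i(W(\zeta)) = \sum_{m=0}^{4} c_{i,m}\, \zeta^m + O(\text{error}),
\end{equation*}
where the coefficients $c_{i,m}$ depend on the rest of the matrix but not on $\zeta$, and the error term is small with overwhelming probability. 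Because $G$ has at most $n^{c_0}$-size derivatives up to order $5$, matching the first four moments of $\zeta$ and $\zeta'$ kills the first four terms of the corresponding expansion of $G(n\lambda_{i_1},\ldots,n\lambda_{i_k})$, leaving a per-swap error of size roughly $n^{c_0}\cdot n^{-5/2+o(1)}$; summing over $pn = O(n^2)$ swaps gives $O(n^{c_0 - 1/2 + o(1)})$, which is $\le n^{-c_0}$ for $c_0$ small. The order-$3$ variant is identical except that only three terms cancel, so one needs the stronger derivative bounds $n^{-jc_1}$ to absorb the surviving $\zeta^4$ term.

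The main technical step — and the only place where going ``up to the edge'' rather than staying in the bulk costs real work — is establishing the \emph{eigenvalue delocalization and rigidity estimates} needed to make the resolvent expansion valid near the spectral edges $a = (1-\sqrt y)^2$ and $b = (1+\sqrt y)^2$ (and, when $y=1$, near the hard edge $0$). Concretely, one needs: (i) a \emph{gap bound}, that consecutive eigenvalues $n\lambda_{i}(W)$ near the edge are separated by at least $n^{-O(c_0)}$ with high probability — near the soft edge the natural eigenvalue spacing is $n^{-2/3}$ rather than $n^{-1}$, which actually \emph{helps}; (ii) \emph{delocalization of eigenvectors}, $\|u_i\|_\infty, \|v_i\|_\infty = O(n^{-1/2+o(1)})$, including for edge eigenvectors; and (iii) control on the individual coordinates of the resolvent $(\frac1n M^*M - z)^{-1}$ for $z$ within $n^{-1+o(1)}$ of the edge. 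These are the covariance-matrix analogues of the estimates in \cite{tvrandom2}; for the soft edges one imports a \emph{local Marchenko–Pastur law} down to scale $n^{-1+o(1)}$ (I would cite the relevant isotropic/local MP law or reprove it via the self-consistent equation for the Stieltjes transform), and for the hard edge when $y=1$ one needs the corresponding least-singular-value bounds.

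Granting those estimates, the per-swap analysis proceeds as follows. Fix a coordinate $(i,j)$ and condition on all other entries. Write $W_0$ for the matrix with the $(i,j)$ entry zeroed out, and expand
\begin{equation*}
(W(\zeta) - z)^{-1} = (W_0 - z)^{-1} - \sum_{r\ge 1} (W_0-z)^{-1}\bigl(\Delta(\zeta)(W_0-z)^{-1}\bigr)^r,
\end{equation*}
where $\Delta(\zeta) = W(\zeta) - W_0$ is a low-rank perturbation (rank $\le 2$) with entries linear in $\zeta$ and $\bar\zeta$. Integrating $\frac{1}{2\pi i}\oint z\,(W(\zeta)-z)^{-1}\,dz$ around a contour enclosing only $\lambda_{i_s}(W)$ and using the gap bound to keep the contour at distance $\gtrsim n^{-1-O(c_0)}$ from the rest of the spectrum yields the polynomial-in-$\zeta$ expansion of $n\lambda_{i_s}(W)$ with coefficients bounded (via delocalization and the resolvent bounds) by $n^{O(c_0)}$ and a truncation error $\le n^{-5/2+O(c_0)}$ on an overwhelming-probability event. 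Plugging into $G$, Taylor-expanding to order $4$, taking expectations, and invoking the moment matching gives $|\E G(\ldots W) - \E G(\ldots W')| \le n^{-5/2 + O(c_0) + c_0}$ per swap; a standard truncation argument handles the low-probability bad event (using the uniform $C_0$-th moment bound from \textbf{C1} and that $|G| \le n^{c_0}$ plus the trivial bound $|G|$ grows at most polynomially). Summing the $pn \le n^2$ telescoping terms yields \eqref{eq:conclusion}. I expect the rigidity/delocalization estimates at the edge to be the genuine obstacle; the Lindeberg swapping and Taylor bookkeeping are, modulo care, routine adaptations of \cite{tvcovariance}.
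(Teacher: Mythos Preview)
Your Lindeberg-swapping outline with resolvent expansion is correct and is the same route the paper takes. The one structural difference worth flagging is how the gap input is obtained: you treat the gap bound as a technical ingredient to be established alongside delocalization from a local Marchenko--Pastur law, but in the paper the gap property for general \textbf{C1} ensembles (Theorem~\ref{thm:gap-c1}) is itself proved \emph{via} a version of the Four Moment theorem, so there is a mild bootstrap. Concretely, the paper first proves a \emph{truncated} Four Moment theorem (Theorem~\ref{thm:4main-truncated}) in which $G$ takes extra arguments $Q_{i_s}(\mathbf{M})$ measuring inverse eigenvalue gaps and is supported on $\{Q_{i_s}\le n^{c_0}\}$; this version needs no a priori gap hypothesis. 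The gap property is then established first under an additional exponential-decay assumption on the entries (Theorem~\ref{thm:gap-expo}, by the backward-propagation-of-gap argument of \cite{tvrandom2}), and transferred to arbitrary \textbf{C1} matrices via a third-moment matching lemma (Lemma~\ref{matching}) plus the truncated theorem. The full Theorem~\ref{thm:4main} then follows by combining the truncated theorem with the now-available gap property (Proposition~\ref{prop:Q}). Your direct approach goes through once Theorem~\ref{thm:gap-c1} is in hand, but be aware that the paper obtains that gap bound through this bootstrap rather than purely from the local law; a local Marchenko--Pastur law at scale $n^{-1+o(1)}$ by itself does not give a lower bound on individual gaps. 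Two smaller packaging points: the paper runs the swapping on the augmented Hermitian matrix $\mathbf{M}=\bigl(\begin{smallmatrix}0&M^*\\M&0\end{smallmatrix}\bigr)$ so as to quote the Hermitian machinery of \cite{tvrandom2} directly rather than redo the resolvent expansion for $W$; and it restricts attention to the soft edges $a>0$, setting aside the $y=1$ hard edge you mention.
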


The next theorem is an extension of Theorem 17 in \cite{tvcovariance}, which is used in the proof of Theorem \ref{thm:4main} and is of independent interest as well. The proof is delayed to Section \ref{section:main}.

\begin{definition}[Gap property up to the edge] Let $M$ be a random matrix obeying condition \textbf{C1}. We say M obeys the \emph{gap property} if for every $c>0$ and every $1\le i \le p$, one has $|\lambda_{i+1}(W)-\lambda_i(W)| \ge n^{-1-c}$ with high probability.
\end{definition}

\begin{theorem}[Gap theorem up to the edge]\label{thm:gap-c1}Let $M$ be a random matrix satisfying condition \textbf{C1}. Then $M$ obeys the gap property.
\end{theorem}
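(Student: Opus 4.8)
The plan is to follow the Tao--Vu strategy for gap theorems: first prove the gap property for the model ensemble in which $M$ has i.i.d.\ standard complex Gaussian entries — so that the eigenvalues of $W$ form a determinantal point process with an explicit (Laguerre) kernel — and then transfer it to an arbitrary ensemble obeying \textbf{C1} by a four-moment comparison. It is convenient to work throughout with the point process $\{n\lambda_i(W)\}_i=\{\sigma_i(M)^2\}_i$, since the bound to be proved, $|\lambda_{i+1}(W)-\lambda_i(W)|\ge n^{-1-c}$ with high probability, is the same as $|n\lambda_{i+1}(W)-n\lambda_i(W)|\ge n^{-c}$ with high probability, for a fixed index $i$. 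Fix $i$ and $c>0$. By the Marchenko--Pastur law (Theorem~\ref{thm:MP}) together with eigenvalue concentration estimates valid up to the edge, the point $\sigma_i(M)^2$ lies, with overwhelming probability, in an interval $J$ of length $n^{o(1)}$ about the classical location of the $i$-th eigenvalue in the $\sigma^2$-scale; on that event, $\{|n\lambda_{i+1}(W)-n\lambda_i(W)|\le n^{-c}\}$ forces some interval $I$ (a slight enlargement of $J$ being covered by $O(n^{c+o(1)})$ intervals of length $2n^{-c}$) to contain at least two of the $\sigma_j(M)^2$. So it suffices to bound $\P(N_I\ge 2)$ for each such short interval $I$, where $N_I$ is the number of points of the process in $I$, distinguishing whether the classical location of the $i$-th eigenvalue lies in the bulk of $[na,nb]$, near the soft edge $nb$, near the soft edge $na$ (when $y<1$), or near the hard edge $0$ (when $y=1$).

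For the Gaussian model one has the elementary bound $\P(N_I\ge 2)\le\E\binom{N_I}{2}=\tfrac12\iint_{I\times I}\rho_2(x,y)\,dx\,dy\le\tfrac12\,|I|^2\sup_{I}\rho_1^2$, where $\rho_1,\rho_2$ are the one- and two-point correlation functions of the Laguerre kernel $K_n$ and we used $\rho_2(x,y)=K_n(x,x)K_n(y,y)-|K_n(x,y)|^2\le\rho_1(x)\rho_1(y)$. What is needed is that $\rho_1=O(n^{o(1)})$ on the interval in question: this is the sine-kernel regime in the bulk and the Airy-kernel regime near the soft edges $na$, $nb$, and in both cases the Plancherel--Rotach-type asymptotics of the Laguerre polynomials supply the bound (near a soft edge the density is in fact smaller than in the bulk, so the estimate is only easier there). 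This gives $\P(N_I\ge 2)=O(n^{-2c+o(1)})$, and summing over the $O(n^{c+o(1)})$ intervals covering $J$ yields $\P(|\lambda_{i+1}(W)-\lambda_i(W)|\le n^{-1-c})=O(n^{-c+o(1)})$ for the Gaussian ensemble — exactly what the gap property asks for. (When $y=1$ the density near $0$ is polynomially large in $n$ in the $\sigma^2$-scale, the spacing there being on the finer scale $n^{-2}$; the statement is then meaningful only for indices $i$ bounded away from the hard edge, the smallest eigenvalues being handled separately through the variant concerning the smallest singular value, and requiring the Bessel kernel.)

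To pass to a general ensemble obeying \textbf{C1}, I would first truncate the entries, using the moment bound $\sup_{i,j}\E|\zeta_{ij}|^{C_0}\le C$, so that they are bounded by $n^{o(1)}$ while perturbing the spectrum of $W$ by at most $n^{-\omega(1)}$ with overwhelming probability, and then run the Lindeberg-type exchange of \cite{tvrandom,tvrandom2,tvcovariance}: replace the real and imaginary parts of the entries of $M$ one at a time by Gaussians with matching low-order moments, bounding each replacement by a resolvent/Taylor expansion, and using the moment-matching device there to reconcile the fact that a general \textbf{C1} entry need not match a Gaussian to fourth order. Here the relevant observable is (a smoothed version of) $\P(|n\lambda_{i+1}-n\lambda_i|\le n^{-c})$, which depends only on the two eigenvalues $\lambda_i,\lambda_{i+1}$ that the concentration estimates have already localized, so the expansion need only be carried to the order dictated by four matching moments and the comparison can be performed directly — consistently with the logical order, since the gap theorem feeds into the proof of Theorem~\ref{thm:4main} rather than the reverse.

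The main obstacle is precisely the edge analysis that separates this paper from the bulk treatment in \cite{tvcovariance}. One must push the eigenvalue concentration/localization input, and the correlation-kernel bounds, all the way to — and slightly past — the soft edge $nb$ (and $na$ when $y<1$): this requires the Airy asymptotics of the Laguerre kernel with explicit, $n$-uniform error terms, replacing the bulk sine-kernel estimates used before. The case $y=1$ adds a further difficulty at the hard edge $0$, where the Bessel-kernel scaling and the $n^{-2}$ spacing scale mean the statement must be read (and used) only for indices away from that edge. Finally one must check that the truncation and single-entry swap estimates of the transfer step remain valid uniformly up to the edge, which is routine but not automatic.
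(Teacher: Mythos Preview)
Your plan has a genuine gap in the transfer step, and the paper's route is substantially different.

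The core problem is moment matching. A general entry $\zeta_{ij}$ obeying \textbf{C1} need not match a Gaussian even to third order: nothing in \textbf{C1} forces the third moment to vanish. So a direct Lindeberg swap to the Gaussian ensemble does not produce an error of the required size; the ``moment-matching device'' you invoke does not supply a Gaussian comparison. The matching lemma (Lemma~\ref{matching}) only produces a \emph{bounded} random variable $\tilde\zeta$ matching $\zeta$ to order three, and $\tilde\zeta$ is not Gaussian in general. Thus, after matching, you still owe the gap property for a non-Gaussian ensemble, and your determinantal argument no longer applies. There is a second, related difficulty: the observable you propose, a smoothed indicator of $\{|n\lambda_{i+1}-n\lambda_i|\le n^{-c}\}$, has $j$th derivatives of size $n^{jc}$, which is the wrong direction for the three-moment comparison (that version needs $|\nabla^j G|\le n^{-jc_1}$). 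And there is a potential circularity: the resolvent/Taylor expansion controlling a single swap in the Lindeberg scheme is governed by the quantity $Q_i$, i.e.\ by the very eigenvalue separation you are trying to establish.

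The paper avoids all of this by \emph{not} using a Gaussian reference. It first proves the gap property directly for every ensemble with exponentially decaying entries (Theorem~\ref{thm:gap-expo}), using the backwards-propagation-of-gap machinery: one defines the regularized gap $g_{i,l,p}$ in \eqref{eq:reggap}, shows via the interlacing identity that a small gap at level $p+1$ forces one of a finite list of unlikely events at level $p$ (Proposition~\ref{prop:backwardsgap}), and uses the concentration and delocalization results (Theorems~\ref{thm:concentration} and~\ref{thm:delocalization}) to bound their probabilities. Only then does a comparison enter: one takes the bounded $\tilde\zeta$ from Lemma~\ref{matching} matching $\zeta$ to order three, applies Theorem~\ref{thm:gap-expo} to the associated matrix $M'$, and transfers to $M$ via the \emph{truncated} Four Moment Theorem (Theorem~\ref{thm:4main-truncated}) applied to the observable $\eta(Q_i)$. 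The point is that $\eta(Q_i)$ is a smooth cutoff on the scale $n^{c_0}$, so its derivatives are of size $n^{-jc_0}$, compatible with the three-moment hypothesis, and the truncated theorem carries its own cutoff on $Q_i$, sidestepping the circularity. Your Laguerre-kernel computation, while correct for the Gaussian model, does not replace the backwards-propagation step.
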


\begin{remark} When $y=1$, the singular value statistics around $a=0$ turn out to be different since the density function $\rho_{\text{MP},y}(x)$ has a singularity at $x=0$. The hard edge is not really an edge, which makes it easier to deal with.  In this paper, we will focus on the edge case when $a>0$.

\end{remark}

\begin{remark} We consider $n$ as an asymptotic parameter tending
to infinity.  We use $X \ll Y$, $Y \gg X$, $Y = \Omega(X)$, or $X =
O(Y)$ to denote the bound $X \leq CY$ for all sufficiently large $n$
and for some constant  $C$. Notations such as
 $X \ll_k Y, X= O_k(Y)$ mean that the hidden constant $C$ depend on
 another constant $k$. $X=o(Y)$ or $Y= \omega(X)$ means that
 $X/Y \rightarrow 0$ as $n \rightarrow \infty$; the rate of decay here will be allowed to depend on other parameters.  We write $X = \Theta(Y)$ for $Y \ll X \ll Y$. We view vectors $x \in \C^n$ as column vectors. The Euclidean norm of a vector $x \in \C^n$ is defined as $\|x\| := (x^* x)^{1/2}$. 
\end{remark}

This paper is organized as follows: in Section 2, we prove a variant of universality result regarding the smallest singular value as an application of the Four Moment theorem. In Section 3, we mention a few basic results from linear algebra and probability. In Section 4, we provide the proofs of two technical lemmas, which are the major content of this paper. Finally, in Section 5, we give the proofs of the Gap theorem (Theorem \ref{thm:gap-c1}) and Four Moment theorem (Theorem \ref{thm:4main}). The argument draws heavily from those in \cite{tvrandom}, \cite{tvrandom2} and \cite{tvcovariance}, thus we only focus on the changes needed to complete the proofs.

\textbf{Acknowledgments:}
The author would like to thank Van H. Vu for useful discussion and his guidance  through to the completion of this paper.

\section{Applications}

In a similar way as \cite{tvcovariance} (Section 1.3), equipped with the Four Moment theorem, we can obtain universality results for large classes of random matrices. Let us demonstrate through some examples, focusing on the results for the lower edge of the spectrum. Recall $\sigma_1(M_{p,n})$ denotes the smallest singular value of $M_{p,n}$.

We adapt the notation in \cite{tvsingular}. In this section, $M_{p,n}(\zeta)$ denotes the random $n\times p$ matrix whose entries are iid copies of a (real or complex-valued) random variable $\zeta$. We say $\zeta$ is $\mathbb{R}$-\emph{normalized} ($\mathbb{C}$-\emph{normalized}) if $\zeta$ is real-valued with $\E\zeta=0$ and $\E\zeta^2=1$  (complex-valued with $\E\zeta=0, \E\text{Re}(\zeta)^2=\E\text{Im}(\zeta)^2=1/2,$ and $\E\text{Re}(\zeta)\text{Im}(\zeta)=0$). %A model example of $\mathbb{R}$-\emph{normalized} random variable is the real gaussian $g_{\mathbb{R}}:= N(0,1)$, while a model example of $\mathbb{C}$-\emph{normalized} random variable is the complex gaussian $g_{\mathbb{C}}$ whose real and imaginary parts are iid copies of $\frac{1}{\sqrt{2}}g_{\mathbb{R}}$. 
A complex random variable $\zeta$ of mean $0$ and variance $1$ is  \emph{Gaussian divisible} if it has the same distribution as $(1-t)^{1/2}\zeta'+t^{1/2}\zeta''$ for some $0<t<1$, where $\zeta',\zeta''$ are independent with mean $0$ and variance one, with $\zeta''$ complex Gaussian.

For the case when $p=n$, the limiting distribution for Gaussian models was computed by Edelman \cite{Edelman}.  Recently a universality result has been established by Tao and Vu \cite{tvsingular} for the entries with bounded sufficiently high moments.

For the non-square case, where $p\le n$, the distribution of the smallest singular value of $\mathbb{R}$-\emph{normalized} ($\mathbb{C}$-\emph{normalized}) entries has been studied by Borodin and Forrester \cite{BF}. Later, Feldheim and Sodin \cite{FS} proved a universality result for certain sample covariance matrices:
\begin{thm}\label{thm:FS}
Let $M_{p,n}=(\zeta_{ij})_{1\le i\le p,1\le j\le n}$ be a random covariance matrix, where $p=p(n)\le n$ tends to infinity as $n\rightarrow \infty$ and $\limsup_{n\rightarrow \infty} p/n <1$. Let $\zeta_{ij}$ be independent for all $i,j$.

If $\zeta_{ij}$ are $\mathbb{R}$-\emph{normalized}, exponential decaying and symmetric (that is, $\zeta_{ij}$ and $-\zeta_{ij}$ have the same distribution), then 
\begin{equation}\label{eq:TW1}
\displaystyle \frac{\sigma_1{(M)^2} -(p^{1/2}-n^{1/2}) }{(p^{1/2}-n^{1/2})(p^{-1/2}-n^{-1/2})^{1/3}} \rightarrow \text{TW}_1;
\end{equation}

If $\zeta_{ij}$ are $\mathbb{C}$-\emph{normalized}, exponential decaying and symmetric, then 
\begin{equation}\label{eq:TW2}
\displaystyle \frac{\sigma_1{(M)^2} -(p^{1/2}-n^{1/2}) }{(p^{1/2}-n^{1/2})(p^{-1/2}-n^{-1/2})^{1/3}} \rightarrow \text{TW}_2,
\end{equation}
where $\text{TW}_1,\text{TW}_2$ denote the Tracy-Widom distributions.
\end{thm}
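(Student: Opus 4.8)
The natural approach to Theorem~\ref{thm:FS} --- essentially the one of Feldheim and Sodin \cite{FS} --- is the \emph{method of moments at a soft edge}, in the tradition of Soshnikov and of Sodin's analysis of band matrices: compare the given ensemble with the Laguerre/Wishart ensemble (Gaussian entries of the same normalization), whose left-edge limiting law is $\text{TW}_1$ in the real case and $\text{TW}_2$ in the complex case by Borodin and Forrester \cite{BF}, and show that the relevant edge statistics agree asymptotically. The hypothesis $\limsup p/n<1$ is exactly what keeps the left edge of $MM^*$ a genuine \emph{soft} edge, near $(\sqrt n-\sqrt p)^2$ (of order $n$), so that Tracy--Widom scaling --- not hard-edge Bessel behaviour --- is the correct one.

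First I would fix the affine map $L$ sending that edge to a Chebyshev endpoint and a Chebyshev-type polynomial $P_s$ of degree $s$ adapted to the soft edge in the Soshnikov sense; then the rescaled eigenvalue counting function of $MM^*$ near the edge equals $\operatorname{tr}\,P_s\big(L(MM^*)\big)$ up to errors that vanish in the limit, provided $s$ is taken to grow of order $n^{2/3}$, matched to the $O(n^{1/3})$ fluctuation scale appearing in the statement. Expanding $P_s$ in powers, this reduces everything to the combinatorial evaluation of the moments $\E\,\operatorname{tr}\,(MM^*)^j$ for $j$ up to $s$, as $n\rightarrow\infty$ in this range.

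Next I would expand each such moment as a sum over closed walks on the complete bipartite graph with vertex classes $[p]$ and $[n]$, weighting each walk by the product of the entries $\zeta_{ij}$ it uses raised to their multiplicities, and take expectations. Only walks using every edge an even number of times survive --- this is precisely where the symmetry of $\zeta_{ij}$ is used, since it kills all odd-moment contributions --- so the surviving combinatorics coincides with that of the Gaussian model. Using the exponential-decay bound on $\E|\zeta_{ij}|^r$ to control the geometric-type sums over edge multiplicities, I would show that walks whose reduced multigraph is not a ``doubled tree'' (equivalently, not of Dyck type) contribute only to lower order, uniformly over the relevant range of $s$, while the doubled-tree walks reproduce exactly the Wishart count. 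Substituting the resulting matching moment asymptotics back into the Chebyshev-trace representation, together with a crude a priori localization of $\lambda_{\min}(MM^*)=\sigma_1(M)^2$ near the edge (from lower-order moment bounds), yields convergence of the rescaled $\sigma_1(M)^2$ to the Gaussian limit: $\text{TW}_1$ in the orthogonal ($\mathbb{R}$-normalized) case, $\text{TW}_2$ in the unitary ($\mathbb{C}$-normalized) case.

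The main obstacle is the combinatorial estimate in the regime $s\sim n^{2/3}$: one must bound the \emph{total} contribution of all non-Dyck walks by $o$ of the main term \emph{uniformly} over this growing range, which is exactly what is needed to resolve the $O(n^{1/3})$-scale edge fluctuations. Both hypotheses of the theorem enter here --- exponential decay so that the sums over high edge-multiplicities converge with room to spare, and symmetry so that the potentially leading ``odd'' walks drop out rather than corrupting the match with the Gaussian combinatorics. (In the present paper Theorem~\ref{thm:FS} is invoked only as this known base case, to which the Four Moment Theorem~\ref{thm:4main} is then applied in order to remove the symmetry and exponential-decay assumptions.)
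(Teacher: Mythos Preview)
Your proposal is essentially correct, and you have already identified the key point yourself in the final parenthetical: the paper under review does \emph{not} prove Theorem~\ref{thm:FS} at all. It is simply quoted from Feldheim and Sodin \cite{FS} as a known input, and the paper's own contribution is to combine it (and the Ben Arous--P\'ech\'e result) with the Four Moment Theorem~\ref{thm:4main} to obtain the subsequent corollaries. So there is nothing in the paper to compare your argument against.

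That said, your sketch of the moment method --- Chebyshev-type test polynomials of degree $\sim n^{2/3}$, expansion of $\operatorname{tr}(MM^*)^j$ over bipartite walks, symmetry killing odd-multiplicity edges, exponential decay controlling high multiplicities, and reduction to doubled-tree (Dyck) walks matching the Wishart combinatorics --- is a faithful outline of the strategy in \cite{FS}. You have also correctly isolated the genuine technical crux: the uniform control of non-Dyck contributions over the full range $j\lesssim n^{2/3}$, which is what distinguishes edge universality from mere global-law moment matching.
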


In a same way as the authors proving (\cite{tvrandom}, Theorem 9) and (\cite{tvcovariance}, Theorem 11),  one can get the following (also see Figure 1 for numerical simulations):
\begin{thm}
The conclusions of Theorem \ref{thm:FS} can be extended to the case when $p=p(n)\le n$ tends to infinity as $n\rightarrow \infty$ and $\lim_{n\rightarrow \infty} p/n =y \in (0,1]$, and when $M_{p,n}=(\zeta_{ij})_{1\le i\le p,1\le j\le n}$ obeying condition \textbf{C1} with sufficiently large constant $C_0$, and $\zeta_{ij}$ have vanishing third moment.
\end{thm}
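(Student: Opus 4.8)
The plan is to transfer the Tracy--Widom limits of Theorem~\ref{thm:FS} from a Gaussian (Laguerre) model to $M$ by means of the Four Moment Theorem, Theorem~\ref{thm:4main}, in its third-order form, following the pattern of \cite{tvrandom} and \cite{tvcovariance}; this application is exactly a setting in which that weaker version is what one needs. Fix $0<y<1$, so that the soft lower edge sits at $a=(1-\sqrt y)^2>0$ (the case $y=1$ is the hard edge, treated separately --- cf.\ the Remark after Theorem~\ref{thm:gap-c1}), and consider the $\mathbb{R}$-normalized case; the $\mathbb{C}$-normalized case is identical, with a complex Gaussian and $\mathrm{TW}_2$ in place of the real Gaussian and $\mathrm{TW}_1$. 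Let $M_0=(\xi_{ij})$ be the $p\times n$ matrix of i.i.d.\ real Gaussian entries of mean $0$ and variance $1$, and $W_0=\frac1n M_0^*M_0$. Since the entries of $M$ have mean $0$, variance $1$ and vanishing third moment, and a standard real Gaussian has the same first, second and (vanishing) third moments, $M$ and $M_0$ match entrywise to order $3$; they need not match to order $4$, which is precisely why the vanishing-third-moment hypothesis is imposed and why the third-order conclusion of Theorem~\ref{thm:4main} has to be used. Both ensembles obey condition \textbf{C1} with $C_0=10^4$ (by hypothesis for $M$, trivially for $M_0$), and $M_0$ satisfies the hypotheses of Theorem~\ref{thm:FS} --- Gaussians are symmetric with exponential tails, and $\limsup_{n\rightarrow\infty}p/n=y<1$ --- so that $(\sigma_1(M_0)^2-\gamma_n)/\delta_n$ converges in distribution to $\mathrm{TW}_1$, where $\gamma_n,\delta_n$ denote the centering and scaling in (\ref{eq:TW1}) and $\delta_n=\Theta(n^{1/3})$ is the usual soft-edge scale.

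The reduction works because extracting this limit only requires test functions living on the scale $\delta_n=\Theta(n^{1/3})$, a polynomial scale, hence admissible in the third-order Four Moment Theorem. First fix $c_1\in(0,1/3)$, say $c_1=1/4$, and then $c_0>0$ small enough for the third-order case of Theorem~\ref{thm:4main}. Since $\sigma_1(M)^2=n\lambda_1(W)$ and $\sigma_1(M_0)^2=n\lambda_1(W_0)$, these are precisely the quantities compared by Theorem~\ref{thm:4main} with $k=1$, $i_1=1$. Given $t\in\mathbb{R}$ and $\eta>0$, I would pick smooth $\phi_-,\phi_+$ with $\mathbf{1}_{(-\infty,t-\eta]}\le\phi_-\le\mathbf{1}_{(-\infty,t]}\le\phi_+\le\mathbf{1}_{(-\infty,t+\eta]}$ and $\|\phi_\pm^{(j)}\|_\infty\le C_j(\eta)$ for $0\le j\le 5$, and set $G_\pm(x):=\phi_\pm((x-\gamma_n)/\delta_n)$, so that $|\nabla^j G_\pm(x)|\le C_j(\eta)\,\delta_n^{-j}=O_\eta(n^{-j/3})\le n^{-jc_1}$ for all $x$ once $n$ is large depending on $\eta$. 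Theorem~\ref{thm:4main} then gives
\[
|\E\, G_\pm(n\lambda_1(W))-\E\, G_\pm(n\lambda_1(W_0))|\le n^{-c_0}
\]
for such $n$, while $\E\, G_\pm(n\lambda_1(W_0))=\E\,\phi_\pm((\sigma_1(M_0)^2-\gamma_n)/\delta_n)\rightarrow\E\,\phi_\pm(\mathrm{TW}_1)$ by the weak convergence above and the boundedness and continuity of $\phi_\pm$; hence $\E\, G_\pm(n\lambda_1(W))\rightarrow\E\,\phi_\pm(\mathrm{TW}_1)$. Using $\mathbf{1}_{(-\infty,t-\eta]}\le\phi_-\le\mathbf{1}_{(-\infty,t]}$ this gives
\[
F_{\mathrm{TW}_1}(t-\eta)\le\liminf_{n\rightarrow\infty}\P((\sigma_1(M)^2-\gamma_n)/\delta_n\le t),
\]
and symmetrically $\limsup_{n\rightarrow\infty}\P((\sigma_1(M)^2-\gamma_n)/\delta_n\le t)\le F_{\mathrm{TW}_1}(t+\eta)$. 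Letting $\eta\downarrow 0$ and using the continuity of the Tracy--Widom distribution function yields convergence in distribution of $(\sigma_1(M)^2-\gamma_n)/\delta_n$ to $\mathrm{TW}_1$, i.e.\ (\ref{eq:TW1}); (\ref{eq:TW2}) follows in the same way with complex Gaussians.

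The main obstacle --- in fact essentially the only delicate point --- is that one is confined to the third-order form of Theorem~\ref{thm:4main}, whose hypothesis forces the test functions to be flat on polynomial scales, $|\nabla^j G|\le n^{-jc_1}$. This would be useless if the edge fluctuations lived on an $O(1)$ or logarithmic scale, but at the soft edge of the Marchenko--Pastur law the Tracy--Widom window has width $\Theta(n^{1/3})$, so suitably rescaled bump functions do meet the requirement for every $c_1<1/3$; one only has to keep the order of the limits correct --- $n\rightarrow\infty$ before $\eta\downarrow 0$ --- so that the dependence of $C_j(\eta)$ on $\eta$ is harmless. The rest is bookkeeping: identifying the smallest singular value with $\sqrt{n}\,\lambda_1(W)^{1/2}$, so that the relevant index in Theorem~\ref{thm:4main} is $i_1=1$ (the $n-p$ forced zero eigenvalues of $W$ play no role here), and noting that Theorem~\ref{thm:FS} already supplies the Tracy--Widom limit for the pure Gaussian Laguerre ensemble, so that no intermediate Gaussian-divisible matrix is needed.
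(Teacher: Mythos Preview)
Your proposal is correct and follows essentially the same route the paper has in mind: the paper does not spell out a proof but points to \cite[Theorem~9]{tvrandom} and \cite[Theorem~11]{tvcovariance}, and those arguments are precisely the comparison-to-Gaussian via the third-order Four Moment Theorem that you carry out, exploiting the $\Theta(n^{1/3})$ soft-edge window to manufacture test functions with $|\nabla^j G|\le n^{-jc_1}$ for any $c_1<1/3$. Your handling of the order of limits ($n\to\infty$ before $\eta\downarrow 0$) and the sandwich with the continuous $F_{\mathrm{TW}}$ is the standard way to finish, and your restriction to $0<y<1$ is consistent with the paper's own Remark that the machinery here is built for the soft edge $a>0$.
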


\begin{figure} [htbp]
  \centering 
  \includegraphics[scale=0.47]{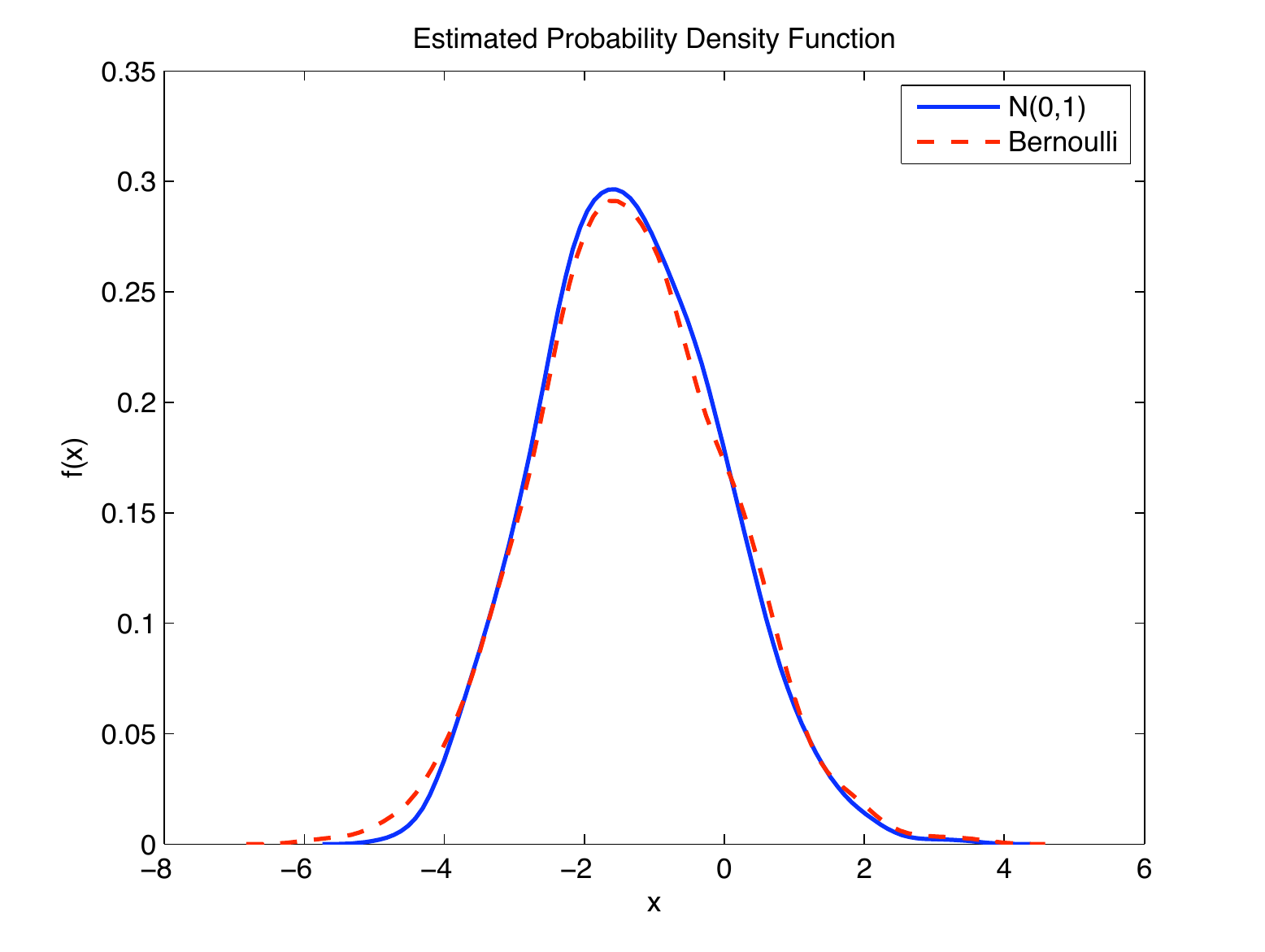}% 
   \hfill{}% 
   \includegraphics[scale=0.47]{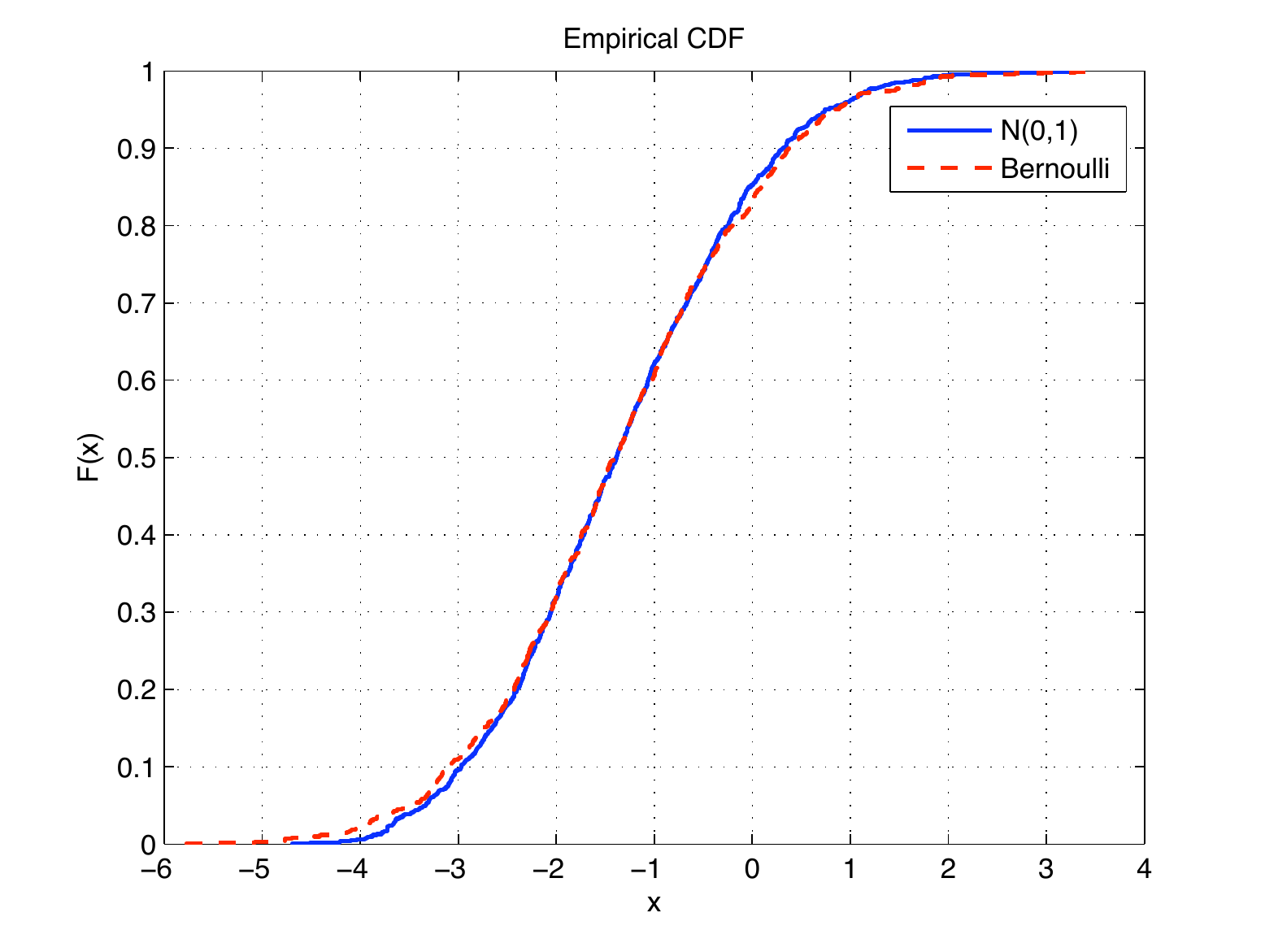} 
  \hspace{1.5in}\parbox{6in}{\caption{Plotted above are the empirical PDF and CDF of  the distribution of $\sigma_1(M_{p,n}(\zeta))^2$ (normalized as in (\ref{eq:TW1})) for $n=800,p=600$, based on data from 1000 random matrices. The blue solid curves were generated with $\zeta=N(0,1)$, while the red dashed curves were generated with $\zeta$ a random Bernoulli variable, taking values $\pm 1$ with probability $1/2$ each.}}
%  \label{fig:tw}
\end{figure}

Recently, Ben Arous and P\'ech\'e proved universality at the edge for  random matrices $M_{p,n}(\zeta)$ with i.i.d. entries of \emph{Gaussian divisible} distribution.  And with the matching theorem (Corollary 30, \cite{tvrandom}), we can drop the third moment condition whereas  $\zeta$ is assumed to be supported on at least three points.

\begin{thm}
The conclusion (\ref{eq:TW2}) of Theorem \ref{thm:FS} can be extended to the case when $p=p(n)\le n$ tends to infinity as $n\rightarrow \infty$ and $\lim_{n\rightarrow \infty} p/n=y \in (0,1] $, and when $M_{p,n}=(\zeta_{ij})_{1\le i\le p,1\le j\le n}$ obeying condition \textbf{C1} with sufficiently large constant $C_0$, and $\zeta_{ij}$ are  supported on at least three points.
\end{thm}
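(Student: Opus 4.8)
The plan is to follow the template of (\cite{tvrandom}, Theorem 9) and (\cite{tvcovariance}, Theorem 11): one replaces the given ensemble by a Gaussian divisible one, for which the Tracy--Widom limit is already known, and the replacement is justified by the Four Moment Theorem (Theorem \ref{thm:4main}) --- which is exactly the tool this paper makes available \emph{up to the edge}. As in (\ref{eq:TW2}), one keeps the $\C$-normalization of Theorem \ref{thm:FS}, and one works at the soft lower edge, i.e.\ with $0<y<1$ so that $a=(1-\sqrt y)^2>0$; the hard edge $y=1$ is of a different (and easier) nature, as recalled in the Remark above, and needs only the modifications indicated there.

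First, one invokes the matching theorem (\cite{tvrandom}, Corollary 30): since each $\zeta_{ij}$ is $\C$-normalized, obeys the moment bound in condition \textbf{C1}, and --- crucially --- is supported on at least three points, there is a \emph{Gaussian divisible} random variable $\tilde\zeta_{ij}$ of mean $0$ and variance $1$, still obeying a uniform moment bound of the type in \textbf{C1}, that matches $\zeta_{ij}$ to order $4$. Set $\tilde M=M_{p,n}(\tilde\zeta)$ and $\tilde W=\frac1n\tilde M^*\tilde M$. By the edge universality theorem of Ben Arous and P\'ech\'e for covariance matrices with i.i.d.\ Gaussian divisible entries, the smallest singular value $\sigma_1(\tilde M)$, normalized as in (\ref{eq:TW2}), converges in distribution to $\text{TW}_2$; in particular the limiting distribution function $F_{\text{TW}_2}$ is continuous.

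Then comes the transfer. Recall $\sigma_1(M)^2=n\lambda_{i_1}(W)$ and $\sigma_1(\tilde M)^2=n\lambda_{i_1}(\tilde W)$, where $i_1$ is the single index of the smallest nonzero eigenvalue, and let $\ell_n$ denote the affine normalization appearing in (\ref{eq:TW2}); its slope is of order $n^{-1/3}$. Write $X_n:=\ell_n(n\lambda_{i_1}(W))$ and $\tilde X_n:=\ell_n(n\lambda_{i_1}(\tilde W))$. Fix $t\in\R$, fix a small $c$ with $0<c<1/3$, put $\delta:=n^{-c}$, and choose smooth $H_-\le\mathbf{1}_{(-\infty,t]}\le H_+$ equal to the indicator outside the $\delta$-neighbourhood of $t$ and satisfying $|H_\pm^{(j)}|\le\delta^{-j}$ for $0\le j\le 5$. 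Because $\ell_n$ has slope of order $n^{-1/3}$, the compositions $G_\pm:=H_\pm\circ\ell_n$, regarded as functions of the un-normalized variable, obey the derivative bound (\ref{eq:derivative}) with room to spare. Theorem \ref{thm:4main}, applied with $k=1$ and the index $i_1$ --- legitimate since $\zeta_{ij}$ and $\tilde\zeta_{ij}$ match to order $4$ and both ensembles satisfy \textbf{C1} with the required $C_0$ --- then gives $|\E\, G_\pm(n\lambda_{i_1}(W))-\E\, G_\pm(n\lambda_{i_1}(\tilde W))|\le n^{-c_0}$, that is, $|\E\, H_\pm(X_n)-\E\, H_\pm(\tilde X_n)|\le n^{-c_0}$. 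Since $\mathbf{1}_{(-\infty,t]}\le H_+\le\mathbf{1}_{(-\infty,t+\delta]}$, this yields $\P(X_n\le t)\le\P(\tilde X_n\le t+\delta)+n^{-c_0}$, and symmetrically (using $H_-$) $\P(X_n\le t)\ge\P(\tilde X_n\le t-\delta)-n^{-c_0}$. Letting $n\rightarrow\infty$ and using the Ben Arous--P\'ech\'e limit for $\tilde X_n$ together with the continuity of $F_{\text{TW}_2}$ --- which absorbs the errors $\delta\rightarrow 0$ and $n^{-c_0}\rightarrow 0$ --- one obtains $\P(X_n\le t)\rightarrow F_{\text{TW}_2}(t)$ for every $t$; as $F_{\text{TW}_2}$ is continuous this is precisely (\ref{eq:TW2}) for $M_{p,n}(\zeta)$.

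The argument introduces no genuinely new difficulty: the two substantive inputs --- the matching theorem and the Ben Arous--P\'ech\'e limit --- are quoted, and the rest is the standard smooth-cutoff bookkeeping. The points needing the most care will be (i) checking that the Ben Arous--P\'ech\'e edge result is available in, or routinely extends to, the rectangular regime $p/n\rightarrow y\in(0,1)$ with exactly the centering and scaling of (\ref{eq:TW2}); and (ii) verifying that the derivative bound (\ref{eq:derivative}) survives the affine rescaling together with the smoothing --- which, as indicated, it does comfortably because $\ell_n$ contracts by a factor of order $n^{1/3}$. Tracking the moment hypotheses through Corollary 30 of \cite{tvrandom} is immediate under \textbf{C1} with $C_0$ large. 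Everything else coincides with the arguments of \cite{tvrandom}, \cite{tvrandom2} and \cite{tvcovariance}, so in a full write-up only the differences need be spelled out.
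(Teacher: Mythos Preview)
Your proposal is correct and follows essentially the same route the paper indicates: match each entry to a Gaussian divisible one via Corollary~30 of \cite{tvrandom} (this is where the ``supported on at least three points'' hypothesis enters), invoke the Ben Arous--P\'ech\'e edge universality for the Gaussian divisible ensemble, and transfer via Theorem~\ref{thm:4main} using the standard smooth-cutoff argument. The paper itself does not spell out a proof beyond the sentence preceding the theorem, so your write-up simply fills in the template of \cite[Theorem~9]{tvrandom} and \cite[Theorem~11]{tvcovariance} as intended.
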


%section
\section{General Tools}
In this section, we collect some basic tools from linear algebra and probability that will be used repeatedly in the sequel. 

%%subsection
\subsection{Tools from linear algebra}
We start with the Cauchy interlacing law and the Weyl inequalities. 

\begin{lem}[Cauchy interlacing law]\cite{tvcovariance}\label{lem:cauchy} Let $1 \leq p \leq n$.
\begin{itemize}
\item[(i)] If $A_n$ is an $n \times n$ Hermitian matrix, and $A_{n-1}$ is an $(n-1) \times (n-1)$ minor, then $\lambda_i(A_n) \leq \lambda_i(A_{n-1}) \leq \lambda_{i+1}(A_n)$ for all $1 \leq i < n$.
\item[(ii)] If $M_{p,n}$ is a $p \times n$ matrix, and $M_{p-1,n}$ is an $(p-1) \times n$ minor, then $\sigma_i(M_{p,n}) \leq \sigma_i( M_{p-1,n} ) \leq \sigma_{i+1}(M_{p,n})$ for all $1 \leq i < p$.
\item[(iii)] If $p<n$, if $M_{p,n}$ is a $p \times n$ matrix, and $M_{p,n-1}$ is a $p \times (n-1)$ minor, then $\sigma_{i-1}(M_{p,n}) \leq \sigma_i(M_{p,n-1}) \leq \sigma_i(M_{p,n})$ for all $1 \leq i \leq p$, with the understanding that $\sigma_0(M_{p,n})=0$.  (For $p=n$, one can of course use the transpose of (ii) instead.)
\end{itemize}
\end{lem}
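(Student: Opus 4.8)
The plan is to deduce all three parts from the classical Courant--Fischer min--max characterization of the (increasingly ordered) eigenvalues of a Hermitian matrix. Recall that for an $n\times n$ Hermitian matrix $A$,
\[
\lambda_i(A)=\min_{\dim V=i}\ \max_{0\neq x\in V}\frac{x^*Ax}{x^*x}=\max_{\dim V=n-i+1}\ \min_{0\neq x\in V}\frac{x^*Ax}{x^*x},
\]
where $V$ ranges over subspaces of $\mathbb{C}^n$ of the indicated dimension. For part (i), I would note that the $(n-1)\times(n-1)$ minor $A_{n-1}$ is the compression of $A_n$ to the coordinate hyperplane $W=\{x\in\mathbb{C}^n:x_k=0\}$, where $k$ is the deleted index, so the Rayleigh quotients of $A_{n-1}$ are exactly those of $A_n$ restricted to the $(n-1)$-dimensional space $W$. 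The bound $\lambda_i(A_n)\le\lambda_i(A_{n-1})$ then follows from the ``$\min$--$\max$'' formulas, since every $i$-dimensional $V\subseteq W$ is an admissible competitor in the definition of $\lambda_i(A_n)$; the bound $\lambda_i(A_{n-1})\le\lambda_{i+1}(A_n)$ follows symmetrically from the ``$\max$--$\min$'' formulas, since every $(n-i)$-dimensional subspace of $W$ is admissible for $\lambda_{i+1}(A_n)$ (note $(n-1)-i+1=n-i=n-(i+1)+1$).

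For part (ii), since $p\le n$ the singular values of $M_{p,n}$ are recovered from the $p\times p$ Gram matrix via $\sigma_i(M_{p,n})^2=\lambda_i(M_{p,n}M_{p,n}^*)$. Deleting a row of $M_{p,n}$ deletes the corresponding row and column of $M_{p,n}M_{p,n}^*$, so $M_{p-1,n}M_{p-1,n}^*$ is a principal $(p-1)\times(p-1)$ minor of $M_{p,n}M_{p,n}^*$. Applying part (i) to this pair of Hermitian matrices and taking square roots gives $\sigma_i(M_{p,n})\le\sigma_i(M_{p-1,n})\le\sigma_{i+1}(M_{p,n})$ for $1\le i<p$.

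For part (iii), I would instead work with the $n\times n$ Gram matrix $A_n:=M_{p,n}^*M_{p,n}$, whose eigenvalues in increasing order consist of $n-p$ copies of $0$ followed by $\sigma_1(M_{p,n})^2\le\cdots\le\sigma_p(M_{p,n})^2$; thus $\lambda_{n-p+\ell}(A_n)=\sigma_\ell(M_{p,n})^2$ for $\ell=1,\dots,p$, and also for $\ell=0$ under the convention $\sigma_0=0$ (valid since $p<n$ forces $n-p\ge1$). Deleting a column of $M_{p,n}$ deletes the corresponding row and column of $A_n$, turning it into the principal minor $A_{n-1}:=M_{p,n-1}^*M_{p,n-1}$, whose eigenvalues are $(n-1)-p$ copies of $0$ followed by $\sigma_1(M_{p,n-1})^2\le\cdots\le\sigma_p(M_{p,n-1})^2$, so $\lambda_{(n-1)-p+i}(A_{n-1})=\sigma_i(M_{p,n-1})^2$. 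Applying part (i) with the index $j=(n-1)-p+i$ (which satisfies $1\le j<n$ for $1\le i\le p$), the chain $\lambda_j(A_n)\le\lambda_j(A_{n-1})\le\lambda_{j+1}(A_n)$ becomes, after translating both outer terms via the identities above, $\sigma_{i-1}(M_{p,n})^2\le\sigma_i(M_{p,n-1})^2\le\sigma_i(M_{p,n})^2$; taking square roots completes the proof.

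The only real obstacle here is bookkeeping: in parts (ii) and (iii) one must track the block of $n-p$ (resp.\ $(n-1)-p$) zero eigenvalues that appears when one passes to the $n\times n$ Gram matrix rather than the $p\times p$ one, and pick the index shift $j=(n-1)-p+i$ so that the Hermitian interlacing of part (i) lands precisely on the singular values with the stated offsets (including the boundary convention $\sigma_0=0$). Everything else is a direct and routine application of Courant--Fischer; indeed, since this is a standard linear-algebra fact, one could alternatively simply cite \cite{tvcovariance}.
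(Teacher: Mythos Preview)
Your argument is correct and is the standard derivation of Cauchy interlacing via Courant--Fischer, including the careful index bookkeeping needed in (iii) to handle the $n-p$ zero eigenvalues of $M_{p,n}^*M_{p,n}$. Note, however, that the paper does not actually prove this lemma: it is stated as a quotation from \cite{tvcovariance} and used as a black box, so your proposal supplies more than the paper itself does (and your closing remark that one could simply cite \cite{tvcovariance} is precisely what the paper opts to do).
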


\begin{lem}[Weyl inequality]\cite{tvcovariance}\label{lem:weyl}  Let $1 \leq p \leq n$.
\begin{itemize}
\item If $A, B$ are $n \times n$ Hermitian matrices, then $\|\lambda_i(A) - \lambda_i(B)| \leq \|A-B\|_{op}$ for all $1 \leq i \leq n$.
\item If $M, N$ are $p \times n$ matrices, then $\|\sigma_i(M) - \sigma_i(N)| \leq \|M-N\|_{op}$ for all $1 \leq i \leq p$.
\end{itemize}
\end{lem}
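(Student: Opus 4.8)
The plan is to deduce both assertions from the Courant--Fischer min-max characterization of eigenvalues of Hermitian matrices, using only the elementary fact that a perturbation of operator norm $r$ shifts every Rayleigh quotient $v^*Av/v^*v$ by at most $r$.

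First I would handle the Hermitian case. With the paper's convention $\lambda_1(A)\le\cdots\le\lambda_n(A)$, Courant--Fischer reads
$$\lambda_i(A)=\min_{\substack{V\subseteq\C^n\\\dim V=i}}\ \max_{\substack{v\in V\\\|v\|=1}}v^*Av\qquad(1\le i\le n).$$
For every unit vector $v$ one has $|v^*Av-v^*Bv|=|v^*(A-B)v|\le\|A-B\|_{op}$, so $v^*Av\le v^*Bv+\|A-B\|_{op}$. Plugging an $i$-dimensional subspace $V$ that attains the minimum for $B$ into the formula for $\lambda_i(A)$ then gives $\lambda_i(A)\le\lambda_i(B)+\|A-B\|_{op}$; interchanging the roles of $A$ and $B$ gives the matching lower bound, hence $|\lambda_i(A)-\lambda_i(B)|\le\|A-B\|_{op}$.

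For the rectangular statement I would reduce to the Hermitian case via the Jordan--Wielandt dilation: to a $p\times n$ matrix $M$ associate the $(p+n)\times(p+n)$ Hermitian matrix
$$\widetilde M=\begin{pmatrix}0&M\\M^*&0\end{pmatrix},$$
whose eigenvalue multiset is $\{\pm\sigma_1(M),\dots,\pm\sigma_p(M)\}$ together with $n-p$ zeros, and which satisfies $\|\widetilde M\|_{op}=\|M\|_{op}$. One checks that, independently of the rank of $M$, the $i$-th smallest singular value satisfies $\sigma_i(M)=\lambda_{n+i}(\widetilde M)$ for $1\le i\le p$. Since $\widetilde M-\widetilde N=\widetilde{M-N}$, applying the Hermitian case just proved to $\widetilde M$ and $\widetilde N$ yields $|\sigma_i(M)-\sigma_i(N)|=|\lambda_{n+i}(\widetilde M)-\lambda_{n+i}(\widetilde N)|\le\|\widetilde M-\widetilde N\|_{op}=\|M-N\|_{op}$. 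Alternatively, one can avoid the dilation and argue directly from the min-max description of $\sigma_i$ through the quantities $\|Mv\|$, using the triangle inequality $|\,\|Mv\|-\|Nv\|\,|\le\|(M-N)v\|\le\|M-N\|_{op}$ in place of the Rayleigh-quotient estimate.

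There is no essential difficulty here — the result is classical — so the only thing that needs attention is bookkeeping: getting the direction of the min-max formula aligned with the increasing ordering used throughout the paper, and, in the dilation argument, verifying that the $n-p$ spurious zero eigenvalues of $\widetilde M$ do not disturb the index matching $\sigma_i(M)=\lambda_{n+i}(\widetilde M)$, so that the Hermitian bound transfers with the same index $i$.
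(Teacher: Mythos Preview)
Your argument is correct; both the Courant--Fischer step for the Hermitian case and the Jordan--Wielandt dilation reduction for the rectangular case go through without issue, and your index check $\sigma_i(M)=\lambda_{n+i}(\widetilde M)$ is valid regardless of rank. Note, however, that the paper does not supply its own proof of this lemma: it is simply quoted as a standard tool from \cite{tvcovariance}, so there is no ``paper's proof'' to compare against --- your write-up would serve perfectly well as the omitted justification.
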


The following formula for an entry of a singular vector, in terms of the singular values and singular vectors of a minor, is very useful:
\begin{lem} [Corollary 25, \cite{tvcovariance}]\label{lem:coordinate}
 Let $p,n \ge 1$, and let 
$$M_{p,n}=
\left(
\begin{array}{cc}
M_{p,n-1} & X
\end{array}
\right)
 $$
be a $p\times n$ matrix for some $X\in \mathbb{C}^p$, and let 
$\left(
\begin{array}{cc}
u\\
x
\end{array}
\right)$ be a right unit singular vector of $M_{p,n}$ with singular value $\sigma_i(M_{p,n})$, where $x\in \mathbb{C}$ and $u \in \mathbb{C}^{n-1}$. Suppose that none of the singular values of $M_{p,n-1}$ are equal to $\sigma_i(M_{p,n})$. Then 
$$|x|^2= \frac{1}{1+\sum_{j=1}^{\text{min}{(p,n-1)}} \frac{\sigma_j(M_{p,n-1})^2}{(\sigma_j(M_{p,n-1})^2 - \sigma_i(M_{p,n})^2)^2} |v_j (M_{p,n-1})^* X|^2},$$
where $v_1(M_{p,n-1}), \ldots, v_{\text{min}{(p,n-1)}}(M_{p,n-1}) \in \mathbb{C}^p$ is an orthonormal system of left singular vectors corresponding to the non-trivial singular values of $M_{p,n-1}.$

In a similar vein, if $$M_{p,n}=
\left(
\begin{array}{cc}
M_{p-1,n}\\
Y^*
\end{array}
\right)$$
for some $Y\in \mathbb{C}^n$, and $\left(
\begin{array}{cc}
v\\
y
\end{array}
\right)$ is a left unit singular vector of $M_{p,n}$ with singular value $\sigma_i(M_{p,n})$, where $y\in \mathbb{C}$ and $v\in \mathbb{C}^{p-1}$, and none of the singular values of $M_{p-1,n}$ are equal to $\sigma_i(M_{p,n})$, then 
$$|y|^2= \frac{1}{1+\sum_{j=1}^{\text{min}{(p-1,n)}} \frac{\sigma_j(M_{p-1,n})^2}{(\sigma_j(M_{p-1,n})^2 - \sigma_i(M_{p,n})^2)^2} |u_j (M_{p-1,n})^* Y|^2},$$
where $u_1(M_{p-1,n}), \ldots, u_{\text{min}{(p-1,n)}}(M_{p-1,n}) \in \mathbb{C}^n$ is an orthonormal system of right singular vectors corresponding to the non-trivial singular values of $M_{p-1,n}.$
\end{lem}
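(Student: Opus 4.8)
The plan is to convert the statement about singular vectors of $M_{p,n}$ into one about eigenvectors of a Gram matrix, and then diagonalise. Write $\sigma:=\sigma_i(M_{p,n})$ and abbreviate $N:=M_{p,n-1}$. Saying that $\binom{u}{x}$ is a right unit singular vector of $M_{p,n}$ with singular value $\sigma$ is the same as saying that it is a unit eigenvector of the $n\times n$ matrix $M_{p,n}^{*}M_{p,n}$ with eigenvalue $\sigma^{2}$. In the block decomposition induced by $M_{p,n}=(N\mid X)$ one has
$$M_{p,n}^{*}M_{p,n}=\begin{pmatrix} N^{*}N & N^{*}X\\ X^{*}N & X^{*}X\end{pmatrix},$$
so the eigenvector equation becomes the pair
$$N^{*}Nu+(N^{*}X)x=\sigma^{2}u,\qquad (X^{*}N)u+(X^{*}X)x=\sigma^{2}x.$$
By hypothesis $\sigma$ is not a singular value of $N$, so $N^{*}N-\sigma^{2}I$ is invertible on $(\ker N)^{\perp}=\operatorname{range}(N^{*})$, which is exactly the subspace that contains $N^{*}X$; since moreover $\sigma>0$ in the regime of interest (the lower edge $a=(1-\sqrt{y})^{2}>0$), so that $u$ is uniquely determined, the first equation gives $u=-x\,(N^{*}N-\sigma^{2}I)^{-1}N^{*}X$.

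Next I would feed this expression for $u$ into the normalisation $\|u\|^{2}+|x|^{2}=1$, which immediately yields
$$|x|^{2}=\frac{1}{1+\big\|(N^{*}N-\sigma^{2}I)^{-1}N^{*}X\big\|^{2}}.$$
It then remains to evaluate the denominator using the singular value decomposition $N=\sum_{j=1}^{\min(p,n-1)}\sigma_{j}(N)\,v_{j}w_{j}^{*}$, with $v_{1},\dots\in\mathbb{C}^{p}$ and $w_{1},\dots\in\mathbb{C}^{n-1}$ orthonormal. Then $N^{*}X=\sum_{j}\sigma_{j}(N)\,(v_{j}^{*}X)\,w_{j}$ lies in $\operatorname{span}\{w_{j}\}$, a subspace on which $N^{*}N-\sigma^{2}I$ acts diagonally with eigenvalues $\sigma_{j}(N)^{2}-\sigma^{2}$; hence
$$(N^{*}N-\sigma^{2}I)^{-1}N^{*}X=\sum_{j}\frac{\sigma_{j}(N)}{\sigma_{j}(N)^{2}-\sigma^{2}}\,(v_{j}^{*}X)\,w_{j},$$
and orthonormality of the $w_{j}$ gives $\big\|(N^{*}N-\sigma^{2}I)^{-1}N^{*}X\big\|^{2}=\sum_{j}\frac{\sigma_{j}(N)^{2}}{(\sigma_{j}(N)^{2}-\sigma^{2})^{2}}\,|v_{j}^{*}X|^{2}$. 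Substituting this into the displayed formula for $|x|^{2}$ produces precisely the asserted identity, with $v_{j}=v_{j}(M_{p,n-1})$.

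For the second identity I would simply apply the first one to the transposed matrix. Indeed $M_{p,n}^{*}=(M_{p-1,n}^{*}\mid Y)$ is an $n\times p$ matrix of exactly the form treated in the first part (last column $Y$), a left unit singular vector $\binom{v}{y}$ of $M_{p,n}$ is a right unit singular vector of $M_{p,n}^{*}$ with the same singular value $\sigma$, the singular values of $M_{p-1,n}^{*}$ coincide with those of $M_{p-1,n}$, and the left singular vectors of $M_{p-1,n}^{*}$ are precisely the right singular vectors $u_{j}(M_{p-1,n})$. Plugging these substitutions into the first identity gives the stated expression for $|y|^{2}$. The only point requiring a little care is the invertibility of $N^{*}N-\sigma^{2}I$ on the relevant subspace together with the fact that $N^{*}X\in(\ker N)^{\perp}$ and $\sigma\neq 0$; once these are in hand, everything else is routine bookkeeping with the SVD, so I do not anticipate a serious obstacle.
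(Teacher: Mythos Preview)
Your proof is correct. The paper itself does not prove this lemma but only cites it as Corollary~25 of \cite{tvcovariance}; there the result is obtained exactly along your lines, namely by passing to the Gram matrix $M_{p,n}^{*}M_{p,n}$ (whose bottom-right block decomposition you wrote down), solving the eigenvector equation for $u$ in terms of $x$, and reading off the norm via the SVD of the minor. This is also the route the present paper uses for the companion identity in Lemma~\ref{lem:cauchyid}. Your caveat that one needs $\sigma>0$ (so that $N^{*}N-\sigma^{2}I$ is genuinely invertible and $u$ is uniquely determined) is the only point not made explicit in the lemma statement, but it is harmless here since the paper works throughout under the assumption $a>0$.
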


The next lemma is the well-known Cauchy interlacing identities:
\begin{lem}[Lemma 40, \cite{tvrandom}] \label{lem:interlacing1}
 Let $A_n$ be a $n\times n$ Hermitian matrix, and 
$$A_n=
\left(
\begin{array}{cc}
A_{n-1} & X\\
X^* & a_{nn}
\end{array}
\right)
$$ Let $\lambda_i(A_n), 1\le i\le n$ be the eigenvalues of $A_n$ and $\lambda_j(A_{n-1}), 1\le j\le n-1$ be the eigenvalues of $A_{n-1}$. Suppose that $X$ is not orthogonal to any of the unit eigenvectors $u_j(A_{n-1})$ of $A_{n-1}$. Then we have 
$$\displaystyle\sum_{j=1}^{n-1} \frac{|u_j(A_{n-1})^* X|^2}{\lambda_j(A_{n-1})-\lambda_i(A_n)}=a_{nn}-\lambda_i(A_n)$$for every $1\le i\le n$.
\end{lem}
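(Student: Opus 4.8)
The plan is to read the identity off the eigenvector equation of $A_n$, exploiting the block structure. Write $\lambda := \lambda_i(A_n)$ and pick a unit eigenvector $\binom{v}{t}$ of $A_n$ with $v\in\C^{n-1}$, $t\in\C$, so that $A_n\binom{v}{t}=\lambda\binom{v}{t}$. Splitting this into its two blocks yields $(A_{n-1}-\lambda I)v=-tX$ and $X^*v+a_{nn}t=\lambda t$. Once one knows that $\lambda\notin\spec(A_{n-1})$ and that $t\neq 0$, one solves the first equation as $v=-t(A_{n-1}-\lambda I)^{-1}X$, substitutes into the second, and divides by $t$ to obtain $X^*(A_{n-1}-\lambda I)^{-1}X=a_{nn}-\lambda$; expanding $X$ in the eigenbasis of $A_{n-1}$ then produces the claimed sum.

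The one place requiring care is the non-degeneracy, namely that $\lambda=\lambda_i(A_n)$ is not an eigenvalue of $A_{n-1}$ (so the resolvent exists and every denominator $\lambda_j(A_{n-1})-\lambda$ is nonzero) and that $t\neq 0$ (so the division is legitimate). This is exactly where the hypothesis that $X$ is not orthogonal to any unit eigenvector of $A_{n-1}$ is used; note in passing that this hypothesis forces $A_{n-1}$ to have simple spectrum, since a repeated eigenvalue would admit a unit eigenvector orthogonal to $X$. Suppose toward a contradiction that $\lambda=\lambda_j(A_{n-1})$ for some $j$, with unit eigenvector $u_j:=u_j(A_{n-1})$. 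Pairing the first block equation with $u_j$ and using that $A_{n-1}$ is Hermitian (and $\lambda$ real) gives $0=\langle u_j,(A_{n-1}-\lambda I)v\rangle=-t\,(u_j^*X)$, so $t=0$ because $u_j^*X\neq 0$. But then the first block equation becomes $(A_{n-1}-\lambda I)v=0$, forcing $v$ to be a nonzero multiple of $u_j$ by simplicity, while the second becomes $X^*v=0$, contradicting $u_j^*X\neq 0$. Hence $\lambda\notin\spec(A_{n-1})$, the matrix $A_{n-1}-\lambda I$ is invertible, and $t\neq 0$ is then automatic, since $t=0$ would give $v=-t(A_{n-1}-\lambda I)^{-1}X=0$, contradicting that $\binom{v}{t}$ is a unit vector.

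With these facts in hand I would finish by diagonalizing: writing $X=\sum_{j=1}^{n-1}(u_j^*X)\,u_j$ in the orthonormal eigenbasis $\{u_j\}$ of $A_{n-1}$, the resolvent $(A_{n-1}-\lambda I)^{-1}$ acts diagonally, so $X^*(A_{n-1}-\lambda I)^{-1}X=\sum_{j=1}^{n-1}\frac{|u_j^*X|^2}{\lambda_j(A_{n-1})-\lambda}$, and combining this with $X^*(A_{n-1}-\lambda I)^{-1}X=a_{nn}-\lambda$ from the previous paragraph gives precisely the stated identity. The computation itself is short and essentially forced; I expect the main (though modest) obstacle to be the non-degeneracy verification above, where the genericity hypothesis on $X$ is used in an essential way — both to keep the denominators nonzero and to guarantee that the eigenvector has nonzero last coordinate.
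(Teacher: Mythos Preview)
Your argument is correct and is the standard route to this identity; the paper does not supply its own proof but simply quotes the lemma from \cite{tvrandom}, so there is nothing to compare against beyond noting that your derivation is the expected one.
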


From this lemma, one immediately gets an interlacing identity for singular values:

\begin{lem} [Interlacing identity for singular values] \label{lem:cauchyid}
Assume the notations in Lemma \ref{lem:coordinate}, then for every $i$,
\begin{equation}\label{eq:lace1}
\displaystyle \sum_{j=1}^{{\text{min}{(p,n-1)}}} \frac{\sigma_j(M_{p,n-1})^2 |v_j (M_{p,n-1})^* X|^2}{\sigma_j(M_{p,n-1})^2 - \sigma_i(M_{p,n})^2} = ||X||^2 - \sigma_i(M_{p,n})^2,
\end{equation}
Similarly, we have 
\begin{equation}\label{eq:lace2}
\displaystyle \sum_{j=1}^{{\text{min}{(p-1,n)}}} \frac{\sigma_j(M_{p-1,n})^2 |u_j (M_{p-1,n})^* Y|^2}{\sigma_j(M_{p-1,n})^2 - \sigma_i(M_{p,n})^2} = ||Y||^2 - \sigma_i(M_{p,n})^2,
\end{equation}
\end{lem}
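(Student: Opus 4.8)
The plan is to deduce both identities from the Hermitian interlacing identity of Lemma \ref{lem:interlacing1}, applied to a Gram matrix formed from $M_{p,n}$. For \eqref{eq:lace1}, set $A_n := M_{p,n}^* M_{p,n}$, an $n\times n$ positive semidefinite Hermitian matrix whose nonzero eigenvalues are exactly the numbers $\sigma_i(M_{p,n})^2$. Writing $M_{p,n}=(M_{p,n-1}\mid X)$ column-wise and multiplying out the block product,
\[
A_n=\begin{pmatrix} M_{p,n-1}^*M_{p,n-1} & M_{p,n-1}^*X \\ X^*M_{p,n-1} & \|X\|^2\end{pmatrix},
\]
so $A_n$ has precisely the bordered form required by Lemma \ref{lem:interlacing1}, with $A_{n-1}=M_{p,n-1}^*M_{p,n-1}$, bordering vector $\widetilde X:=M_{p,n-1}^*X\in\mathbb{C}^{n-1}$, and corner entry $a_{nn}=\|X\|^2$.

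The next step is to translate the data of $A_{n-1}$ into singular-value data of $M_{p,n-1}$. The unit eigenvectors of $A_{n-1}$ are the right singular vectors of $M_{p,n-1}$: if $w_j$ satisfies $A_{n-1}w_j=\sigma_j(M_{p,n-1})^2 w_j$, then $M_{p,n-1}w_j=\sigma_j(M_{p,n-1})\,v_j(M_{p,n-1})$, hence
\[
w_j^*\widetilde X=(M_{p,n-1}w_j)^*X=\sigma_j(M_{p,n-1})\,v_j(M_{p,n-1})^*X,
\]
and in particular every eigenvector of $A_{n-1}$ lying in $\ker M_{p,n-1}$ — these carry the trivial zero eigenvalue present when $n-1>p$ — is orthogonal to $\widetilde X$ and contributes nothing to the interlacing sum. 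Feeding $|w_j^*\widetilde X|^2=\sigma_j(M_{p,n-1})^2|v_j(M_{p,n-1})^*X|^2$ and $a_{nn}=\|X\|^2$ into the identity of Lemma \ref{lem:interlacing1} at the eigenvalue $\sigma_i(M_{p,n})^2$ of $A_n$, and discarding the vanishing terms, gives exactly \eqref{eq:lace1}. The hypothesis inherited from Lemma \ref{lem:coordinate} — that no $\sigma_j(M_{p,n-1})$ equals $\sigma_i(M_{p,n})$ — is what keeps all denominators nonzero (and, in the regime of interest where the relevant singular values are bounded away from $0$, keeps $\sigma_i(M_{p,n})^2$ off the spectrum of $A_{n-1}$, which is all that the derivation of Lemma \ref{lem:interlacing1} really uses; the literal non-orthogonality hypothesis there is not needed, since the vanishing terms may simply be dropped).

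Finally, \eqref{eq:lace2} is the mirror image, obtained by applying the same argument to $B_p:=M_{p,n}M_{p,n}^*$, a $p\times p$ Hermitian matrix with nonzero eigenvalues $\sigma_i(M_{p,n})^2$. With $M_{p,n}=\binom{M_{p-1,n}}{Y^*}$, the block expansion exhibits $B_p$ in bordered form with $B_{p-1}=M_{p-1,n}M_{p-1,n}^*$, bordering vector $M_{p-1,n}Y$, and corner $\|Y\|^2$; the eigenvectors of $B_{p-1}$ are the left singular vectors $\widehat u_j$ of $M_{p-1,n}$, and $\widehat u_j^*(M_{p-1,n}Y)=(M_{p-1,n}^*\widehat u_j)^*Y=\sigma_j(M_{p-1,n})\,u_j(M_{p-1,n})^*Y$, so Lemma \ref{lem:interlacing1} now produces \eqref{eq:lace2}. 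I do not anticipate a genuine obstacle; the only point requiring a little care is the bookkeeping of the trivial zero singular values and the associated kernel directions, which, as observed above, drop out of the sums automatically.
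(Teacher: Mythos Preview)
Your proposal is correct and follows essentially the same approach as the paper: apply Lemma \ref{lem:interlacing1} to the Gram matrix $M_{p,n}^*M_{p,n}$ (respectively $M_{p,n}M_{p,n}^*$), using the block decomposition induced by $M_{p,n}=(M_{p,n-1}\mid X)$ (respectively the row decomposition), and translate via $u_j(M_{p,n-1}^*M_{p,n-1})^*M_{p,n-1}^*X=\sigma_j(M_{p,n-1})\,v_j(M_{p,n-1})^*X$. Your treatment is in fact a bit more careful than the paper's, since you explicitly note that the kernel directions of $M_{p,n-1}$ contribute nothing to the sum and that the non-orthogonality hypothesis of Lemma \ref{lem:interlacing1} is not really needed once those vanishing terms are dropped.
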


\begin{proof} Apply Lemma \ref{lem:interlacing1} to the matrix
\[
M_{p,n}^*M_{p,n}=
\left(
\begin{array}{cc}
M_{p,n-1}^*M_{p,n-1} & M_{p,n-1}^*X\\
X^*M_{p,n-1} & ||X||^2
\end{array}
\right)
\] with eigenvalue $\sigma_i(M_{p.n})^2$. 

Since we have $\lambda_j(M_{p,n-1}^*M_{p,n-1})=\sigma_j(M_{p,n-1})^2$ and $$u_j(M_{p,n-1}^*M_{p,n-1})^*M_{p,n-1}^*=\sigma_j(M_{p,n-1})v_j(M_{p,n-1})^*,$$ \eqref{eq:lace1} follows. Similarly, to show \eqref{eq:lace2}, apply Lemma \ref{lem:cauchyid} to the matrix
$$M_{p,n}M_{p,n}^*=
\left(
\begin{array}{cc}
M_{p-1,n}^*M_{p-1,n} & M_{p-1,n}Y\\
Y^*M_{p-1,n}^* & ||Y||^2
\end{array}
\right)
$$
\end{proof}

The \textit{Stieltjes transform} $s_n(z)$ of a Hermitian matrix $W$ is defined for $z \in \mathbb{C}$ by the formula 
$$s_n(z):=\frac{1}{n} \displaystyle\sum_{i=1}^{n} \frac{1}{\lambda_i(W)-z}.$$ By Schur's complement, it has the following alternate representation:

\begin{lem} [Lemma 39, \cite{tvrandom}]\label{StieTran}
Let $W=(\zeta_{ij})_{1\le i,j\le n}$ be a Hermitian matrix, and let $z$ be a complex number not in the spectrum of $W$. Then we have 
$$s_n(z)=\frac{1}{n} \displaystyle\sum_{k=1}^{n} \frac{1}{\zeta_{kk}-z- a^*_k (W_k -zI)^{-1} a_k }$$
where $W_k$ is the $(n-1) \times (n-1)$ matrix with the $k$th row and column of $W$ removed, and $a_k \in \mathbb{C}^{n-1}$ is the $k^\text{th}$ column of $W$ with the $k$th entry removed.
\end{lem}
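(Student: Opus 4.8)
The plan is to recognize $s_n(z)$ as the normalized trace of the resolvent $(W-zI)^{-1}$ and then to compute its diagonal entries one at a time via the Schur complement.

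First I would note that, by the spectral theorem, $(W-zI)^{-1}$ is a normal matrix with eigenvalues $(\lambda_i(W)-z)^{-1}$, so that
$$s_n(z) = \frac1n\sum_{i=1}^n\frac{1}{\lambda_i(W)-z} = \frac1n\operatorname{tr}\big((W-zI)^{-1}\big) = \frac1n\sum_{k=1}^n\big[(W-zI)^{-1}\big]_{kk},$$
since the trace does not depend on the choice of orthonormal basis. It therefore suffices to prove, for each fixed $k$, that $\big[(W-zI)^{-1}\big]_{kk} = \big(\zeta_{kk}-z-a_k^*(W_k-zI)^{-1}a_k\big)^{-1}$.

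Next I would fix $k$ and conjugate $W-zI$ by the permutation matrix sending index $k$ to the last coordinate; conjugation by a permutation commutes with inversion, so the entry currently at position $(k,k)$ of the inverse becomes the $(n,n)$ entry, while $W-zI$ itself takes the block form
$$\begin{pmatrix} W_k-zI & a_k \\ a_k^* & \zeta_{kk}-z\end{pmatrix},$$
using that $W$ is Hermitian so that the $k$-th row of $W$ (with the $k$-th entry removed) is $a_k^*$. Assuming for the moment that the minor $W_k-zI$ is invertible, the standard formula for the inverse of a $2\times2$ block matrix — equivalently, Cramer's rule together with the determinant identity $\det(W-zI)=\det(W_k-zI)\big(\zeta_{kk}-z-a_k^*(W_k-zI)^{-1}a_k\big)$ — shows that the bottom-right entry of $(W-zI)^{-1}$ equals $\big(\zeta_{kk}-z-a_k^*(W_k-zI)^{-1}a_k\big)^{-1}$, which is exactly what is needed; summing over $k$ then gives the lemma.

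The only point requiring genuine care — and the one I would regard as the main obstacle, mild as it is — is the invertibility hypothesis on $W_k-zI$: for a fixed $z\notin\spec(W)$ the minor $W_k$ may still have $z$ as an eigenvalue. I would handle this by analytic continuation. Both sides of the diagonal identity are rational functions of $z$ whose poles, on the common domain, lie in $\spec(W)$ (left side) and a priori in $\spec(W)\cup\bigcup_k\spec(W_k)$ (right side); they agree on the cofinite set where every $W_k-zI$ is invertible, hence they agree as rational functions throughout $\mathbb{C}\setminus\spec(W)$, so the apparent poles of the right-hand side at eigenvalues of the minors are removable. Everything else is the routine Schur-complement computation.
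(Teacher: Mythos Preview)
Your proof is correct and is exactly the Schur-complement argument the paper has in mind: the lemma is quoted from \cite{tvrandom} without proof, with the remark ``By Schur's complement, it has the following alternate representation'' preceding it. Your treatment of the removable singularities at $\spec(W_k)$ is a nice touch beyond what the paper records.
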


%%subsection
\subsection{Tools from probability theory}
We will rely frequently on the next concentration of measure result for projections of random vectors.
\begin{lem} [Lemma 43,\cite{tvrandom}] \label{lem:projection}
Let $X=(\zeta_1,\ldots,\zeta_n) \in \mathbb{C}^n$ be a random vector whose entries are independent with mean zero, variance 1, and are bounded in magnitude by $K$ almost surely for some $K$, where $K \ge 10(\mathbf{E}|\xi|^4+1). $ Let $H$ be a subspace of dimension $d$ and $\pi_H$ the orthogonal projection onto H. Then 
\[
\textbf{P}(|\parallel \pi_H (X) \parallel -\sqrt{d}| \ge t) \le 10 \exp(-\frac{t^2}{10K^2}).
\]
In particular, one has 
$$\parallel \pi_H(X) \parallel= \sqrt{d}+O(K \log n)$$
with overwhelming probability.
\end{lem}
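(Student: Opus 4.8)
The plan is to establish the concentration inequality via the standard martingale (Azuma--Hoeffding) approach applied to the function $X \mapsto \|\pi_H(X)\|$, which is easily seen to be $1$-Lipschitz in $X$ with respect to the Euclidean norm, and then to upgrade the resulting scalar concentration to the stated form by centering around $\sqrt d$ rather than around the mean. First I would record the Lipschitz bound: since $\pi_H$ is an orthogonal projection, $\bigl|\|\pi_H(X)\| - \|\pi_H(X')\|\bigr| \le \|\pi_H(X-X')\| \le \|X - X'\|$, so changing one coordinate $\zeta_i$ of $X$ (which are bounded by $K$ a.s.) changes $\|\pi_H(X)\|$ by at most $2K$. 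Exposing the coordinates $\zeta_1,\dots,\zeta_n$ one at a time and letting $\mathcal F_i = \sigma(\zeta_1,\dots,\zeta_i)$, the sequence $Y_i := \E[\|\pi_H(X)\| \mid \mathcal F_i]$ is a martingale with bounded differences $|Y_i - Y_{i-1}| \le 2K$ (here one uses independence of the $\zeta_i$ so that conditioning only affects the $i$-th coordinate). By the Azuma--Hoeffding inequality,
\[
\Pr\bigl(\bigl|\,\|\pi_H(X)\| - \E\|\pi_H(X)\|\,\bigr| \ge s\bigr) \le 2\exp\!\left(-\frac{s^2}{8nK^2}\right).
\]
This is not yet quite the claimed bound, and in particular the exponent involves $n$, so the martingale route alone gives a weaker statement; to get the sharper $\exp(-t^2/10K^2)$ one should instead invoke Talagrand's concentration inequality for convex Lipschitz functions of independent bounded random variables (the map $X \mapsto \|\pi_H(X)\|$ is convex and $1$-Lipschitz), which yields concentration of $\|\pi_H(X)\|$ around its median $\M$ at scale $K$ with no $n$ dependence:
\[
\Pr\bigl(\bigl|\,\|\pi_H(X)\| - \M\,\bigr| \ge s\bigr) \le 4\exp\!\left(-\frac{s^2}{16K^2}\right).
\]

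Next I would control the centering constant, replacing the median (or mean) by $\sqrt d$. Here the second-moment computation is clean: $\E\|\pi_H(X)\|^2 = \E\,X^* \pi_H X = \sum_{k,l} (\pi_H)_{kl}\, \E[\overline{\zeta_k}\zeta_l] = \operatorname{tr}(\pi_H) = d$, using that the $\zeta_i$ are independent with mean zero and variance $1$ (so $\E[\overline{\zeta_k}\zeta_l] = \delta_{kl}$). Combining $\E\|\pi_H(X)\|^2 = d$ with the concentration just obtained shows that $\M = \sqrt d + O(K)$, or more precisely that $|\M - \sqrt d|$ is bounded by an absolute multiple of $K$; feeding this back in, absorbing constants, and renaming $s$ as $t$ gives the stated inequality $\Pr(|\,\|\pi_H(X)\| - \sqrt d\,| \ge t) \le 10\exp(-t^2/(10K^2))$ (valid in the regime $t \gtrsim K$, which is the only nontrivial range since the bound is vacuous for small $t$). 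The ``in particular'' statement is then immediate: taking $t = CK\log n$ for a large enough constant $C$ makes the right-hand side $O(n^{-C'})$ for every $C'$, i.e.\ $\|\pi_H(X)\| = \sqrt d + O(K\log n)$ with overwhelming probability.

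The main obstacle is really bookkeeping rather than a genuine difficulty: one must be careful that the concentration functional is centered at the median (for Talagrand) or the mean (for Azuma), whereas the lemma is stated with centering at the deterministic quantity $\sqrt d$, so the argument genuinely needs the two ingredients — a concentration inequality and the exact identity $\E\|\pi_H(X)\|^2 = d$ — to be combined, and the hypothesis $K \ge 10(\E|\xi|^4 + 1)$ (hence $K \gtrsim 1$) is what guarantees $|\M - \sqrt d| = O(K)$ can be absorbed into the constants in the exponent. If one wishes to avoid Talagrand and use only the elementary martingale bound, the price is an extra factor of $n$ in the exponent, which still suffices for the ``overwhelming probability'' conclusion (one then needs $t \gg K\sqrt{n}\log n$), but not for the sharp form as stated; I would therefore cite Talagrand's inequality (or the Hoeffding-type inequality for self-bounding / convex Lipschitz functions) for the clean statement and reference the original source in \cite{tvrandom} for the precise constants.
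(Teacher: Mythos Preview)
The paper does not actually prove this lemma; it is quoted as Lemma~43 from \cite{tvrandom} and used as a black-box tool. Your proof via Talagrand's inequality for convex $1$-Lipschitz functions of bounded independent variables, combined with the second-moment identity $\E\|\pi_H(X)\|^2 = \operatorname{tr}(\pi_H) = d$ to replace the median by $\sqrt d$, is correct and is essentially the argument given in the original source. The bookkeeping you flag works out: the concentration yields $\operatorname{Var}(\|\pi_H(X)\|) = O(K^2)$, hence $(\E\|\pi_H(X)\|)^2 = d - O(K^2)$, and a short case split on whether $d \gtrsim K^2$ or $d \lesssim K^2$ confirms $|M - \sqrt d| = O(K)$, which can then be absorbed into the constants $10$ in the final bound.
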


\begin{lem}[Theorem 44, \cite{tvrandom}]\label{lem:tail}  Let $1 \leq N \leq n$ be integers, and let $A = (a_{ij})_{1 \leq i \leq N; 1 \leq j \leq n}$ be an $N \times n$ complex matrix whose $N$ rows are orthonormal in $\mathbb{C}^n$, and obeying the incompressibility condition
\begin{equation}\label{summer}
 \sup_{1 \leq i \leq N; 1 \leq j \leq n} |a_{ij}| \leq \sigma
\end{equation}
for some $\sigma > 0$.  Let $\zeta_1,\ldots,\zeta_n$ be independent complex random variables with mean zero, variance one, and $\E |\zeta_{i} |^{3} \le C$ for some $C \geq 1$.  For each $1 \leq i \leq N$, let $S_i$ be the complex random variable
$$ S_i := \sum_{j=1}^n a_{ij} \zeta_j$$
and let $\vec S$ be the $\mathbb{C}^N$-valued random variable with coefficients $S_1,\ldots,S_N$.
\begin{itemize}
\item (Upper tail bound on $S_i$)  For $t \geq 1$, we have $\P( |S_i| \geq t ) \ll \exp(-ct^2) + C \sigma$ for some absolute constant $c>0$.
\item (Lower tail bound on $\vec S$)  For any $t \leq \sqrt{N}$, one has $\P( |\vec S| \leq t) \ll  O( t/\sqrt{N} )^{\lfloor N/4\rfloor} + C N^4 t^{-3} \sigma$.
\end{itemize}
The same claim holds if one of the $\zeta_i$ is assumed to have variance $c$ instead of $1$ for some absolute constant $c>0$.
\end{lem}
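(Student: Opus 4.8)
The plan is to prove both tail bounds by a single mechanism: bracket the indicator of the event in question between smooth bump functions, and then run the Lindeberg swapping argument, replacing $\zeta_1,\dots,\zeta_n$ one coordinate at a time by Gaussians, so that the estimate reduces to an explicit computation for a Gaussian model plus an error controlled by third moments. For each $j$ I would let $\zeta_j'$ be the centered Gaussian random variable valued in $\mathbb C\cong\mathbb R^2$ whose covariance matches that of $(\mathrm{Re}\,\zeta_j,\mathrm{Im}\,\zeta_j)$; then $\zeta_j$ and $\zeta_j'$ agree in their first two moments in the real sense, $\E|\zeta_j'|^2=\E|\zeta_j|^2=1$, and $\E|\zeta_j'|^3=O(1)$.

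For the upper tail, fix $i$ and pick $\varphi\colon\mathbb C\to[0,1]$ smooth with $\varphi\equiv1$ on $\{|z|\ge t\}$, $\varphi\equiv0$ on $\{|z|\le t/2\}$, and $|\nabla^k\varphi|\ll t^{-k}$ for $k\le3$, so that $\P(|S_i|\ge t)\le\E\,\varphi(S_i)$ with $S_i=\sum_j a_{ij}\zeta_j$. Swapping $\zeta_j$ for $\zeta_j'$ in turn and Taylor expanding $\varphi$ to second order in the swapped variable, the terms of order $0,1,2$ cancel in expectation because the first two real moments match, and the $j$-th swap costs at most $\ll\|\nabla^3\varphi\|_\infty\bigl(\E|a_{ij}\zeta_j|^3+\E|a_{ij}\zeta_j'|^3\bigr)\ll Ct^{-3}|a_{ij}|^3$. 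Summing over $j$ and using $|a_{ij}|\le\sigma$ together with $\sum_j|a_{ij}|^2=1$ (row $i$ of $A$ is a unit vector), the total swap error is $\ll Ct^{-3}\sigma\le C\sigma$ since $t\ge1$; and the Gaussian term obeys $\E\,\varphi(G_i)\le\P(|G_i|\ge t/2)\ll\exp(-ct^2)$, since $G_i=\sum_j a_{ij}\zeta_j'$ is a centered real Gaussian on $\mathbb R^2$ of covariance trace $\sum_j|a_{ij}|^2=1$, so $|G_i|^2$ is dominated by a sum of two squared Gaussians of variance $\le1$. This gives $\P(|S_i|\ge t)\ll\exp(-ct^2)+C\sigma$.

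For the lower tail I would run the identical scheme on $\vec S=\sum_j a_{\cdot j}\zeta_j\in\mathbb C^N$ (here $a_{\cdot j}$ is the $j$-th column of $A$), bracketing by a smooth $\psi\colon\mathbb C^N\to[0,1]$ with $\mathbf 1_{\{|z|\le t\}}\le\psi\le\mathbf 1_{\{|z|\le 2t\}}$ and $|\nabla^k\psi|\ll t^{-k}$ for $k\le3$, so $\P(|\vec S|\le t)\le\E\,\psi(\vec S)$. Let $\vec G=\sum_j a_{\cdot j}\zeta_j'$, a centered Gaussian in $\mathbb C^N\cong\mathbb R^{2N}$ with real covariance $\Sigma$. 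The key structural input is that $\mathrm{tr}\,\Sigma=\sum_{i,j}|a_{ij}|^2=N$ while $\|\Sigma\|_{op}\le1$, the latter being a direct consequence of the orthonormality of the rows of $A$ (that is, $AA^*=I_N$): for a unit $\tilde w\in\mathbb C^N$, $\mathrm{Var}\bigl(\mathrm{Re}(\tilde w^*\vec G)\bigr)=\sum_j\mathrm{Var}\bigl(\mathrm{Re}((\tilde w^*a_{\cdot j})\zeta_j')\bigr)\le\sum_j|\tilde w^*a_{\cdot j}|^2=\tilde w^*AA^*\tilde w=1$. Hence at least $N/2$ of the eigenvalues of $\Sigma$ exceed $1/4$, so $|\vec G|^2$ stochastically dominates $\tfrac14$ of a chi-square with at least $N/2$ degrees of freedom, and a routine chi-square tail estimate yields $\P(|\vec G|\le 2t)\ll O(t/\sqrt N)^{\lfloor N/4\rfloor}$ for $t\le\sqrt N$. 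The Lindeberg swap contributes, exactly as above, $\ll\|\nabla^3\psi\|_\infty\sum_j\bigl(\E\|a_{\cdot j}\zeta_j\|^3+\E\|a_{\cdot j}\zeta_j'\|^3\bigr)\ll Ct^{-3}\sum_j\|a_{\cdot j}\|_2^3$, and since $\sum_j\|a_{\cdot j}\|_2^2=\mathrm{tr}(A^*A)=N$ and $\|a_{\cdot j}\|_2^2\le N\sigma^2$, we get $\sum_j\|a_{\cdot j}\|_2^3\le N^{3/2}\sigma$, hence swap error $\ll CN^{3/2}\sigma t^{-3}\le CN^4\sigma t^{-3}$. Combining, $\P(|\vec S|\le t)\le\E\,\psi(\vec G)+(\text{swap error})\ll O(t/\sqrt N)^{\lfloor N/4\rfloor}+CN^4t^{-3}\sigma$. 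The closing assertion of the lemma (one $\zeta_i$ having variance $c$ instead of $1$) only perturbs $\mathrm{tr}\,\Sigma$ and the various Gaussian bounds by constant factors and leaves the third-moment estimates intact, so it is immediate.

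The parts I would not write out are the construction of the bump functions with the stated derivative bounds, the bookkeeping in the one-variable-at-a-time replacement, and the chi-square tail computation, all of which are standard. The step I expect to be the main obstacle is the lower-tail Gaussian estimate: one must verify that $\Sigma$ has $\Omega(N)$ eigenvalues of size $\Omega(1)$ even though its individual coordinate blocks may be degenerate (for instance when the $\zeta_j$ are real-valued). This is precisely where orthonormality of the rows of $A$ is used — it pins $\mathrm{tr}\,\Sigma=N$ while forcing $\|\Sigma\|_{op}=O(1)$. Finally, using the crude one-sided bracket $\mathbf 1_{\{|z|\le t\}}\le\psi\le\mathbf 1_{\{|z|\le 2t\}}$ with smoothing scale comparable to $t$, rather than a sharp mollification, is what keeps the swap error in the form $t^{-3}\sigma$; this loses nothing because the Gaussian model does not distinguish $t$ from $2t$ at the relevant scale $(t/\sqrt N)^{\Theta(N)}$.
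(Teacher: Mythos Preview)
The paper does not prove this lemma at all: it is quoted verbatim as Theorem~44 of \cite{tvrandom} and placed in the ``Tools from probability theory'' section with no accompanying argument. So there is no proof in the paper to compare against.

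That said, your proposal is correct and is essentially the argument Tao--Vu give in \cite{tvrandom}: Lindeberg replacement of the $\zeta_j$ by matched Gaussians, with the third-moment swap error controlled via a smooth majorant whose derivatives scale like $t^{-k}$, followed by an explicit Gaussian computation. Your handling of the lower tail --- using $AA^*=I_N$ to pin $\mathrm{tr}\,\Sigma=N$ and $\|\Sigma\|_{op}\le 1$, and then pigeonholing to get $\Omega(N)$ eigenvalues of size $\Omega(1)$ --- is exactly the structural input needed, and your observation that the swap error is actually $O(CN^{3/2}\sigma t^{-3})$ rather than $O(CN^4\sigma t^{-3})$ is a harmless sharpening. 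The only places to be slightly careful are (i) the bound $\|\Sigma\|_{op}\le 1$ in the real $2N$-dimensional model when the $\zeta_j$ are not circularly symmetric (one must check that $\mathrm{Var}\bigl(\mathrm{Re}(c\zeta_j)\bigr)\le |c|^2$ for any $c\in\mathbb C$, which follows from $\E|\zeta_j|^2=1$ and Cauchy--Schwarz on the $2\times 2$ real covariance), and (ii) the eigenvalue count, where $\Sigma$ is $2N\times 2N$ with trace $N$, so one gets $\ge 2N/3$ eigenvalues above $1/4$ rather than $N/2$ --- but either suffices for the $\lfloor N/4\rfloor$ exponent.
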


%section
\section{Main technical lemmas}
Recall in the proofs of the Four Moment Theorem and the Gap Theorem as in \cite{tvcovariance}, \cite{tvrandom} and \cite{tvrandom2}, a crucial input was the \emph{Delocalization Theorem} of Erd\H{o}s, Schlein, and Yau (\cite{ESY1}, \cite{ESY2} and \cite{ESY3}). The material in this section is analogous to Section 3 of \cite{tvcovariance}. We will first extend the concentration of ESD result to the edge of the spectrum and use this concentration theorem to show the delocalization of singular vectors. The proof of the delocalization result in the edge of spectrum is significantly different from that in the bulk of spectrum as in \cite{tvcovariance}. However, similar to \cite{tvrandom2}, the Cauchy interlacing identities for singular values in Theorem \ref{lem:cauchyid} will help us to deal with this problem.

First observe that if $M=(\zeta_{ij})$ obeys \emph{condition} \textbf{C1} for some constant $C_0>0$, then by Markov's inequality and the union bound, one has $|\zeta_{ij}| \le n^{10/C_0}$ for all $i,j$ with probability $1-O(n^{-8})$. By a truncation technique (see \cite{bai2010} for details) and Lemma \ref{lem:weyl}, one may assume that $$|\zeta_{ij}| \le K:=n^{10/C_0}$$ almost surely for all $i,j$.

We will derive the eigenvalue concentration theorem (up to the edge) which is an analogue of Theorem 19 in \cite{tvcovariance}:

\begin{thm}[Concentration up to the edge] \label{thm:concentration}
Suppose that $p/n \rightarrow y$ for some $0<y\le 1$. Assume $a>0$. Let $M=(\zeta_{ij})_{1\le i\le p, 1\le j \le n}$ obey \emph{condition} $\textbf{C1}$ for some $C_0 \ge 2$ and the probability distribution of $\zeta_{ij}$ be continuous. Assume further $|\zeta_{ij}| \le K$ almost surely for some $K=o({n^{1/2} \delta^2 }{\log^{-1} n})$ for all $i,j$, where $0< \delta <1/2$ (which can depend on $n$). Then for any interval $I \subset [a,b]$ of length $|I| \ge \frac{K^2 \log^{4.5} n}{\delta^9 n}$, one has with overwhelming probability(uniformly in $I$) that the number of eigenvalues $N_I$ of $W$ in $I$ obeys the concentration estimate
\[
|N_I -p \displaystyle\int_I {{}\rho}_{MP,y}(x)\,dx| \le {\delta} p |I|.
\]
\end{thm}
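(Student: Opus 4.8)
The plan is to follow the Stieltjes transform method of Erd\H{o}s--Schlein--Yau, adapted to the covariance setting exactly as in Theorem~19 of \cite{tvcovariance}, but now allowing the interval $I$ to reach the edge $b$ (and down to $a>0$). Write $s_n(z)$ for the Stieltjes transform of $W=\frac1n M^*M$ and $s_{MP,y}(z)$ for that of the Marchenko--Pastur law. One first establishes a self-consistent equation for $s_n$ at spectral parameter $z = E + i\eta$ with $E \in [a,b]$ and $\eta$ of size comparable to $\delta^c |I|$: using Schur complement (Lemma~\ref{StieTran}) applied to $W$ (or to $MM^*$, whichever is cleaner near the lower edge), each diagonal term $\zeta_{kk} - z - a_k^*(W_k - z)^{-1} a_k$ concentrates, by the projection and tail estimates of Lemmas~\ref{lem:projection} and~\ref{lem:tail}, around the deterministic quantity dictated by $s_{MP,y}$. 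This yields $s_n(z) = s_{MP,y}(z) + o(\mathrm{Im}\, s_{MP,y}(z))$ with overwhelming probability, uniformly in $E$, provided $\eta \gg K^2 \log^{O(1)} n / (\delta^{O(1)} n)$ times the relevant edge factor.

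The second step is the standard Helffer--Sj\"ostrand / Cauchy-integral argument converting control of $\mathrm{Im}\, s_n(E+i\eta)$ into control of $N_I$: one has $\mathrm{Im}\, s_n(E+i\eta) = \frac1n \sum_j \frac{\eta}{(\lambda_j - E)^2 + \eta^2}$, so an upper bound on this quantity at scale $\eta$ gives an upper bound on the number of eigenvalues in any window of length $\eta$ around $E$, and integrating over $E$ (together with the matching lower bound from comparing to $s_{MP,y}$) pins down $N_I$ up to error $\delta p |I|$. The edge behavior enters through the square-root vanishing of $\rho_{MP,y}$ near $b$: near the edge $\mathrm{Im}\, s_{MP,y}(E+i\eta) \sim \sqrt{\eta}$ rather than $\sim 1$, which is precisely why the hypothesis demands $|I| \ge K^2 \log^{4.5} n/(\delta^9 n)$ — the extra powers of $\delta$ absorb the loss from the weaker lower bound on $\mathrm{Im}\, s_{MP,y}$ there. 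One must also dispose of eigenvalues slightly outside $[a,b]$; here a crude operator-norm bound ($\|W\|_{op} \le b + o(1)$ with overwhelming probability, from the moment assumption and e.g. a net argument, or by citing the known largest-eigenvalue bound) suffices to show no stray eigenvalues contribute.

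The main obstacle is propagating the self-consistent equation down to the optimal scale $\eta \sim \delta^{O(1)}|I|$ uniformly up to the edge. In the bulk this is routine because $\mathrm{Im}\, s_{MP,y}$ is bounded below, giving a stable fixed-point equation; at the edge the fixed-point map becomes degenerate (the derivative of the self-consistent map approaches $1$), so the error amplification must be controlled by hand. The standard fix, which I would import from \cite{tvrandom2} and the ESY edge papers, is a continuity/bootstrap in $\eta$: start at a large scale $\eta \sim 1$ where the equation is trivially stable, and decrease $\eta$ in small multiplicative steps, at each step using the previous (weaker) control on $N_I$ — equivalently on $s_n$ at the larger scale — to re-derive the concentration of the Schur terms at the smaller scale, exploiting that $\mathrm{Im}\, a_k^*(W_k - z)^{-1}a_k \ge c\, \eta\, \mathrm{Im}\, s_n$ provides the needed coercivity. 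The Cauchy interlacing identity for singular values (Lemma~\ref{lem:cauchyid}) is what keeps the minors $W_k$ and the full matrix $W$ comparable through this induction, just as in the bulk argument; the remaining work is bookkeeping of the $\delta$-powers to land on the stated thresholds.
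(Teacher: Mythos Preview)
Your overall architecture—Stieltjes transform, Schur complement, self-consistent equation, then conversion to eigenvalue counts via a Cauchy-type lemma—matches the paper. But you diverge from the paper in how you handle the edge stability, and you make that step considerably harder than it needs to be.

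The paper does \emph{not} run an ESY-style multiplicative bootstrap in $\eta$. It proceeds in three simpler steps. First, a crude upper bound $N_I \ll n|I|$ (Proposition~\ref{prop:ubound}, borrowed from \cite{tvcovariance}) is established independently, valid already at scale $|I| \ge K\log^2 n/n$; this is the only a~priori spectral input needed, and it requires no fixed-point analysis. Second, this crude bound is fed directly into the concentration estimate for the Schur terms $Y_k$ (Proposition~\ref{YProp}) via a dyadic decomposition of the spectrum around $\mathrm{Re}(z)$ together with Lemma~\ref{lem:projection}; the output is $Y_k = \mathbf{E}(Y_k\mid W_k) + o(\delta^2)$ at the target scale $\eta = K^2\log^6 n/(n\delta^8)$ in one shot, with no iteration in $\eta$. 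Third, the resulting approximate quadratic $s(z)(y+z-1+yzs(z)) + 1 = o(\delta^2)$ is solved explicitly, producing a trichotomy of candidate values for $s(z)$; a simple continuity argument starting from $\eta = n^{10}/\delta$ (where both $s$ and $s_{MP,y}$ are $o(\delta)$), plus an algebraic check that the spurious branches stay separated from the correct one by an amount $\gg \delta$, then selects the right branch everywhere in the domain.

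Your proposed bootstrap would work in principle, but it is the harder route: you would have to track error amplification through the degenerate fixed point at each scale, whereas the paper sidesteps this entirely by (i) getting the crude upper bound for free, and (ii) replacing a dynamical stability analysis with a static branch-selection argument. Note also that the paper targets only $|s(z)-s_{MP,y}(z)| = o(\delta)$, not $o(\mathrm{Im}\, s_{MP,y}(z))$; the latter is stronger than needed and harder to get near the edge with these hypotheses. Two minor corrections: no separate operator-norm argument is used to exclude eigenvalues outside $[a,b]$—Proposition~\ref{ESDStie} already handles intervals in $[a-\varepsilon,b+\varepsilon]$; and the Cauchy interlacing \emph{identity} (Lemma~\ref{lem:cauchyid}) plays no role in this proof—it is the interlacing \emph{law} that gives $|s(z) - (1-\tfrac{1}{p})s_k(z)| = O(1/(p\eta))$, which is routine.
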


As a consequence of Theorem \ref{thm:concentration}, one can deduce the following delocalization theorem:

\begin{thm}[Delocalization of singular vectors up to edge]\label{thm:delocalization}
Let the hypothesis be as in Theorem \ref{thm:concentration}, then with overwhelming probability, all the unit left and right singular vectors of $M$ have all coefficients uniformly of size at most ${K^2 {n^{-1/2}\log^{O(1)}n} }$.
\end{thm}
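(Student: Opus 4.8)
The plan is to deduce the delocalization bound from the eigenvalue concentration Theorem \ref{thm:concentration} together with the coordinate formula in Lemma \ref{lem:coordinate}, following the strategy of \cite{tvrandom2}. Fix a unit right singular vector $(u,x)^T$ of $M_{p,n}$ associated with singular value $\sigma_i(M_{p,n})$, where we split off the last coordinate $x \in \mathbb{C}$ and the last column $X \in \mathbb{C}^p$ of $M_{p,n}$, so that $M_{p,n} = (M_{p,n-1}\ X)$; by symmetry of the coordinates (one can permute columns) it suffices to bound $|x|$. Lemma \ref{lem:coordinate} gives
\[
|x|^2 = \frac{1}{1 + \sum_{j} \frac{\sigma_j(M_{p,n-1})^2}{(\sigma_j(M_{p,n-1})^2 - \sigma_i(M_{p,n})^2)^2}\,|v_j(M_{p,n-1})^* X|^2},
\]
so it is enough to produce a lower bound of order $n/K^4\log^{O(1)}n$ for the denominator, with overwhelming probability, uniformly in $i$. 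The left singular vectors are handled identically using the second half of Lemma \ref{lem:coordinate} (removing a row instead of a column).

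The core estimate is a lower bound on the sum $\sum_j \frac{\sigma_j(M_{p,n-1})^2}{(\sigma_j(M_{p,n-1})^2 - \sigma_i(M_{p,n})^2)^2}|v_j(M_{p,n-1})^* X|^2$. First, by Cauchy interlacing (Lemma \ref{lem:cauchy}) the singular values $\sigma_j(M_{p,n-1})^2$ interlace the $\sigma_i(M_{p,n})^2$, and by the concentration theorem applied to $M_{p,n-1}$ (which still obeys condition \textbf{C1} with the same parameters, and $(p-1)/(n-1)\to y$), with overwhelming probability there are $\gg \delta' n |I|$ eigenvalues of $\frac1{n-1}M_{p,n-1}^*M_{p,n-1}$ in any subinterval $I$ of $[a+\eta, b-\eta]$ of length comparable to $1/\sqrt n$ or so — in particular one can find $\Omega(n^{c})$ many indices $j$ for which $|\sigma_j(M_{p,n-1})^2 - \sigma_i(M_{p,n})^2| = O(\log^{O(1)} n)$ while the corresponding $\sigma_j(M_{p,n-1})^2 = \Theta(n)$ (here we use $a>0$, so we stay away from the hard edge). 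For those $j$, the coefficient $\frac{\sigma_j(M_{p,n-1})^2}{(\sigma_j(M_{p,n-1})^2 - \sigma_i(M_{p,n})^2)^2}$ is $\gg n / \log^{O(1)} n$. Restricting the sum to those indices and writing $N$ for their number, the span $H$ of the corresponding $v_j(M_{p,n-1})$ is a fixed (given $M_{p,n-1}$) subspace of dimension $N$ independent of $X$, so $\sum_{j\in H}|v_j^*X|^2 = \|\pi_H X\|^2$; by Lemma \ref{lem:projection} (conditioning on $M_{p,n-1}$, since $X$ is independent of it) this is $\gg N$ with overwhelming probability provided $N \gg K^2\log^2 n$. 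Combining, the denominator is $\gg n N/(N\log^{O(1)}n) = n/\log^{O(1)}n$, whence $|x| \ll n^{-1/2}\log^{O(1)}n$; keeping track of the truncation level $K$ through Lemma \ref{lem:projection} gives the stated $K^2 n^{-1/2}\log^{O(1)}n$.

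Two technical points need care. The uniformity in $i$ and across all $n$ coordinates (and over left/right vectors) is obtained by a union bound, which is harmless since all the events above hold with overwhelming probability, i.e. with failure probability beating any fixed power of $n$. The genuine obstacle — and the place where the edge case differs from the bulk treatment of \cite{tvcovariance} — is guaranteeing that the \emph{interlacing} singular value $\sigma_i(M_{p,n})^2$ actually has $\Omega(n^c)$ eigenvalues of the minor within distance $\log^{O(1)}n$: near the edge the Marchenko–Pastur density vanishes like a square root, so the local eigenvalue spacing degrades and the concentration theorem only controls intervals of length $\gg K^2\log^{4.5}n/(\delta^9 n)$. One must therefore choose $\delta$ and the interval around $\sigma_i(M_{p,n})^2$ carefully — taking an interval slightly longer than $1/\sqrt n$ adjusted by the distance to the edge, and exploiting that we only need a \emph{polynomially small} proportion of eigenvalues, not a constant proportion — and verify that the resulting count $N$ still dominates $K^2\log^2 n$ so that Lemma \ref{lem:projection} applies. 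This is exactly the role played by the Cauchy interlacing identity for singular values (Lemma \ref{lem:cauchyid}), which, as in \cite{tvrandom2}, lets one convert the a priori control on $\|X\|^2 - \sigma_i(M_{p,n})^2$ into the needed lower bound on the weighted sum even in the regime where the naive spectral-gap argument is too weak.
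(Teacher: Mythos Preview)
Your setup via Lemma \ref{lem:coordinate} and the reduction to a lower bound on the denominator sum is correct, and you correctly diagnose that the bulk argument (finding $\gg K^2\log^2 n$ eigenvalues of the minor within distance $O(\log^{O(1)}n)$ of $\sigma_i(M_{p,n})^2$ and applying Lemma \ref{lem:projection}) breaks down at the edge, where an interval of that length contains $o(1)$ eigenvalues. But your last paragraph does not actually supply the replacement argument; it only names Lemma \ref{lem:cauchyid} and asserts it ``lets one convert'' the identity into the needed bound. That is the genuine gap: the interlacing identity by itself gives only a \emph{signed} sum equal to $\|X\|^2/n-\lambda_i$, and nothing in your outline explains how to extract from it a lower bound on the local absolute-value sum.

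The missing mechanism is as follows. First, using the upper bound $|v_j(M_{p,n-1})^*X|\ll K\log n$ and Cauchy--Schwarz, one reduces the quadratic-denominator sum in Lemma \ref{lem:coordinate} to a linear one: it suffices that
\[
\sum_{|j-i|\le T}\frac{1}{n}\,\frac{\sigma_j(M_{p,n-1})^2\,|v_j(M_{p,n-1})^*X|^2}{|\sigma_j(M_{p,n-1})^2-\sigma_i(M_{p,n})^2|}\ \gg\ \log^{-O(1)}n
\]
for some $T\ll K^2\log^{O(1)}n$. The interlacing identity gives the \emph{signed} full sum as $y-\lambda_i+o(1)$. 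The step you omit is to evaluate the \emph{far} part $\sum_{|j-i|>T}$ of this signed sum directly: partition $[a,b]$ into intervals of length $K^2\log^{A}n/n$, apply Theorem \ref{thm:concentration} and Lemma \ref{lem:projection} on each, and recognize the result as the principal value integral $y\bigl(1+\lambda_i\,\text{p.v.}\!\int_a^b\rho_{MP,y}(x)/(x-\lambda_i)\,dx\bigr)+o(1)$. Evaluated via the explicit Stieltjes transform of $\rho_{MP,y}$ at $\lambda_i\approx a$ (resp.\ $b$), this is $\sqrt{y}+o(1)$ (resp.\ $-\sqrt{y}+o(1)$). Since the full sum is $2\sqrt{y}-1+o(1)$ (resp.\ $-2\sqrt{y}-1+o(1)$), the near signed sum equals $\sqrt{y}-1+o(1)$ (resp.\ $-\sqrt{y}-1+o(1)$), a nonzero constant for $0<y<1$ (resp.\ $0<y\le 1$); the triangle inequality then yields the desired lower bound with absolute values. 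This explicit principal-value computation, and the algebraic fact that its value differs from $y-\lambda_i$ at both edges, is the heart of the edge proof and is absent from your proposal.
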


\begin{rem} The continuity hypothesis in the above theorems, which guarantees the singular values are almost surely simple,  is only a technical one.  In practice we are able to eliminate this hypothesis by a limiting argument using Lemma \ref{lem:weyl}.
\end{rem}

%%subsection
\subsection{Proof of Theorem \ref{thm:delocalization}:}

%\textbf{I. Delocalization of right singular vector $u_i(M_{p,n})\in \mathbb{C}^n$:}

Let $u_1(M_{p,n}),\ldots,u_p(M_{p,n}) \in \mathbb{C}^n$ be the right singular vectors of $M_{p,n}$. By the union bound and symmetry, it suffices to show that $$|u_i(M_{p,n})^*e_1| \le K^2 n^{-1/2}\log^{O(1)}n $$ with overwhelming probability. The delocalization of left singular vectors can be proved similarly.

The ``bulk" case is treated in \cite{tvcovariance}.  Now we consider the edge case when $1\le i \le 0.001n$ or $0.999n\le i\le n$ (say). Using the Marchenko-Pastur law, we have with overwhelming probability that 
$$
|\lambda_i(W_{n,p})-a| \le o(1) \qquad \text{or} \qquad  |\lambda_i(W_{n,p})-b| \le o(1).
$$
By Lemma \ref{lem:coordinate}, it suffices to show with overwhelming probability that
\[
\displaystyle \sum_{j=1}^{\text{min}{(p,n-1)}} \frac{\sigma_j(M_{p,n-1})^2}{(\sigma_j(M_{p,n-1})^2 - \sigma_i(M_{p,n})^2)^2} |v_j (M_{p,n-1})^* X|^2 \gg n K^{-4}\log^{-O(1)}n.
\]
From Lemma \ref{lem:projection}, we conclude that $|v_j(M_{p,n-1})^*X| \ll K\log n$ with overwhelming probability for each $j$ (and hence for all $j$, by the union bound). Then it is enough to show that with overwhelming probability
$$\displaystyle \sum_{j=1}^{\text{min}{(p,n-1)}} \frac{\sigma_j(M_{p,n-1})^2}{(\sigma_j(M_{p,n-1})^2 - \sigma_i(M_{p,n})^2)^2} |v_j (M_{p,n-1})^* X|^4 \gg  n K^{-2}\log^{-O(1)}n.$$ 
By the Cauchy-Schwarz inequality, it thus suffices to show that 
$$\displaystyle \sum_{T_- \le j \le T_+}^{} \frac{\sigma_j(M_{p,n-1})}{|\sigma_j(M_{p,n-1})^2 - \sigma_i(M_{p,n})^2|} |v_j (M_{p,n-1})^* X|^2 \gg \sqrt n \log^{-O(1)}n$$
with overwhelming probability for some $1\le T_- < T_+ \ll K^2\log^{O(1)}n$. Noticed that $\sigma_j(M_{p,n-1})^2 = \lambda_j(M_{p,n-1}^*M_{p,n-1})=\Theta(n)$, we thus need to show 
\begin{equation}
\displaystyle \sum_{T_- \le j \le T_+}^{} \frac{\sigma_j(M_{p,n-1})^2}{|\sigma_j(M_{p,n-1})^2 - \sigma_i(M_{p,n})^2|} |v_j (M_{p,n-1})^* X|^2 \gg n \log^{-O(1)}n
\end{equation}
with overwhelming probability for some $1\le T_- < T_+ \ll K^2\log^{O(1)}n$, which is equivalent to prove that 
\begin{equation} \label{eq:01}
\displaystyle \sum_{T_- \le j \le T_+}^{} \frac{1}{n} \frac{\sigma_j(M_{p,n-1})^2 |v_j (M_{p,n-1})^* X|^2}{|\sigma_j(M_{p,n-1})^2 - \sigma_i(M_{p,n})^2|}  \gg  \log^{-O(1)}n
\end{equation}
with overwhelming probability for some $1\le T_- < T_+ \ll K^2\log^{O(1)}n$.

\medskip
In the interlacing identity in Lemma \ref{lem:cauchyid}, we have 
\begin{equation}
\displaystyle \sum_{j=1}^{{\text{min}{(p,n-1)}}} \frac{1}{n}\frac{\sigma_j(M_{p,n-1})^2 |v_j (M_{p,n-1})^* X|^2}{\sigma_j(M_{p,n-1})^2 - \sigma_i(M_{p,n})^2} = \frac{1}{n}||X||^2 -\lambda_i(W_{p,n}).
\end{equation}
By Lemma \ref{lem:projection}, one gets $\frac{1}{p}||X||^2 = 1+o(1)$ with overwhelming probability.  And since $p/n=y+o(1)$, one has 
\begin{equation} \label{eq:02}
\displaystyle \sum_{j=1}^{{\text{min}{(p,n-1)}}} \frac{1}{n}\frac{\sigma_j(M_{p,n-1})^2 |v_j (M_{p,n-1})^* X|^2}{\sigma_j(M_{p,n-1})^2 - \sigma_i(M_{p,n})^2 }=y+o(1)-\lambda_i(W_{p,n})
\end{equation}
with overwhelming probability. In order to show (\ref{eq:01}),we will evaluate
\begin{equation}
\begin{split}
&\sum_{j>i+T_+ \ \text{or} \ j< i-T_-}^{} \frac{1}{n}\frac{\sigma_j(M_{p,n-1})^2 |v_j (M_{p,n-1})^* X|^2}{\sigma_j(M_{p,n-1})^2 - \sigma_i(M_{p,n})^2}\\
&\quad =\sum_{j>i+T_+ \ \text{or} \ j< i-T_-}^{} \frac{1}{n} \frac{|v_j (M_{p,n-1})^* X|^2\lambda_j(W_{p,n-1})}{\lambda_j(W_{p,n-1}) - \lambda_i(W_{p,n})}
\end{split}
\end{equation}
for some $T_-, T_+=K^2\log^{O(1)} n$, where $W_{p,n-1}=\frac{1}{n-1} M_{p,n-1}^* M_{p,n-1}$.

The Machenko-Pastur law implies $\lambda_j(W_{p,n-1})=\Theta(1)$ for every $1\le j\le \text{min}(p,n-1)$.

Let $A>100$ be a large constant to be chosen later. From Theorem \ref{thm:concentration}, we have that (by taking $\delta=\log^{-A/20}n$) 
\begin{equation} \label{eq:03}
N_I=p\alpha_I |I| + O(|I| p \log^{-A/20}n)
\end{equation}
with overwhelming probability for any interval $I$ of length $|I|=K^2\log^A n /n$, where $\alpha_I :=\frac{1}{|I|} \int_{I} \rho_{MP,y}(x) \ dx$. For such an interval, we see from Lemma \ref{lem:projection} that with overwhelming probability
$$
\displaystyle \sum_{j: \lambda_j(W_{p,n-1})\in I}^{} \frac{1}{n} |v_j (M_{p,n-1})^* X|^2 = \frac{N_I}{n} +O({\frac{K^2\log^{A/2} n }{n}})
$$
and thus by (\ref{eq:03}) (for A large enough), 
\[
\displaystyle \sum_{j: \lambda_j(W_{p,n-1})\in I}^{} \frac{1}{n} |v_j (M_{p,n-1})^* X|^2 = y\alpha_I |I| +O(|I|\log^{-A/20}n).
\]
Set $d_I := \frac{\text{dist} (\lambda_i(W_{p,n}), I)}{|I|}$. If $d_I \ge \log n$(say), then
\begin{equation*}
 \frac{\lambda_j(W_{p,n-1})}{\lambda_j(W_{p,n-1})-\lambda_i(W_{p,n})}
%=  1+ \frac{\lambda_i(W_{p,n})}{\lambda_j(W_{p,n-1})-\lambda_i(W_{p,n})}
=  1+ \frac{\lambda_i(W_{p,n})}{d_I |I|} +O(\frac{\lambda_i(W_{p,n})}{d_I ^2 |I|})
\end{equation*}
for all $j$ in the above sum, and since $\lambda_i(W_{p,n})=\Theta(1)$, we get
\begin{equation} \label{eq:04}
\begin{split}
\displaystyle \sum_{j: \lambda_j(W_{p,n-1})\in I}^{} \frac{1}{n} \frac{|v_j (M_{p,n-1})^* X|^2\lambda_j(W_{p,n-1})}{\lambda_j(W_{p,n-1}) - \lambda_i(W_{p,n})} 
&= y\alpha_I |I| \left(1+\frac{\lambda_i(W_{p,n})}{d_I |I|} \right)+O(\frac{\alpha_I}{d_I ^2})\\
&+ O(|I| \log^{-A/20} n)+O(\frac{\log^{-A/20}n}{d_I}) .
\end{split}
\end{equation}
We now partition the real line into intervals $I$ of length $K^2\log^{A} n /n$, and sum (\ref{eq:04}) over all intervals $I$ with $d_I \ge \log n$. Bounding $\alpha_I$ crudely by $O(1)$, we see that $\sum_{I} O(\frac{\alpha_I}{d_I ^2}) =O(\frac{1}{\log n})=o(1)$. Similarly, one has
$$
\displaystyle\sum_{I}O(|I| \log^{-A/20} n)=O(\log^{-A/20}n)=o(1)
$$
and
$$
\displaystyle\sum_{I}O(\frac{\log^{-A/20}n}{d_I}) = O(\log^{-A/20}n \log n)=o(1).
$$
 Finally, Riemann integration of the principal value integral 
$$\text{p.v.} \displaystyle\int_{a}^b \frac{\rho_{MP,y}(x)}{x-\lambda_{i}(W_{p,n})} \ dx := \lim_{\varepsilon \rightarrow 0} \int_{a \le x \le b: |x-\lambda_i(W_{p,n})|>\varepsilon} \frac{\rho_{MP,y}(x)}{x-\lambda_{i}(W_{p,n})} \ dx$$
shows that 
$$\sum_I y\alpha_I |I| \left(1+\frac{\lambda_i(W_{p,n})}{d_I |I|} \right) = \text{p.v.} \displaystyle\int_{a}^b y\frac{x\rho_{MP,y}(x)}{x-\lambda_{i}(W_{p,n})} \ dx +o(1).$$

If $|\lambda_i(W_{p,n})- a|\le o(1)$, using the formula for the Stieltjes transform, one obtains from residue calculus that 
\begin{eqnarray*}
\text{p.v.} \displaystyle\int_{a}^b y\frac{x\rho_{MP,y}(x)}{x-\lambda_{i}(W_{p,n})} \ dx 
&=& y \left(1+\text{p.v.} \lambda_i(W_{p,n})\displaystyle\int_{a}^b \frac{\rho_{MP,y}(x)}{x-\lambda_{i}(W_{p,n})} \ dx \right)\\
&=&y\left( 1+ (1-\sqrt y)^2 \frac{1}{\sqrt{y}-y}\right)+o(1)\\
&=&\sqrt y + o(1)
\end{eqnarray*}
thus 
\begin{equation}\label{eq:05}
\sum_{j>i+T_+ \ \text{or} \ j< i-T_-}^{} \frac{1}{n}\frac{\sigma_j(M_{p,n-1})^2 |v_j (M_{p,n-1})^* X|^2}{\sigma_j(M_{p,n-1})^2 - \sigma_i(M_{p,n})^2 }= \sqrt y+o(1)
\end{equation}
and in (\ref{eq:02}), 
\begin{equation}\label{eq:06}
\displaystyle \sum_{j=1}^{{\text{min}{(p,n-1)}}} \frac{1}{n}\frac{\sigma_j(M_{p,n-1})^2 |v_j (M_{p,n-1})^* X|^2}{\sigma_j(M_{p,n-1})^2 - \sigma_i(M_{p,n})^2 }=y+o(1)-\lambda_i(W_{p,n})=2\sqrt y -1+o(1)
\end{equation}
When $0<y<1$,  $\sqrt y > 2\sqrt y -1$. (\ref{eq:01}) follows by comparing (\ref{eq:05}) and (\ref{eq:06}).

\medskip
If $|\lambda_i(W_{p,n})- b|\le o(1)$, we have 
\begin{eqnarray*}
\text{p.v.} \displaystyle\int_{a}^b y\frac{x\rho_{MP,y}(x)}{x-\lambda_{i}(W_{p,n})} \ dx 
&=& y \left(1+\text{p.v.} \lambda_i(W_{p,n})\displaystyle\int_{a}^b \frac{\rho_{MP,y}(x)}{x-\lambda_{i}(W_{p,n})} \ dx \right)\\
&=&y\left( 1- (1+\sqrt y)^2 \frac{1}{\sqrt{y}+y}\right)+o(1)\\
&=&-\sqrt y +o(1)
\end{eqnarray*}
thus 
\begin{equation}\label{eq:07}
\sum_{j>i+T_+ \ \text{or} \ j< i-T_-}^{} \frac{1}{n}\frac{\sigma_j(M_{p,n-1})^2 |v_j (M_{p,n-1})^* X|^2}{\sigma_j(M_{p,n-1})^2 - \sigma_i(M_{p,n})^2 }= -\sqrt y+o(1)
\end{equation}
and in (\ref{eq:02}), 
\begin{equation}\label{eq:08}
\displaystyle \sum_{j=1}^{{\text{min}{(p,n-1)}}} \frac{1}{n}\frac{\sigma_j(M_{p,n-1})^2 |v_j (M_{p,n-1})^* X|^2}{\sigma_j(M_{p,n-1})^2 - \sigma_i(M_{p,n})^2 }=y+o(1)-\lambda_i(W_{p,n})=-2\sqrt y -1+o(1)
\end{equation}
When $0<y\le 1$,  $-\sqrt y > -2\sqrt y -1$. Then (\ref{eq:01}) follows by comparing (\ref{eq:07}) and (\ref{eq:08}).

By the concentration theorem \ref{thm:concentration} and the Cauchy interlacing law, the interval $I$ with $d_I < \log n$ will contribute at most $K^2 \log^{O(1)} n$ eigenvalues and we can set $T_-, T_+$ accordingly. The proof is now complete.

\subsection{Proof of Theorem \ref{thm:concentration}:}

We first have  a crude upper bound on the number of eigenvalues of $W$ on an interval. The proof can be found in Section 5.2, \cite{tvcovariance}.

\begin{prop} \label{prop:ubound}
(Upper bound on ESD) Assume the hypotheses in Theorem \ref{thm:concentration}, then for any interval $I \subset \mathbb{R}$ with length $|I| \ge K\log^2 n/n$, one has $$N_{I} \ll n|I|$$ with overwhelming probability, where $N_{I}$ is the number of eigenvalues in the interval $I$.
\end{prop}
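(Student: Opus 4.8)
The plan is the standard resolvent / self-consistent argument, structurally identical to \cite{tvcovariance}, \S5.2. In the notation of this paper $\lambda_1(W),\dots,\lambda_p(W)$ are the $p$ non-trivial eigenvalues, so $N_I$ equals the number of eigenvalues of the $p\times p$ matrix $W':=\tfrac1n MM^{*}$ in $I$; by standard smallest and largest singular value bounds (this is where $a>0$, i.e.\ $y<1$, enters) the spectrum of $W'$ lies in a fixed compact subset of $(0,\infty)$ with overwhelming probability, so we may assume $I\subset[a',b']$ for fixed constants $0<a'<b'$ and that the midpoint $E$ of $I$ also lies in $[a',b']$ (for $E$ outside this range $N_I=0$ with overwhelming probability). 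Put $\eta:=|I|$, $z:=E+i\eta$, and let $\bar s(z):=\tfrac1p\sum_{i=1}^p(\lambda_i(W)-z)^{-1}$ be the Stieltjes transform of $W'$. Since
\[
\text{Im}\,\bar s(z)=\frac1p\sum_{i=1}^p\frac{\eta}{(\lambda_i(W)-E)^2+\eta^2}\ \ge\ \frac1p\sum_{\lambda_i(W)\in I}\frac{\eta}{(|I|/2)^2+\eta^2}\ \gg\ \frac{N_I}{p\,|I|},
\]
it suffices to show $\text{Im}\,\bar s(z)=O(1)$ with overwhelming probability, uniformly in $E$; the uniformity follows from a polynomial-size net in $E$ together with the Lipschitz bound $|\bar s(E+i\eta)-\bar s(E'+i\eta)|\le|E-E'|\,\eta^{-2}$.

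To control $\bar s(z)$ I would invoke the Schur complement. Deleting the $k$-th row of $M$ produces the $(p-1)\times n$ matrix $M_{[k]}$, and deleting the $k$-th row and column of $W'$ produces $W'_k=\tfrac1n M_{[k]}M_{[k]}^{*}$, whose $p-1$ eigenvalues are almost surely positive and, by Lemma \ref{lem:cauchy}, interlace those of $W'$. Lemma \ref{StieTran} then gives $\bar s(z)=\tfrac1p\sum_{k=1}^p G'_{kk}(z)$ with $G'_{kk}(z)=\bigl(\tfrac1n\|R_k\|^2-z-a_k^{*}(W'_k-z)^{-1}a_k\bigr)^{-1}$, where $R_k$ is the $k$-th row of $M$ and $a_k=\tfrac1n M_{[k]}\overline{R_k}$. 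The imaginary part of this denominator equals $-\eta-b_k$, where $b_k:=\text{Im}\bigl(a_k^{*}(W'_k-z)^{-1}a_k\bigr)=\eta\,\|(W'_k-z)^{-1}a_k\|^2\ge0$; hence $|G'_{kk}(z)|\le(\eta+b_k)^{-1}$ and $\text{Im}\,G'_{kk}(z)=(\eta+b_k)\,|G'_{kk}(z)|^2\le b_k^{-1}$.

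The crux is the lower bound: with overwhelming probability $b_k\gg\text{Im}\,\bar s(z)$ for every $k$, as soon as $\text{Im}\,\bar s(z)$ exceeds a suitable absolute constant $C_1$. Conditioning on $M_{[k]}$, the entries of $\overline{R_k}$ are independent of $M_{[k]}$ with mean $0$, variance $1$ and magnitude at most $K$, so
\[
\E\bigl[b_k\mid M_{[k]}\bigr]=\frac\eta n\operatorname{tr}\bigl((W'_k-z)^{-1}(W'_k-\bar z)^{-1}W'_k\bigr)=\frac\eta n\sum_{j=1}^{p-1}\frac{\lambda_j(W'_k)}{|\lambda_j(W'_k)-z|^2}.
\]
Writing $\lambda_j(W'_k)=E+(\lambda_j(W'_k)-E)$ and using $|\lambda-E|\,|\lambda-z|^{-2}\le(2\eta)^{-1}$, the right-hand side is $\tfrac{E\eta}{n}\sum_{j}|\lambda_j(W'_k)-z|^{-2}+O(1)$; since $E=\Theta(1)$, and a standard rank-two resolvent perturbation (together with $p\eta=p|I|\gg1$) gives $\sum_{j\le p-1}|\lambda_j(W'_k)-z|^{-2}\gg\tfrac p\eta\,\text{Im}\,\bar s(z)$ once $\text{Im}\,\bar s(z)>C_1$, we obtain $\E[b_k\mid M_{[k]}]\gg\text{Im}\,\bar s(z)$. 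A concentration estimate for the quadratic form $b_k$ about its conditional mean — valid in the stated range $|I|\ge K\log^2n/n$, using $\|(W'_k-z)^{-1}\|_{op}\le\eta^{-1}$ and $\|M_{[k]}\|_{op}^2=O(n)$ with overwhelming probability — then upgrades this to $b_k\gg\text{Im}\,\bar s(z)$ for all $k$ at once. Plugging this into the Schur expansion,
\[
\text{Im}\,\bar s(z)=\frac1p\sum_{k=1}^p\text{Im}\,G'_{kk}(z)\ \le\ \frac1p\sum_{k=1}^p\frac1{b_k}\ \ll\ \frac1{\text{Im}\,\bar s(z)},
\]
so $\text{Im}\,\bar s(z)\le C_2$ for an absolute constant $C_2$; choosing $C_1>C_2$ contradicts $\text{Im}\,\bar s(z)>C_1$, and therefore $\text{Im}\,\bar s(z)=O(1)$ with overwhelming probability, as required.

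The only genuinely delicate step is the concentration of $b_k$ at the small scale $\eta\asymp K\log^2n/n$: this is exactly where the polylogarithmic slack in $|I|$ and the moment/size conditions on the $\zeta_{ij}$ are consumed, and it is carried out just as in \cite{tvcovariance}, \S5.2. The passage to the edge introduces no new difficulty there, since at an edge point $E$ near $a$ or $b$ one still has $E=\Theta(1)$, so the estimate $\E[b_k\mid M_{[k]}]\gg\text{Im}\,\bar s(z)$ is unaffected; everything else is routine bookkeeping.
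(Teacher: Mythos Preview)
Your sketch is correct and is precisely the argument of \cite{tvcovariance}, \S5.2, to which the paper simply defers for this proposition; the bootstrap $\text{Im}\,\bar s(z)\ll 1/\text{Im}\,\bar s(z)$ via the Schur complement, together with the interlacing comparison of $\bar s_k$ and $\bar s$ and the quadratic-form concentration for $b_k$, is exactly that proof. The only cosmetic point is that removing one row of $M$ makes $W'_k$ a principal minor of $W'$, so ordinary Cauchy interlacing (rather than a rank-two perturbation) already gives the comparison you need.
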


The strategy is to compare the \textit{Stieltjes transform} of the ESD of matrix $W$
$$s(z):=\frac{1}{p} \displaystyle\sum_{i=1}^{p} \frac{1}{\lambda_i(W)-z},$$ 
%whose imaginary part $$\text{Im} s(x+\sqrt{-1} \eta)=\frac{1}{p} \displaystyle\sum_{i=1}^{p} \frac{\eta}{\eta^2 + (\lambda_i(W)-x)^2}>0$$in the upper half-plane $\eta >0$.
with the \textit{Stieltjes transform} of \textit{Marchenko-Pastur~Law} 
$$s_{MP,y}(z):= \displaystyle\int_{\mathcal{R}} \frac{1}{x-z} \rho_{MP,y}(x)\,dx=\displaystyle\int_{a}^{b} \frac{1}{2\pi xy(x-z)} \sqrt{(b-x)(x-a)}\,dx.$$ 
And thanks to the next proposition, one gets control on ESD  through control on the \textit{Stieltjes transforms}.

\begin{prop}
\emph{(Lemma 29, \cite{tvcovariance})}  
\label{ESDStie}
Let $1/10 \ge \eta \ge 1/n$, and $a, b, \varepsilon, \delta >0$. Suppose that one has the bound $$|s(z)-s_{MP,y}(z)| \le \delta$$ with (uniformly) overwhelming probability for all $z$ with $ a \le \text{Re}(z) \le b$ and $\text{Im}(z) \ge \eta$. Then for any interval $I$ in $[a-\varepsilon, b+\varepsilon]$ with $|I| \ge \text{max}(2\eta, \frac{\eta}{\delta} \log \frac{1}{\delta})$, one has $$|N_I- n \displaystyle\int_I {\rho}_{sc}(x)\,dx| \le \delta n |I|$$ with overwhelming probability. 
\end{prop}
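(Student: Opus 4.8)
The plan is to pass from the Stieltjes-transform hypothesis to control of the counting function $N_I$ via the elementary fact that $\frac1\pi\text{Im}\,s(E+i\eta)$ is the empirical spectral measure of $W$ convolved with the Cauchy (Poisson) kernel $P_\eta(x):=\frac1\pi\frac{\eta}{x^2+\eta^2}$, so that integrating it over an interval recovers the counting function up to a boundary layer of width $O(\eta)$. Fix an interval $I=[E_1,E_2]$ in $[a-\varepsilon,b+\varepsilon]$ and set $\ell:=|I|$; we may assume $I\subseteq[a,b]$ (the case used in Theorem \ref{thm:concentration}), since $\rho_{MP,y}$ is supported in $[a,b]$ and, with overwhelming probability, no eigenvalue of $W$ lies more than $o(1)$ outside $[a,b]$. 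Integrating $\text{Im}\,s(E+i\eta)=\frac1p\sum_{j=1}^p\frac{\eta}{(\lambda_j(W)-E)^2+\eta^2}$ over $E\in I$ and changing variables gives
\[
\int_{E_1}^{E_2}\text{Im}\,s(E+i\eta)\,dE=\frac{\pi}{p}\sum_{j=1}^p\Phi(\lambda_j(W)),\qquad
\Phi(x):=\frac1\pi\left(\arctan\frac{E_2-x}{\eta}-\arctan\frac{E_1-x}{\eta}\right),
\]
and the identical computation applied to $\rho_{MP,y}$ gives $\int_{E_1}^{E_2}\text{Im}\,s_{MP,y}(E+i\eta)\,dE=\pi\int_{\mathbb R}\rho_{MP,y}(x)\,\Phi(x)\,dx$. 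The function $\Phi$ is a smoothed indicator of $I$ at scale $\eta$: $0\le\Phi\le1$, with $1-\Phi(x)=O\bigl(\eta/\text{dist}(x,\partial I)\bigr)$ for $x\in I$, and $\Phi(x)=O\bigl(\min(\eta/\text{dist}(x,I),\,\ell\eta/\text{dist}(x,I)^2)\bigr)$ for $x\notin I$.

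By hypothesis $|\text{Im}(s(z)-s_{MP,y}(z))|\le|s(z)-s_{MP,y}(z)|\le\delta$ on the strip $\{a\le\text{Re}\,z\le b,\ \text{Im}\,z\ge\eta\}$ with overwhelming probability; integrating over $z=E+i\eta$, $E\in I$, the left-hand sides of the two identities above differ by at most $\delta\ell$, so that
\[
\left|\frac1p\sum_{j=1}^p\Phi(\lambda_j(W))-\int_{\mathbb R}\rho_{MP,y}(x)\,\Phi(x)\,dx\right|\le\frac{\delta\ell}{\pi}
\]
with overwhelming probability. It therefore remains only to replace $\Phi$ by $\mathbf 1_I$ on both sides at cost $O(\delta\ell)$, i.e. to bound $\int_{\mathbb R}\rho_{MP,y}\,|\Phi-\mathbf 1_I|$ and $\frac1p\sum_j|\Phi(\lambda_j(W))-\mathbf 1_I(\lambda_j(W))|$; once this is done $\frac{N_I}{p}=\int_I\rho_{MP,y}+O(\delta\ell)$, which is the assertion (the ``$n\int_I\rho_{sc}$'' written in the statement is a transcription of the Wigner analogue and should read ``$p\int_I\rho_{MP,y}$'', matching Theorem \ref{thm:concentration}; the absolute constant is harmless and can be absorbed by replacing $\delta$ with $\delta/C$).

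For the boundary layer: on the deterministic side the assumption $a>0$ makes $\rho_{MP,y}$ bounded on $\mathbb R$, so $\int_{\mathbb R}\rho_{MP,y}\,|\Phi-\mathbf 1_I|\le\|\rho_{MP,y}\|_\infty\int_{\mathbb R}|\Phi-\mathbf 1_I|$, and integrating the pointwise bounds on $\Phi$ dyadically in $\text{dist}(\cdot,\partial I)$ gives $\int_{\mathbb R}|\Phi-\mathbf 1_I|=O(\eta\log(\ell/\eta))$; the hypothesis $\ell\ge\frac\eta\delta\log\frac1\delta$ (with $\ell\ge2\eta$) then forces $\frac\eta\ell\log(\ell/\eta)=O(\delta)$, since $t\mapsto(\log t)/t$ is decreasing past $t=e$ and equals $O(\delta)$ at $t=\frac1\delta\log\frac1\delta$, so this term is $O(\delta\ell)$. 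On the empirical side one needs an a priori bound on the number of eigenvalues in short intervals about $E_1,E_2$: this is Proposition \ref{prop:ubound} ($N_J\ll n|J|$ for $|J|\ge K\log^2 n/n$), which in the present setting can also be read off the hypothesis directly, since $\text{Im}\,s(E+i\eta)\ge\frac1{2p\eta}N_{[E-\eta,E+\eta]}$ while $\text{Im}\,s_{MP,y}(E+i\eta)=\pi(\rho_{MP,y}*P_\eta)(E)\le\pi\|\rho_{MP,y}\|_\infty$, yielding $N_{[E-r,E+r]}=O(pr)$ for $r\ge\eta$ by covering. Splitting the $\lambda_j(W)$ into dyadic shells by $\text{dist}(\lambda_j,\partial I)$ — a shell at distance $\sim 2^k\eta$ holds $O(p\,2^k\eta)$ eigenvalues and each shifts $|\Phi-\mathbf 1_I|$ by $O(2^{-k})$, contributing $O(p\eta)$ — and summing over the $O(\log(\ell/\eta))$ relevant shells plus the geometrically convergent far tail gives $\sum_j|\Phi(\lambda_j)-\mathbf 1_I(\lambda_j)|=O(p\eta\log(\ell/\eta))=O(\delta p\ell)$, exactly as on the deterministic side; this completes the estimate (uniformity over $I$, if desired, follows by a routine union bound over a polynomially fine net of endpoints, since $N_I$ and $p\int_I\rho_{MP,y}$ vary by $O(1)$ under $O(1/n)$ perturbations of $E_1,E_2$). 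The routine parts are the arctan identity and feeding in the hypothesis; the genuine work is this boundary-layer bookkeeping, and within it the a priori short-interval eigenvalue bound — which is precisely where $a>0$ is essential (keeping $\rho_{MP,y}$ and hence $\text{Im}\,s_{MP,y}$ near the real axis bounded), while it is the lower bound $|I|\ge\frac\eta\delta\log\frac1\delta$ that makes a width-$O(\eta)$ boundary layer cost only $O(\delta|I|)$.
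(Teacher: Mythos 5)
The paper does not actually prove this proposition: it is imported verbatim (with some transcription slips) from Lemma 29 of \cite{tvcovariance}, so there is no in-paper argument to compare against. Your proof is essentially the standard argument behind that cited lemma --- integrate $\mathrm{Im}\, s$ over $I$ so the Poisson kernel converts the Stieltjes-transform hypothesis into control of a smoothed counting function, then remove the smoothing by a dyadic boundary-layer estimate using a short-interval eigenvalue bound together with $|I|\ge\frac{\eta}{\delta}\log\frac{1}{\delta}$ --- and it is sound, including your correct observations that $\rho_{sc}$ should read $\rho_{MP,y}$ (with $p$ in place of $n$) and that the interval must in effect lie inside the region $a\le \mathrm{Re}(z)\le b$ where the hypothesis applies, the stated ``$[a-\varepsilon,b+\varepsilon]$'' being a mis-copy of the original's inward-shrunk condition.
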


By Proposition \ref{ESDStie}, our objective is to show 
\begin{equation} \label{eq:diff}
|s(z)-s_{MP,y}(z)| =o (\delta)
\end{equation}
with (uniformly) overwhelming probability for all $z$ with $ a \le \text{Re}(z) \le b$ and $\text{Im}(z) \ge {\eta}:=\frac{K^2\log^6 n }{n \delta^8}.$

Since $s_{MP,y}(z)$ is the unique solution to the equation $$s_{MP,y}(z)+\frac{1}{y+z-1+yzs_{MP,y}(z)}=0$$ in the upper half plane (see \cite{bai2010}), we investigate a similar equation for $s(z)$.

From Lemma \ref{StieTran}, we have 
\begin{equation} \label{eq:1.6}
s(z)= \frac{1}{p} \displaystyle{\sum_{k=1}^{p} \frac{1}{\xi_{kk}-z- Y_k }}
\end{equation}
where $Y_k=a^*_k (W_{k} -zI)^{-1} a_k$, and $W_{k}$ is the matrix $W^*=\frac{1}{n}M M^*$ with the $k^{\text{th}}$ row and column removed, and $a_k$ is the $k^{\text{th}}$ row of $W$ with the $k^{\text{th}}$ element removed. Let $M_k$ be the $(p-1)\times n$ minor of $M$ with the $k^\text{th}$ row removed and $X_i^* \in \mathbb{C}^n ~(1\le i \le p)$ be the rows of $M$. Thus $\xi_{kk}={X_k}^*X_k/n= ||X_k||^2/n, a_k=\frac{1}{n}M_k X_{k}, W_k=\frac{1}{n}M_k M_k^*$. And
\begin{equation*}
Y_k  = \sum_{j=1}^{p-1} \frac{|a_k ^* v_j(M_k)|^2}{\lambda_j(W_k)-z}
= \sum_{j=1}^{p-1} \frac{1}{n} \frac{\lambda_j(W_k)|X_k^* u_j(M_k)|^2}{\lambda_j(W_k) -z}
\end{equation*}
where $u_1(M_k),\ldots,u_{p-1}(M_k) \in \mathbb{C}^{n}$ and $v_1(M_k),\ldots,v_{p-1}(M_k) \in \mathbb{C}^{p-1}$ are orthonormal right and left singular vectors of $M_k$. Here we used the facts that $a_k^* v_j(M_k)=\frac{1}{n} X_k^* M_k^*v_j(M_k)=\frac{1}{n}\sigma_j(M_k)X_k^*u_j(M_k)$ and $\sigma_j(M_k)^2=n\lambda_j(W_k)$.

\medskip
The entries of $X_k$ are independent of each other and of $W_{k}$, and have mean $0$ and variance $1$. Noticed $u_j(M_k)$ is a unit vector. By linearity of expectation we have 
$$\displaystyle\mathbf{E}(Y_k|W_{k})=\sum_{j=1}^{p-1}\frac{1}{n}\frac{\lambda_j(W_k)}{\lambda_j(W_k) -z}
=\frac{p-1}{n}+\frac{z}{n} \sum_{j=1}^{p-1}\frac{1}{\lambda_j(W_k)-z}=\frac{p-1}{n}(1+zs_k(z))$$ 
where 
$$s_{k}(z)= \frac{1}{p-1} \displaystyle{\sum_{i=1}^{p-1} \frac{1}{\lambda_i (W_{k}) -z}}$$ 
is the \textit{Stieltjes transform} for the ESD of  $W_{k}$. From the Cauchy interlacing law, we can get
$$\displaystyle{|{} s(z)- (1-\frac{1}{p}) {} s_{k}(z)|= O(\frac{1}{p} \int_{\mathbb{R}} \frac{1}{|x-z|^2}\,dx) =O(\frac{1}{p\eta})}$$ and thus 
$$\mathbf{E}(Y_k|W_{k})=\frac{p-1}{n}+z\frac{p}{n}s(z)+O(\frac{1}{n\eta})=\frac{p-1}{n}+z\frac{p}{n}s(z)+o(\delta^2).$$

In fact a similar estimate holds for $Y_k$ itself:
\begin{prop}
\label{YProp}
For $1 \le k \le n$, $Y_k= \mathbf{E}(Y_k|W_{k}) +o(\delta^2)$ holds with (uniformly) overwhelming probability for all $z$ with $a\le\text{Re}(z) \le b$ and $\text{Im}(z) \ge {\eta}$.
\end{prop}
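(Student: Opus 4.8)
The plan is to condition on $W_k$ (equivalently, on the minor $M_k$) and recognize $Y_k$ as a quadratic form in the independent vector $X_k$, to which a concentration-of-quadratic-forms estimate can be applied. Since $a_k=\frac1nM_kX_k$ and $(W_k-zI)^{-1}=\sum_{j=1}^{p-1}\frac{1}{\lambda_j(W_k)-z}v_j(M_k)v_j(M_k)^*$, and since $M_k^*v_j(M_k)=\sigma_j(M_k)u_j(M_k)$ with $\sigma_j(M_k)^2=n\lambda_j(W_k)$, one has
\[
Y_k=X_k^*BX_k,\qquad B:=\frac{1}{n^2}M_k^*(W_k-zI)^{-1}M_k=\frac1n\sum_{j=1}^{p-1}\frac{\lambda_j(W_k)}{\lambda_j(W_k)-z}\,u_j(M_k)u_j(M_k)^*,
\]
where $B$ depends only on $M_k$. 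As the entries of $X_k$ are jointly independent with mean $0$ and variance $1$ and are independent of $W_k$, we have $\mathbf{E}(Y_k\mid W_k)=\operatorname{tr}B=\frac1n\sum_{j=1}^{p-1}\frac{\lambda_j(W_k)}{\lambda_j(W_k)-z}$, which is exactly the quantity computed just before the statement. So it suffices to show $Y_k-\operatorname{tr}B=o(\delta^2)$ with overwhelming probability, uniformly in $z$ and $k$.

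First I would bound $\|B\|_{op}$ and $\|B\|_{HS}$ on $\operatorname{Im}(z)\ge\eta$, where $\eta=K^2\log^6n/(n\delta^8)$. Since $|\lambda_j(W_k)-z|\ge\operatorname{Im}(z)\ge\eta$, and since by the Marchenko--Pastur law (Theorem \ref{thm:MP}) together with the crude bound of Proposition \ref{prop:ubound} applied to $M_k$ all $\lambda_j(W_k)=O(1)$ with overwhelming probability, one gets $\|B\|_{op}\ll\frac{1}{n\eta}$. For the Hilbert--Schmidt norm, writing $\frac{\lambda_j}{\lambda_j-z}=1+\frac{z}{\lambda_j-z}$ and using $\sum_j|\lambda_j(W_k)-z|^{-2}=\frac{(p-1)\operatorname{Im}(s_k(z))}{\operatorname{Im}(z)}$, it suffices to know $\operatorname{Im}(s_k(z))=O(1)$ with overwhelming probability; this follows from Proposition \ref{prop:ubound} via a dyadic decomposition of the spectrum of $W_k$ into intervals about $\operatorname{Re}(z)$ at scales $2^\ell\operatorname{Im}(z)\ge\eta\ge K\log^2n/n$. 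Hence $\|B\|_{HS}^2\ll\frac{1}{n\operatorname{Im}(z)}\le\frac{1}{n\eta}$.

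Then I would split $Y_k-\operatorname{tr}B=\sum_l(|\zeta_{kl}|^2-1)B_{ll}+\sum_{l\neq l'}\overline{\zeta_{kl}}\,\zeta_{kl'}B_{ll'}$. The diagonal part is a sum of $n$ independent mean-zero terms with variance $O(\mathbf{E}|\zeta_{kl}|^4)|B_{ll}|^2=O(K^2)|B_{ll}|^2$ and magnitude $O(K^2)\|B\|_{op}$, so Bernstein's inequality bounds it by $O(K\|B\|_{HS}\log^{O(1)}n)$ with overwhelming probability (the $K^2$ in the variance being compensated by $\|B\|_{HS}^2\ll 1/(n\eta)$). The off-diagonal bilinear form is handled by a martingale-difference decomposition in $\zeta_{k1},\dots,\zeta_{kn}$: conditionally, the $l$-th increment is a linear form in $\zeta_{kl}$ with $\mathcal F_{l-1}$-measurable coefficients $\sum_{l'<l}\zeta_{kl'}B_{ll'}$ and $\sum_{l'<l}\overline{\zeta_{kl'}}B_{l'l}$, which themselves concentrate at scale $\|B\|_{op}$; this gives increments $O(K\|B\|_{op}\log^{O(1)}n)$ and predictable quadratic variation $O(\|B\|_{HS}^2)$ with overwhelming probability, whence Freedman's inequality yields a bound $O(\|B\|_{HS}\log^{O(1)}n)$. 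Combining, $|Y_k-\operatorname{tr}B|\ll K\|B\|_{HS}\log^{O(1)}n\ll K\log^{O(1)}n/\sqrt{n\eta}$, and substituting $\eta=K^2\log^6n/(n\delta^8)$ makes this $O(\delta^4\log^{-c}n)=o(\delta^2)$ for a fixed $c>0$ (using $\delta<1/2$; the powers of $K$ cancel). Finally, since $\|(W_k-zI)^{-1}\|_{op}\le 1/\eta=n^{O(1)}$, both $Y_k$ and $\operatorname{tr}B$ are $n^{O(1)}$-Lipschitz in $z$ on $\{a\le\operatorname{Re}(z)\le b,\ \eta\le\operatorname{Im}(z)\le1\}$, so running the estimate on an $n^{-C}$-net of that region and union-bounding over the net and over $1\le k\le n$ gives the uniform statement.

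The main obstacle is the off-diagonal concentration: one must bound $\sum_{l\ne l'}\overline{\zeta_{kl}}\,\zeta_{kl'}B_{ll'}$ to precision $o(\delta^2)$ with only the truncation bound $|\zeta_{ij}|\le K$ at one's disposal, which forces one to exploit that the predictable quadratic variation carries $\mathbf{E}|\zeta_{kl}|^2=1$ (not $K^2$) so that no ruinous power of $K$ survives, together with the a priori estimate $\operatorname{Im}(s_k(z))=O(1)$ from Proposition \ref{prop:ubound}. This is precisely the point where the calibrated choice of $\eta$ enters.
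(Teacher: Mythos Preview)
Your approach is correct and reaches the same conclusion, but it is a genuinely different route from the paper's. The paper works in the eigenbasis of $W_k$: writing $Y_k-\mathbf{E}(Y_k\mid W_k)=\frac1n\sum_j\frac{\lambda_j(W_k)}{\lambda_j(W_k)-z}R_j$ with $R_j=|X_k^*u_j(M_k)|^2-1$, it invokes the projection lemma (Lemma~\ref{lem:projection}) to control partial sums $\sum_{j\in T}R_j$, and then performs an explicit two-parameter dyadic decomposition of the spectrum of $W_k$ about $\operatorname{Re}(z)$ (with auxiliary scales $\alpha,\delta'$), feeding in the crude density bound Proposition~\ref{prop:ubound} at each scale. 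Your argument instead packages everything into the quadratic form $X_k^*BX_k$ and controls it via $\|B\|_{op}$ and $\|B\|_{HS}$; the spectral input from Proposition~\ref{prop:ubound} appears only once, in the estimate $\operatorname{Im}(s_k(z))=O(1)$ that gives $\|B\|_{HS}^2\ll 1/(n\eta)$. The paper's route keeps the dependence on the local eigenvalue density transparent and reuses only tools already set up; your route is more modular and closer to the Hanson--Wright perspective, avoiding the somewhat delicate double dyadic sum.

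One small correction that does not affect your conclusion: the predictable quadratic variation of your off-diagonal martingale is not $O(\|B\|_{HS}^2)$ with overwhelming probability but rather $O(K^2\log^{O(1)}n\,\|B\|_{HS}^2)$. Indeed, the coefficients $S_l=\sum_{l'<l}\zeta_{kl'}B_{ll'}$ concentrate (via Bernstein) as $|S_l|\ll\sigma_l\log^{1/2}n+K\max_{l'}|B_{ll'}|\log n$ with $\sigma_l^2=\sum_{l'<l}|B_{ll'}|^2$, and summing $|S_l|^2$ over $l$ picks up a $K^2$ from the second term (using $\sum_l\max_{l'}|B_{ll'}|^2\le\|B\|_{HS}^2$). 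Likewise the increment bound is $|D_l|\le K|S_l|\ll K^2\|B\|_{op}\log^{O(1)}n$, not $K\|B\|_{op}\log^{O(1)}n$. Neither matters: after Freedman you take a square root of the quadratic variation, so the off-diagonal part is still $\ll K\|B\|_{HS}\log^{O(1)}n$, and the Poisson term $K^2\|B\|_{op}\log^{O(1)}n\ll\delta^8\log^{-O(1)}n$ is harmless. Substituting $\eta=K^2\log^6 n/(n\delta^8)$ then gives $K\|B\|_{HS}\log^{O(1)}n\ll\delta^4\log^{-c}n=o(\delta^2)$ exactly as you say, with the powers of $K$ cancelling.
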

\begin{proof}
 Decompose $$Y_k-\mathbf{E}(Y_k|W_{k})=\displaystyle\sum_{j=1}^{p-1} \frac{\lambda_j(W_k)}{n} \left( \frac{|X_k^* u_j(M_k)|^2 -1}{\lambda_j(W_k)-z} \right): =\frac{1}{n} \sum_{j=1}^{p-1} \frac{\lambda_j(W_k)}{\lambda_j(W_k)-z}R_j.$$
Let $T \subset \{1,\ldots, n-1\}$. Let $H$ be the space spanned by $\{u_j (W_{k})\}$ for $j \in T$ and $P_H$ be the orthogonal projection onto $H$. Thus $\displaystyle\sum_{j\in T} R_j =||P_H(X_k)||^2- {\text{dim}(H)}.$

By Lemma \ref{lem:projection}, we conclude with overwhelming probability 
\begin{equation} \label{eq:1.9}
|\displaystyle\sum_{j\in T} R_j|  \ll {\sqrt{|T|} K \log n+ K^2 \log^2 n}
\end{equation}
Using the triangle inequality, 
\begin{equation} \label{eq:1.10}
\sum_{j\in T} |R_j| \ll {{|T|}+ K^2 \log^2 n}.
\end{equation}
Let $z=x+\sqrt{-1} {} \eta$, where $\eta =\frac{K^2 \log^6 n }{n\delta^8}$ and $a \le x \le b$. We will use two auxiliary parameters $\alpha=\delta^2 \log^{-1.1}n$,$\delta'=\delta^2 \log^{-0.1}n$ in later estimation.

First, for those $j \in T$ such that $|\lambda_j(W_{k})-x| \le \delta' \eta $, the function $\frac{\lambda_j(W_{k})}{\lambda_j(W_{k})-x -\sqrt{-1}{} \eta}$ has magnitude $O(\frac{1}{{} \eta})$. From Proposition \ref{prop:ubound}, $|T| \ll n\delta' \eta$, the contribution for these $j \in T$,
$$\displaystyle{|\frac{1}{n}\sum_{j \in T}^{} \frac{\lambda_j(W_{k})}{\lambda_{j}(W_{k})-z} R_j|} \ll \frac{1}{n\eta} \sum_{j\in J} |R_j| \ll \frac{n\delta'\eta+K^2 \log^2 n}{n\eta}=o(\delta^2).$$
For the contribution of the remaining indices, we subdivide them as 
$$(1+\alpha)^l \delta' \eta \le |\lambda_j(W_{k})-x| \le (1+\alpha)^{l+1} \delta' \eta$$
for $0 \le l \ll \log n/\alpha$, and then sum over $l$.

For each such interval, the function $\frac{\lambda_j(W_{k})}{\lambda_j(W_{k})-x -\sqrt{-1}{} \eta}$ has magnitude $O(\frac{1}{(1+\alpha)^l \delta' \eta} )$ and fluctuates by at most $O(\frac{\alpha}{(1+\alpha)^l \delta' \eta} )$. Say $T(l)$ is the set of all $j$'s in this interval, by Proposition \ref{prop:ubound}, $|T(l)| \ll  n\alpha (1+\alpha)^l \delta' \eta$. Together with bounds (\ref{eq:1.9}), (\ref{eq:1.10}), the contribution for these $j$ on such an interval,
\begin{equation*}
\begin{split}
\displaystyle{|\frac{1}{n}\sum_{j \in T(l)}^{} \frac{\lambda_j(W_{k})}{\lambda_{j}(W_{k})-z} R_j|} &\ll \frac{1}{\alpha (1+\alpha)^l \delta' \eta} \frac{\sqrt{ \alpha (1+\alpha)^l \delta' \eta n} K \log n +K^2 \log^2 n}{n} \\
&+ \frac{\alpha}{(1+\alpha)^l \delta' \eta} \frac{\alpha (1+\alpha)^l \delta' \eta +K^2 \log^2 n}{n} \\
&\ll \frac{\sqrt{\alpha}}{ \sqrt{\delta' \eta n}} K\log n + \frac{K^2 \log^2 n}{\delta' \eta n}+\alpha^2
\end{split}
\end{equation*}
Summing over $l$ (taking into account that $l \ll \log n/\alpha$), we will get 
$$\displaystyle{|\frac{1}{n}\sum_l\sum_{j \in T(l)}^{} \frac{\lambda_j(W_{k})}{\lambda_{j}(W_{k})-z} R_j|}\ll \frac{K \log^2 n}{\sqrt{\alpha \delta' \eta n}}  + \frac{K^2 \log^3 n}{\alpha \delta' \eta n} + \alpha \log n = o(\delta^2).$$
\end{proof}

Recall $s_{MP,y}(z)$ has an explicit expression $$s_{MP,y}(z)=-\frac{y+z-1-\sqrt{(y+z-1)^2-4yz}}{2yz},$$ where we take the branch of $\sqrt{(y+z-1)^2-4yz}$ with cut at $[a,b]$ that is asymptotically $y+z-1$ as $z\rightarrow \infty$.

From (\ref{eq:1.6}) and Proposition \ref{YProp}, we have with overwhelming probability that$$s(z)+ \frac{1}{\frac{p}{n}+z-1 +z\frac{p}{n} s(z)+o(\delta^2)}=0,$$ where we used Lemma \ref{lem:projection} to obtain that $\xi_{kk}=||X_k||^2/n=1+o(\delta^2)$ with overwhelming probability.

By assumption $p/n \rightarrow y$, when $n$ is large enough,
\begin{equation}\label{eq:s(z)}
s(z)+ \frac{1}{y+z-1 +yz s(z)+o(\delta^2)}=0
\end{equation}
holds with overwhelming probability.

In (\ref{eq:s(z)}), for the error term $o(\delta^2)$, one has either $\frac{o(\delta^2)}{y+z-1+yzs(z)}=o(\delta^2)$ or $y+z-1+yzs(z)=o(1)$. In the latter case, we get $s(z)=-\frac{y+z-1}{yz}+o(1)$. In the first case, we impose a Taylor expansion on (\ref{eq:s(z)}), 
$$s(z)(y+z-1+yzs(z))+1+o(\delta^2)=0.$$
Completing a perfect square for $s(z)$ in the above identity, one can solve the equation for $s(z)$, 
\begin{equation}\label{eq:eq1}
\sqrt{yz}(s(z)+\frac{y+z-1}{2yz})=\pm\sqrt{\frac{(y+z-1)^2}{4yz}-1+o(\delta^2)}
\end{equation}
If $\frac{o(\delta^2)}{\sqrt{\frac{(y+z-1)^2}{4yz}-1}}=o(\delta)$, by a Taylor expansion on the right hand side of (\ref{eq:eq1}), we have $\sqrt{yz}(s(z)+\frac{y+z-1}{2yz})=\pm \sqrt{\frac{(y+z-1)^2}{4yz}-1}+o(\delta)$. Therefore, $s(z)=s_{MP,y}(z)+o(\delta)$ or $s(z)=s_{MP,y}(z)-\frac{\sqrt{(y+z-1)^2 -4yz}}{yz}+o(\delta)=-s_{MP,y}(z)-\frac{y+z-1}{yz}+o(\delta)$. If $\frac{(y+z-1)^2}{4yz}-1=o(\delta^2)$, from (\ref{eq:eq1}) and the explicit formula for $s_{MP,y}(z)$, we still have $s(z)=s_{MP,y}(z)+o(\delta)$.

To summarize the above discussion, one has, with overwhelming probability,  either 
\begin{equation}\label{eq:diff1}
s(z)=s_{MP,y}(z)+o(\delta)
\end{equation}
or
\begin{equation}\label{eq:diff2}
s(z)=s_{MP,y}(z)-\frac{\sqrt{(y+z-1)^2 -4yz}}{yz}+o(\delta)=-s_{MP,y}(z)-\frac{y+z-1}{yz}+o(\delta)
\end{equation}
or
\begin{equation}\label{eq:diff3}
s(z)=-\frac{y+z-1}{yz}+o(1)
\end{equation}
%For $z=x+\sqrt{-1}\eta$ with $a \le x \le b$ and $\eta \ge \eta_{0}=\frac{K^2 \log^6 n}{n\delta^8}$,
We may assume the above trichotomy holds for all $z=x+\sqrt{-1}\eta$ with $a\le x\le b$ and $\eta_{0} \le \eta \le n^{10}/\delta$ where $\eta_{0}=\frac{K^2 \log^6 n}{n\delta^8}$.

When $\eta = n^{10}/\delta$, from $|s(z)| \le 1/\eta$ and $|s_{MP,y}(z)| \le 1/\eta$, we have $s(z)$ and $s_{MP,y}(z)$ are both $o(\delta)$ and therefore $(\ref{eq:diff1})$ holds in this case. By continuity, we conclude that either (\ref{eq:diff1}) holds in the domain of interest or there exists some $z$ in the domain such that (\ref{eq:diff1}) and (\ref{eq:diff2}) or (\ref{eq:diff1}) and (\ref{eq:diff3}) hold together.

On the other hand, (\ref{eq:diff1}) or (\ref{eq:diff3}) cannot hold at the same time. Otherwise, $s_{MP,y}(z)+\frac{y+z-1}{yz}=o(1)$. However, from $s_{MP,y}(z)(s_{MP,y}(z)+\frac{y+z-1}{yz}) = -\frac{1}{yz}$ and $|s_{MP,y}(z)| \le \frac{\sqrt{2}}{\sqrt{y}(1-\sqrt{y}+\sqrt{\eta_0})}$, one can see that $|s_{MP,y}(z)+\frac{y+z-1}{yz}|$ is bounded from below, which implies a contradiction.

Similarly, (\ref{eq:diff1}) or (\ref{eq:diff2}) cannot both hold except when $(y+z-1)^2-4yz=o(\delta^2)$. Otherwise, we can conclude that $2s_{MP,y}(z)+\frac{y+z-1}{yz}=o(\delta)$. From the explicit formula of $s_{MP,y}$, $$s_{MP,y}(z)+\frac{y+z-1}{yz}=\frac{\sqrt{(y+z-1)^2-4yz}}{yz}.$$One can conclude $|2s_{MP,y}(z)+\frac{y+z-1}{yz}| \ge C\delta$, which contradicts our assertion. Actually, if $(y+z-1)^2-4yz=o(\delta^2)$, (\ref{eq:diff1}) and (\ref{eq:diff2}) are equivalent.

In conclusion, (\ref{eq:diff1}) holds with overwhelming probability in the domain of interest. 

%section
\section{Gap theorem and Four Moment theorem}\label{section:main}

In this section, we complete the proofs of the main results, Theorem \ref{thm:4main} and Theorem \ref{thm:gap-c1}. The proofs follow closely those in \cite{tvcovariance} (as well as in \cite{tvrandom}, \cite{tvrandom2}), so we shall focus on the changes needed to that argument. We assume substantial familiarity with the materials in \cite{tvrandom}, \cite{tvrandom2}, and will cite from them repeatedly. 

It is convenient to use the \emph{augmented matrix} 
\begin{equation}\label{eq:M}
\textbf{M}:=
\left(
\begin{array}{cc}
0 & M^*  \\
M & 0  \end{array}
\right)
\end{equation}
which is a $(p+n) \times (p+n) $ Hermitian matrix with eigenvalues $\pm \sigma_1(M),\ldots, \pm \sigma_p(M)$ and $n-p$ zeros. 
In this way, we can import the results obtained in \cite{tvrandom}, \cite{tvrandom2} and \cite{tvcovariance} to the model discussed in this paper. 

As mentioned in the beginning of Section \ref{section:main}, one can assume that 
$$|\zeta_{ij}|, |\zeta'_{ij}| \le n^{10/C_0}$$almost surely for all $i,j$. We also assume that the distributions of $M, M'$ are continuous to ensure the singular values are almost surely simple.

Let us first state a weaker version the Four Moment Theorem as we assume gap properties for the matrices considered:

\begin{thm}[Four Moment theorem with Gap assumption]\label{thm:4main-gap}
For sufficiently small $c_0 >0$ and sufficiently large $C_0 >0$ ($C_0=10^4$ will suffice) the following holds for every $k\ge 1$. Let $M=(\zeta_{ij})_{1\le i\le p, 1\le j\le n }$ and $M'=(\zeta'_{ij})_{1\le i\le p, 1\le j\le n }$ be two random matrices satisfying condition \textbf{C1} with the indicated constant $C_0$, and assume that for each $i,j$ that $\zeta_{ij}$ and $\zeta'_{ij}$ match to order 4. Let $W, W'$ be the associated covariance matrices. Assume also that $M, M'$ obey the gap property and $p/n\rightarrow y$ for some $0<y \le 1$.

Let $G:\mathbb{R}^k \rightarrow \mathbb{R}$ be a smooth function obeying the derivative bounds 
\begin{equation}\label{eq:derivative0}
|\nabla^j G(x)| \le n^{c_0}
\end{equation}
for all $0\le j \le 5$ and $x\in \mathbb{R}^k$.

Then for any $1\le i_1 < i_2<\ldots <i_k \le n$, and for $n$ sufficiently large depending on $k, c_0$, we have 
\begin{equation}\label{eq:conclusion0}
|\E(G(n\lambda_{i_1}(W),\ldots,n\lambda_{i_k}(W) ) ) - \E(G(n\lambda_{i_1}(W'),\ldots,n\lambda_{i_k}(W') ) )| \le n^{-c_0}.
\end{equation}

If $\zeta_{ij}$ and $\zeta'_{ij}$ only match to order 3 rather 4, then the conclusion (\ref{eq:conclusion0}) still holds provided that one strengthens (\ref{eq:derivative0}) to 
$$|\nabla^j G(x)| \le n^{-jc_1}$$ for all $0\le j \le 5$ and $x\in \mathbb{R}^k$ and any $c_1 >0$, provided that $c_0$ is sufficiently small depending on $c_1$.
\end{thm}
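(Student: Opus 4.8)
### Proof plan for Theorem~\ref{thm:4main-gap}

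The plan is to follow the Lindeberg swapping strategy exactly as in \cite{tvrandom}, \cite{tvrandom2} and \cite{tvcovariance}, transplanted to the augmented matrix $\textbf{M}$ defined in \eqref{eq:M}. Working with $\textbf{M}$ rather than $W$ directly is the key bookkeeping device: its eigenvalues are $\pm\sigma_i(M)$ together with $n-p$ zeros, so controlling $n\lambda_{i_j}(W)=\sigma_{i_j}(M)^2$ is equivalent to controlling the eigenvalues of $\textbf{M}$, and the latter is an ordinary Wigner-type Hermitian matrix to which the machinery of the Four Moment Theorem for Wigner matrices applies mutatis mutandis. First I would reduce, by the usual telescoping over the (at most $pn$) entries $\zeta_{ij}$, to the problem of swapping a single entry: replace $\zeta_{ij}$ by $\zeta'_{ij}$ while keeping all other entries fixed, and show the expectation of $G$ changes by $O(n^{-2-c_0})$ (so that after $O(n^2)$ swaps the total change is $O(n^{-c_0})$, using that we may also swap the symmetric entry in $\textbf{M}$ simultaneously). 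Here one must be slightly careful that the entries of $\textbf{M}$ come in conjugate pairs, but this is handled exactly as in \cite{tvcovariance}.

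The core of the single-swap estimate is a resolvent/Taylor expansion of the quantity $G(n\lambda_{i_1}(W),\dots,n\lambda_{i_k}(W))$ in the entry being swapped, around the matrix with that entry set to zero. I would invoke the deterministic ``replacement lemma'' of \cite{tvrandom} (the analogue of Proposition 46 / Lemma 3.3 there): if a function $F$ of the matrix entry $\xi$ satisfies derivative bounds $|\partial^a_\xi F| \le n^{-(a/2)c_1 + o(1)}$ up to order $5$ with overwhelming probability, and two choices of $\xi$ match to order $4$ (resp.\ $3$), then the expectations of $F$ agree to order $n^{-5/2 + o(1)}$ (resp.\ $n^{-2 + o(1)}$), which beats $n^{-2-c_0}$ for $c_0$ small. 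Thus the whole theorem reduces to verifying these derivative bounds for $F(\xi) := G(n\lambda_{i_1}(W(\xi)),\dots)$. This in turn is where the gap hypothesis and the delocalization theorem enter: differentiating $\lambda_{i}(\textbf{M})$ in an entry produces, via the Hadamard first-variation formula and its higher-order analogues, factors of the eigenvector coordinates $u_{i}(\textbf{M})^* e_\ell$ — bounded by $K^2 n^{-1/2}\log^{O(1)}n$ by Theorem~\ref{thm:delocalization} — divided by eigenvalue gaps $\lambda_i(\textbf{M}) - \lambda_j(\textbf{M})$, which are bounded below by $n^{-1-c}$ (with high probability, and upgraded to the needed control by the gap property together with the concentration Theorem~\ref{thm:concentration} that localizes $\lambda_i$). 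Combining these, each $\xi$-derivative of $n\lambda_i(W)$ is $O(n^{o(1)})$ with overwhelming probability, and then the chain rule together with the hypothesis \eqref{eq:derivative0} (or its strengthening in the order-$3$ case) on $\nabla^j G$ gives the required bounds on $\partial^a_\xi F$; the exceptional event is absorbed because $G$ is bounded by $n^{O(1)}$ and the event has probability $n^{-\omega(1)}$.

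I expect the main obstacle to be the derivative-bound step, specifically handling the higher-order ($a=2,\dots,5$) derivatives of the eigenvalues in the presence of only the weak gap $n^{-1-c}$: naively each derivative costs a full power of the gap, i.e.\ a factor $n^{1+c}$, which would be catastrophic, so one must use the structure of the expansion (the eigenvector delocalization gives a compensating $n^{-1/2}$ per entry factor, and the sum over intermediate eigenvalues $j$ is controlled using the eigenvalue counting / rigidity estimate of Theorem~\ref{thm:concentration}, so that only $O(n^{o(1)})$ nearby levels effectively contribute). This is precisely the delicate part of the arguments in \cite{tvrandom2}, \cite{tvcovariance}, and I would import their self-improving/bootstrap bookkeeping (the ``$ERM$'' and ``$C(\cdot)$-good'' formalism of \cite{tvrandom}) essentially verbatim, checking only that the covariance-matrix-specific inputs — the interlacing identities of Lemma~\ref{lem:cauchyid}, the delocalization Theorem~\ref{thm:delocalization}, and the concentration Theorem~\ref{thm:concentration} — slot in where their Wigner analogues were used. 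The rest of the proof (the telescoping, the conjugate-pair bookkeeping, the truncation to $|\zeta_{ij}|\le n^{10/C_0}$ already performed at the start of Section~\ref{section:main}, and the limiting removal of the continuity assumption via Lemma~\ref{lem:weyl}) is routine and identical to \cite{tvcovariance}.
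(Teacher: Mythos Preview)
Your proposal is correct and follows essentially the same route as the paper: transfer the Tao--Vu Lindeberg swapping machinery to the augmented Hermitian matrix $\textbf{M}$, with Theorems~\ref{thm:concentration} and~\ref{thm:delocalization} supplying the covariance-specific inputs. The paper's only organizational difference is that it first isolates a Truncated Four Moment Theorem (Theorem~\ref{thm:4main-truncated}) in which the gap hypothesis is replaced by a built-in smooth cutoff on the quantities $Q_i(\textbf{M})=\sum_{\lambda\neq\sigma_i(M)}|\sqrt{n}(\lambda-\sigma_i(M))|^{-2}$, and then invokes the gap property only once (Proposition~\ref{prop:Q}) to show this cutoff is harmless for $M$ and $M'$; this cleanly sidesteps the issue---left implicit in your sketch---that the intermediate matrices in the telescoping are not themselves assumed to obey the gap property.
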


The Four Moment theorem follows directly from Theorem \ref{thm:gap-c1} and Theorem \ref{thm:4main-gap}. The next two sections are devoted to the proofs of Theorem \ref{thm:gap-c1} and Theorem \ref{thm:4main-gap}. 

\subsection{Proof of Theorem \ref{thm:4main-gap}}

The key technical step (also used in proving Theorem \ref{thm:gap-c1}) is the truncated Four Moment Theorem, which follows by applying \cite[Proposition 6.1 and Proposition 6.2]{tvrandom2} (or see \cite[Proposition 35]{tvcovariance}) to the \emph{argumented matrix} \textbf{M}. The proof is omitted here.

\begin{thm}[Truncated four moment theorem]\label{thm:4main-truncated}
For sufficiently small $c_0 >0$ and sufficiently large $C_0 >0$ ($C_0=10^4$ will suffice) the following holds for every $k\ge 1$. Let $M=(\zeta_{ij})_{1\le i\le p, 1\le j\le n }$ and $M'=(\zeta'_{ij})_{1\le i\le p, 1\le j\le n }$ be two random matrices satisfying condition \textbf{C1} with the indicated constant $C_0$, and assume that for each $i,j$ that $\zeta_{ij}$ and $\zeta'_{ij}$ match to order 4. Assume also that $|\zeta_{ij}|, |\zeta'_{ij}| \le n^{10/C_{0}}$ and $p/n\rightarrow y$ for some $0<y \le 1$.

Let $G:\mathbb{R}^k \times \mathbb{R}^k_{+} \rightarrow \mathbb{R}$ be a smooth function obeying the derivative bounds 
\begin{equation}\label{eq:derivative1}
|\nabla^j G(x_1,\ldots,x_k,q_1,\ldots,q_k)| \le n^{c_0}
\end{equation}
for all $0\le j \le 5$ and $x_1,\ldots, x_k \in \mathbb{R}$, $q_1,\ldots,q_k \in \mathbb{R}_{+}$, and such that $G$ is supported on the region $q_1,\ldots,q_k \le n^{c_0}$, and the gradient $\nabla$ is in all $2k$ variables.

Then for any $1\le i_1 < i_2<\ldots <i_k \le n$, and for $n$ sufficiently large depending on $\varepsilon, k, c_0$, we have 
\begin{equation}\label{eq:conclusion1}
\begin{split}
&|\E(G(\sqrt{n}\sigma_{i_1}(M),\ldots,\sqrt{n}\sigma_{i_k}(M), Q_{i_1}(M),\ldots, Q_{i_k}(M) ) ) \\
&- \E(G(\sqrt{n}\sigma_{i_1}(M'),\ldots,\sqrt{n}\sigma_{i_k}(M'), Q_{i_1}(M'),\ldots, Q_{i_k}(M') ) )| \le n^{-c_0}.
\end{split}
\end{equation}

If $\zeta_{ij}$ and $\zeta'_{ij}$ only match to order 3 rather 4, then the conclusion (\ref{eq:conclusion1}) still holds provided that one strengthens (\ref{eq:derivative1}) to 
$$|\nabla^j G(x_1,\ldots,x_k,q_1,\ldots,q_k)| \le n^{-jc_1}$$ for any $c_1 >0$, provided that $c_0$ is sufficiently small depending on $c_1$.

\end{thm}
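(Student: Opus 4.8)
The plan is to deduce this from the truncated four moment theorem for Hermitian random matrices --- \cite[Propositions 6.1 and 6.2]{tvrandom2}, or the covariance template \cite[Proposition 35]{tvcovariance} --- applied to the augmented matrix $\textbf{M}$ of \eqref{eq:M}. Recall that $\textbf{M}$ is $(n+p)\times(n+p)$ Hermitian with nonzero eigenvalues $\pm\sigma_1(M),\dots,\pm\sigma_p(M)$ and corresponding unit eigenvectors $\tfrac1{\sqrt2}\binom{u_i(M)}{\pm v_i(M)}$, so that each coordinate of an eigenvector of $\textbf{M}$ is, up to the factor $1/\sqrt2$, a coordinate of a left or right singular vector of $M$. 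Accordingly one fixes $Q_i(M)$ to be the eigenvector-regularity statistic built from singular-vector coordinates (by symmetry it suffices to control one such coordinate, say $Q_i(M)=n|e_1^*u_i(M)|^2$), chosen so that it agrees up to a bounded multiplicative factor with the regularity statistic \cite{tvrandom2} attaches to the $i$-th positive eigenvalue of $\textbf{M}$; then $G(\sqrt n\sigma_{i_\ell}(M),Q_{i_\ell}(M))$ is, after the routine rescaling built into the normalization conventions, a test function of the eigenvalues and eigenvector statistics of $\textbf{M}$ of exactly the type treated in \cite{tvrandom2}.

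First I would check that $\textbf{M}$ and $\textbf{M}'$ meet the hypotheses of that Hermitian theorem: the nonzero entries of $\textbf{M}$ are the $\zeta_{ij}$ together with their conjugates, jointly independent subject only to the Hermitian constraint, of mean zero and variance one, bounded by $n^{10/C_0}$ almost surely, and matching those of $\textbf{M}'$ to order $4$ entrywise; the vanishing of the two diagonal blocks (and the $n-p$ forced zero eigenvalues) is a harmless special case. The eigenvector-delocalization input needed by the swapping argument --- which for a genuine Wigner matrix is supplied by Erd\H os--Schlein--Yau --- is here provided, in the spectral range of interest, by Theorem \ref{thm:delocalization}; crucially no gap or level-repulsion property of $\textbf{M}$ is invoked at this stage, which is precisely why the truncated conclusion holds under condition \textbf{C1} alone. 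Swapping a single entry $\zeta_{ij}\mapsto\zeta'_{ij}$ is a self-adjoint, rank-$\le 2$ perturbation of $\textbf{M}$ supported in the $(i,j)$ off-diagonal block, so the Taylor expansion to fourth order (plus a fifth-order remainder) of the eigenvalues and of the chosen eigenvector coordinates in the swapping parameter is literally the one in \cite{tvrandom2}: the first four terms cancel in expectation by the order-$4$ matching, and the fifth-order remainder is absorbed using the derivative bounds \eqref{eq:derivative1} together with the support restriction $q_1,\dots,q_k\le n^{c_0}$, which confines $G$ to the event on which the eigenvector coordinates entering the error terms are of size $O(n^{(c_0-1)/2})$, so that no a priori gap estimate is required.

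Summing the $O(np)$ single-entry swaps and using $p/n\to y$ to keep every error exponent strictly below $c_0$ for $n$ large then yields \eqref{eq:conclusion1}; the weakening from order-$4$ to order-$3$ matching, compensated by the stronger bound $|\nabla^j G|\le n^{-jc_1}$, is handled exactly as in the Wigner case, the extra factor gained per derivative being what absorbs the now-uncancelled third-order term provided $c_0$ is small relative to $c_1$. I expect the real work to be bookkeeping rather than new ideas: one must check that $Q_i(M)$ as used here matches, up to the constant relating coordinates of eigenvectors of $\textbf{M}$ to coordinates of singular vectors of $M$, the statistic required by the cited propositions, so that the support condition transfers correctly; and one must confirm that the chiral block structure of $\textbf{M}$ --- in particular the block of forced zeros --- does not disturb the steps of \cite{tvrandom2} that localize attention near a single eigenvalue $\sigma_{i_\ell}(M)$. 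Both points are routine given the dictionary above, which is why only a sketch is needed.
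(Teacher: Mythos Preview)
Your high-level plan is exactly the paper's: the proof is obtained by applying \cite[Propositions~6.1 and~6.2]{tvrandom2} (equivalently \cite[Proposition~35]{tvcovariance}) to the augmented Hermitian matrix $\mathbf{M}$ of \eqref{eq:M}, and indeed the paper omits any further detail. So at the level of strategy you are aligned with the paper.

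There is, however, a genuine misconception in your sketch that would make the argument fail as written. You propose to take $Q_i(M)=n|e_1^*u_i(M)|^2$, an eigenvector-coordinate statistic. That is not what $Q_i$ is in \cite{tvrandom2} or here: the quantity is the \emph{gap} statistic
\[
Q_i(\mathbf{M}) \;=\; \sum_{\lambda\neq\sigma_i(M)}\frac{1}{|\sqrt{n}(\lambda-\sigma_i(M))|^2}
\;=\;\frac{1}{n}\Bigl(\sum_{j\neq i}\frac{1}{|\sigma_j-\sigma_i|^2}+\frac{n-p}{\sigma_i^2}+\sum_{j}\frac{1}{|\sigma_j+\sigma_i|^2}\Bigr),
\]
the sum of inverse squared distances from $\sigma_i(M)$ to the other eigenvalues of $\mathbf{M}$. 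Consequently the support restriction $q_1,\dots,q_k\le n^{c_0}$ does \emph{not} confine $G$ to an event about eigenvector coordinates; it confines $G$ to the event where no other eigenvalue of $\mathbf{M}$ lies within distance $n^{-1/2-c_0/2}$ of $\sigma_{i_\ell}(M)$. This is precisely what makes the resolvent/Neumann-series perturbation expansion of $\lambda_{i_\ell}$ under a single-entry swap convergent and controllable, and is the mechanism by which the truncated theorem avoids assuming the gap property. The eigenvector delocalization (Theorem~\ref{thm:delocalization}) enters separately and unconditionally, exactly as you say, to bound the sizes of the perturbation coefficients; it is not what the $Q_i$-truncation is buying you.

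Once $Q_i$ is corrected to the gap statistic, your remaining bookkeeping points (dictionary between eigenvectors of $\mathbf{M}$ and singular vectors of $M$, harmlessness of the $n-p$ forced zero eigenvalues, the order-$3$ variant) are indeed routine and match how the paper and \cite{tvcovariance} handle them.
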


As in the arguments in Section 6 in \cite{tvcovariance}, we use the qualities for $1\le i \le p$,
\begin{equation*}
\begin{split}
Q_i(\textbf{M})&:= \sum_{\lambda \neq \sigma_i(M)} \frac{1}{| \sqrt{n}(\lambda- \sigma_i(M)) |^2}\\
&=\frac{1}{n} (\sum_{1 \le j \le p: j\neq i} \frac{1}{| \sigma_j(M)-\sigma_i(M) |^2} +\frac{n-p}{\sigma_i(M)^2} +\sum_{j=1}^p \frac{1}{|\sigma_j(M)+\sigma_i(M)|^2} ).
\end{split}
\end{equation*}

The gap property (up to the edge) on M ensures an upper bound on $Q_i(\textbf{M})$. The proof repeats exactly the proof of Lemma 32 in \cite{tvcovariance}.
\begin{prop}\label{prop:Q}
If M satisfies the gap property, then for any $c_0 >0$(independent of n), and any $1\le i \le p$, one has $Q_i(\textbf{M}) \le n^{c_0}$ with high probability.
\end{prop}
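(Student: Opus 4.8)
\textbf{Proof proposal for Proposition \ref{prop:Q}.}

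The plan is to bound each of the three sums in the definition of $Q_i(\textbf{M})$ separately, using the gap property to control consecutive spacings and the concentration/upper-bound results from Section 4 to control the density of singular values. First I would recall that $\sigma_j(M)^2 = n\lambda_j(W)$, so that $\sigma_j(M) = \Theta(\sqrt n)$ for $j$ in the bulk and, crucially, $\sigma_j(M) \gg \sqrt n$ uniformly in $j$ once $a>0$, since the Marchenko--Pastur law places the smallest eigenvalue near $a>0$ with overwhelming probability. This immediately handles the term $\frac{n-p}{n\sigma_i(M)^2} = O(1/n)$ and the sum $\frac{1}{n}\sum_{j=1}^p |\sigma_j(M)+\sigma_i(M)|^{-2} = O(p/n \cdot 1/n) = O(1/n)$, both of which are $\le n^{c_0}$ trivially.

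The main term is $\frac{1}{n}\sum_{j\neq i} |\sigma_j(M)-\sigma_i(M)|^{-2}$. Here I would follow the argument of \cite[Lemma 32]{tvcovariance} verbatim: split the sum according to the index distance $|j-i|$. The gap property gives $|\sigma_{j+1}(M) - \sigma_j(M)| = |\sqrt n(\lambda_{j+1}(W)^{1/2} - \lambda_j(W)^{1/2})| \gg n^{-1/2-c}$ with high probability for a single pair, and more relevantly, combined with the eigenvalue concentration Theorem \ref{thm:concentration} (or Proposition \ref{prop:ubound}), for indices $j$ with $|j-i| = m$ one has $|\sigma_j(M) - \sigma_i(M)| \gg m n^{-1/2} \log^{-O(1)} n$ with overwhelming probability, because an interval of length $\ell$ contains $O(n\ell)$ singular values (after the change of variables $\sigma = \sqrt{n\lambda}$, which is bi-Lipschitz on the relevant range since $a>0$). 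Summing, $\frac1n \sum_{m\ge 1} (m n^{-1/2}\log^{-O(1)}n)^{-2} = \frac1n \cdot n \log^{O(1)} n \sum_{m\ge 1} m^{-2} = O(\log^{O(1)} n)$, which is $\le n^{c_0}$ for $n$ large. One must take a little care for the finitely many small values of $m$ where the gap property is the binding estimate rather than the counting estimate; there one uses $|\sigma_j - \sigma_i| \gg n^{-1/2-c}$ directly, contributing $O(n^{2c})$ terms' worth, still $\le n^{c_0}$ provided $c$ is chosen small relative to $c_0$.

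The step I expect to be the main (though still routine, given the toolbox) obstacle is passing cleanly from the gap property \emph{for eigenvalues of $W$} to a usable lower bound on $|\sigma_j(M) - \sigma_i(M)|$ that is uniform over index distances, and in particular confirming that the change of variables $\lambda \mapsto \sqrt{n\lambda}$ does not degrade the estimates at the lower edge --- this is exactly where the hypothesis $a>0$ is used, since near a hard edge $a=0$ the map $\lambda\mapsto\sqrt\lambda$ is not Lipschitz and the argument would break. Since $a>0$, on the spectral range $[a/2, 2b]$ the derivative of $\lambda \mapsto \sqrt{n\lambda}$ is $\Theta(\sqrt{n})$, so spacings of order $n^{-1}$ in $\lambda$ correspond to spacings of order $n^{-1/2}$ in $\sigma$, and the counting bounds transfer with only constant loss. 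Once this correspondence is in place the estimate is identical to \cite[Lemma 32]{tvcovariance}, and we conclude that $Q_i(\textbf{M}) \le n^{c_0}$ with high probability. \endprf
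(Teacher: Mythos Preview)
Your proposal is correct and follows essentially the same approach as the paper's (which defers to \cite[Lemma 32]{tvcovariance}): dispose of the two harmless sums using $\sigma_j(M)=\Theta(\sqrt n)$ when $a>0$, and control the main sum by combining the gap property for the nearest indices with the eigenvalue counting bound (Proposition \ref{prop:ubound}/Theorem \ref{thm:concentration}) for the rest via a dyadic or index-distance decomposition. Your remark that the bi-Lipschitz change of variables $\lambda\mapsto\sqrt{n\lambda}$ on $[a/2,2b]$ is what makes the transfer from eigenvalue gaps to singular-value gaps lossless is exactly the point, and your identification of $a>0$ as the place this is used is correct.
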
 

Now Theorem \ref{thm:4main-gap} follows by defining $\tilde{G}: \mathbb{R}^k \times \mathbb{R}_{+}^{k} \rightarrow \mathbb{R}$ to be $$\displaystyle\tilde{G}(\sqrt{n}\sigma_{i_1},\ldots,\sqrt{n}\sigma_{i_k}, Q_{i_1},\ldots, Q_{i_k}) := G(\sqrt{n}\sigma_{i_1},\ldots,\sqrt{n}\sigma_{i_k}) \prod_{j=1}^k\eta(Q_{i_j})$$
where $\eta(x)$ is a smooth cutoff to the region $x\le n^{c_0}$ which equals $1$ on $x\le n^{c_0}/2$. From Propositon \ref{prop:Q}, we have $$|\E(G(\sqrt{n}\sigma_{i_1}(M),\ldots,\sqrt{n}\sigma_{i_k}(M)))- \E(\tilde{G} (\sqrt{n}\sigma_{i_1}(M),\ldots,\sqrt{n}\sigma_{i_k}(M), Q_{i_1}(\textbf{M}),\ldots, Q_{i_k}(\textbf{M})))| \ll \frac{1}{n^c},$$
for some $c>0$, and a similar relation holds for $M'$. The proof is complete by using the above relations and Theorem \ref{thm:4main-truncated}.

\subsection{Proof of the Gap Theorem}\label{subsection:gap}

We first have a gap theorem under additional exponential decay hypothesis on the ensembles of $M$. The proof is presented in Section \ref{subsection:gap-expo}.

\begin{thm}[Gap theorem up to the edge]\label{thm:gap-expo} Let $M=(\zeta_{ij})_{1\le i\le p, 1\le j\le n }$ be a random matrix obeying condition \textbf{C1}, and the entries $\zeta_{ij}$ satisfy exponential decay in the sense that $\P(|\zeta_{ij}| \ge t^C) \le \exp(-t)$ for all $t\ge C'$ for all $i,j$ and some constants $C,C'>0$. Then $M$ obeys the gap property.
\end{thm}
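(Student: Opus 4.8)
Here is a plan of proof, following the strategy of the gap theorem in \cite{tvcovariance} (itself modelled on \cite{tvrandom}), with the bulk inputs replaced by the up-to-the-edge statements proved above and an edge level-repulsion estimate inserted for the solvable model. Throughout one passes to the augmented matrix $\mathbf{M}$ of (\ref{eq:M}), so that the singular values of $M$ become the positive eigenvalues of $\mathbf{M}$, and (as already noted) one may assume $|\zeta_{ij}| \le K = n^{10/C_0}$; the exponential decay hypothesis is stronger and will be used only in Step 1. The index range $\varepsilon n \le i \le (1-\varepsilon)n$ is treated in \cite{tvcovariance}, so the new content is the edge range $1 \le i \le \varepsilon n$ and $(1-\varepsilon) n \le i \le p$, in which $\lambda_i(W)$ lies within $o(1)$ of the edge $a$ (resp.\ $b$), which by hypothesis is $>0$, and the typical spacing is of order $n^{-2/3}$ rather than $n^{-1}$.

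\textbf{Step 1 (Gauss-divisible ensembles).} One first establishes the gap property for an auxiliary matrix $\hat M = (1-t)^{1/2} M_0 + t^{1/2} G$ with $t = n^{-c'}$ for a small fixed $c'>0$, where $M_0$ has exponentially decaying entries and $G$ has i.i.d.\ (real or complex) Gaussian entries. Conditioning on $M_0$, the matrix $\hat W = \frac1n \hat M^*\hat M$ is a Gaussian perturbation of a fixed matrix, whose eigenvalue density is given by an explicit Harish-Chandra/Itzykson-Zuber-type formula — a determinantal process in the complex case — depending on $M_0$ only through the singular values of $M_0$; this is the deformed Laguerre model analysed by Johansson and by Ben Arous--P\'ech\'e. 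On the overwhelmingly likely event (in $M_0$) furnished by Theorem \ref{thm:concentration}, the singular values of $M_0$ are rigid, and from the corresponding two-point kernel one extracts a level-repulsion bound of the shape
\begin{equation*}
\P\big( |\lambda_{i+1}(\hat W)-\lambda_i(\hat W)| \le s/n \mid M_0 \big) \ll n^{O(1)} s, \qquad 0 < s \le 1,
\end{equation*}
uniformly in $i$, both in the bulk and near the soft edges $a,b$ (near an edge the kernel degenerates to an Airy-type kernel, for which the repulsion is in fact stronger at scale $n^{-1-c}$). Taking $s = n^{-c}$ and combining with the eigenvalue rigidity from Theorem \ref{thm:concentration} gives $|\lambda_{i+1}(\hat W) - \lambda_i(\hat W)| \ge n^{-1-c}$ with high probability, i.e.\ $\hat M$ obeys the gap property; Proposition \ref{prop:Q} then also gives $Q_i(\hat{\mathbf{M}}) \le n^{c_0}$ with high probability.

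\textbf{Step 2 (transfer to $M$).} Since each $\zeta_{ij}$ has continuous distribution, the Gauss-divisible matching construction of \cite{tvrandom} produces, for every $i,j$, a random variable of the above form, with exponential decay, matching $\zeta_{ij}$ to order $4$; let $\hat M$ be the resulting matrix, which by Step 1 has the gap property. One then applies the truncated Four Moment Theorem (Theorem \ref{thm:4main-truncated}) with $k=2$ to a test function $\Phi(x_1,x_2,q_1,q_2)$ that smoothly approximates the indicator of $\{|x_2-x_1| \le \theta\} \cap \{q_1,q_2 \le n^{c_0}/2\}$ and is supported on $\{q_1,q_2 \le n^{c_0}\}$, where $x_j := \sqrt n\,\sigma_{i+j-1}(M)$ and $\theta$ is the size of the gap $n^{-1-c}$ measured in the $x$-variables. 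Because $\sigma = (n\lambda)^{1/2}$ and $\lambda_i,\lambda_{i+1}$ are bounded away from $0$, a $\lambda$-gap of $n^{-1-c}$ corresponds to $\theta = n^{-c+o(1)}$ — in the bulk \emph{and} at the edge (where the typical $x$-spacing is of order $n^{1/3}$) — so $\Phi$ can be taken with $|\nabla^j \Phi| \le n^{c_0}$ for $0 \le j \le 5$ provided $c < c_0$; since the gap property for larger $c$ follows a fortiori from the case of small $c$, this is the only range we need. Theorem \ref{thm:4main-truncated} then gives
\begin{equation*}
\P\big( |\lambda_{i+1}(W) - \lambda_i(W)| \le n^{-1-c}\ \text{and}\ Q_i(\mathbf{M}),Q_{i+1}(\mathbf{M}) \le n^{c_0} \big) \le \E\,\Phi(\hat{\mathbf{M}}) + n^{-c_0} = o(1)
\end{equation*}
by Step 1 applied to $\hat{\mathbf{M}}$. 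The auxiliary truncation $\{Q_i(\mathbf{M}),Q_{i+1}(\mathbf{M}) \le n^{c_0}\}$ is removed exactly as in \cite{tvrandom, tvcovariance}: a crude a priori bound $Q_i(\mathbf{M}) = O(n^{O(1)})$ with overwhelming probability — obtained by negligibly smoothing the entry distributions, applying a standard Wegner-type estimate, and using Lemma \ref{lem:weyl} — together with the elementary observation that the event $\{|\lambda_{i+1}(W)-\lambda_i(W)| \le n^{-1-c}\}$ contributes only $O(n^{2c}) = o(n^{c_0})$ to $Q_i(\mathbf{M})$, reduces the bad-gap probability to the quantity just bounded. This yields $|\lambda_{i+1}(W) - \lambda_i(W)| \ge n^{-1-c}$ with high probability, as required.

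I expect the principal obstacle to be the \emph{edge} level-repulsion estimate of Step 1: controlling the deformed Laguerre kernel near the soft edge at the fine scale $n^{-1-c}$, uniformly over the (random but, by Theorem \ref{thm:concentration}, typical) singular-value configuration of $M_0$. The bulk analogue is already in \cite{tvcovariance}; the edge analysis is the new ingredient here, and it is precisely the point at which the up-to-the-edge concentration (Theorem \ref{thm:concentration}) and delocalization (Theorem \ref{thm:delocalization}) results of Section 4 enter — both to supply the rigidity of $M_0$ and to make the truncated Four Moment machinery run at the edge.
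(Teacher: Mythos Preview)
Your route is genuinely different from the paper's. The paper proves Theorem~\ref{thm:gap-expo} \emph{directly}, without any auxiliary solvable model: after disposing of the extreme indices $i=1,p$ via the interlacing identity (Proposition~\ref{prop:gap-edge}), it introduces the regularized gap $g_{i,l,p}$ of \eqref{eq:reggap} and runs the backwards-propagation argument of \cite{tvrandom,tvrandom2} on the augmented matrices. A small gap for $M_{p+1,n}$ forces one of six ``bad events'' for $M_{p,n}$ after a row is removed (Proposition~\ref{prop:backwardsgap}), and each of these is then shown to be rare using only Theorems~\ref{thm:concentration} and~\ref{thm:delocalization} together with the tail bounds of Lemma~\ref{lem:tail}. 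No level-repulsion input for a determinantal model is invoked anywhere; the edge concentration and delocalization theorems of Section~4 are the \emph{only} new analytic ingredients.

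In your scheme, by contrast, the substantive work is pushed into Step~1, which you yourself flag as the principal obstacle and leave open: a level-repulsion bound for the deformed Laguerre ensemble near the soft edge, at scale $n^{-1-c}$ and \emph{uniform over the conditioning on $M_0$}, is not available from the Ben Arous--P\'ech\'e analysis in the form you need, and establishing it would be a separate paper-length undertaking. The paper's argument bypasses this entirely. Separately, your removal of the $Q_i$-truncation in Step~2 does not work as written: the observation that the adjacent gap contributes only $O(n^{2c})$ to $Q_i(\textbf{M})$ says nothing about the remaining terms in the sum defining $Q_i$, so a crude a~priori bound $Q_i=O(n^{O(1)})$ does not let you conclude that $\{Q_i>n^{c_0}\}$ is contained (up to negligible probability) in the event already controlled. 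The clean fix is the device the paper itself uses in Section~\ref{subsection:gap} to pass from exponential decay to condition~\textbf{C1}: apply Theorem~\ref{thm:4main-truncated} to the single test function $\eta(Q_i)$ and transfer $\E\,\eta(Q_i)$ directly, which immediately yields $Q_i\le n^{c_0}$ with high probability and hence the gap.
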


The next observation is the following matching lemma (See Lemma 33 in \cite{tvcovariance}), which together with Theorem \ref{thm:4main-truncated}, ensures us to remove the exponential decay hypothesis in Theorem \ref{thm:gap-expo}.

\begin{lem}[Matching lemma,\cite{tvcovariance}]\label{matching}
Let $\zeta$ be a complex random variable with mean zero, unit variance, and third moment bounded by some constant $a$. Then there exists a complex random variable $\tilde \zeta$ with support bounded by the ball of radius $O_a(1)$ centered at the origin (and in particular, obeying the exponential decay hypothesis uniformly in $\zeta$ for fixed $a$) which matches $\zeta$ to third order.
\end{lem}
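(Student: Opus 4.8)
Since the statement involves only moments of order $\le 3$, I would first unpack what matching to third order means. Writing $\xi:=\mathrm{Re}\,\zeta$, $\eta:=\mathrm{Im}\,\zeta$, the goal is to produce $\tilde\zeta=\tilde\xi+i\tilde\eta$ with $\E\,\tilde\xi^{m}\tilde\eta^{l}=\E\,\xi^{m}\eta^{l}$ for all $m+l\le 3$: nine nontrivial real constraints (the two first moments vanish; three second moments, tied by $\E\xi^2+\E\eta^2=1$; four third moments). By H\"older together with $\E|\zeta|^3\le a$, each of these nine target numbers lies in an interval depending only on $a$, so the target vector $\mathbf m=\mathbf m(\zeta)$ runs over a fixed compact set $K_a\subset\mathbb R^9$ as $\zeta$ varies; since the construction will depend only on $\mathbf m$, uniformity in $\zeta$ for fixed $a$ will come for free.

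The plan is then to realize $\mathbf m$ as the moment vector of a finitely supported probability measure living in a ball of radius $R=R(a)$. Let $C_R\subset\mathbb R^{10}$ denote the set of moment vectors of degrees $\le 3$ of probability measures on $\overline{B(0,R)}\subset\mathbb C$; by weak-$*$ compactness $C_R$ is compact and convex and equals the convex hull of $\{\phi(z):|z|\le R\}$, where $\phi(z)$ collects the monomials $\mathrm{Re}(z)^m\mathrm{Im}(z)^l$, $m+l\le 3$. The dilation $z\mapsto Rz$ scales the order-$j$ block of $\phi$ by $R^{j}$, so $C_R$ is the image of $C_1$ under the corresponding diagonal scaling; since the order-$j$ block of $\mathbf m$ is $O_a(1)$, dividing it by $R^j$ drives $\mathbf m$ toward the moment vector of $\delta_0\in C_1$, and I would check that it in fact lands inside $C_1$ once $R$ is large in terms of $a$ --- this is the step that genuinely uses that $\mathbf m$ is an honest moment vector of a variable with $\E|\zeta|^3\le a$ rather than an arbitrary point of $\mathbb R^9$. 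Once $\mathbf m\in C_R$, Carath\'eodory's theorem in $\mathbb R^{10}$ writes $\mathbf m$ as a convex combination of at most $11$ points $\phi(z_i)$, $|z_i|\le R$; the resulting atomic law is $\tilde\zeta$, and bounded support immediately gives the exponential-decay bound uniformly.

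To make the delicate ``lands inside $C_1$'' step concrete I would also carry out the construction by hand. In the one-real-variable case ($\eta\equiv0$) only mean $0$, variance $1$ and third moment $b$ with $|b|\le a$ are at stake, and the three-point law with mass $\frac1{u(u+v)}$ at $u$, mass $\frac1{v(u+v)}$ at $-v$, and the remainder at $0$ works whenever $u-v=b$ and $uv\ge 1$, which is solvable with $u,v\le 1+a$. In the genuinely two-dimensional case I would first match the mean and the $2\times2$ second-moment matrix $\Sigma$ exactly with a symmetric base law in the unit ball (mass $\lambda_i/2$ at $\pm w_i$, where $\Sigma=\sum_i\lambda_iw_iw_i^{\!\top}$; this has vanishing third moments), then install the third-moment tensor by mixing in a bounded correction, which is harmless exactly when $\Sigma$ is non-degenerate --- so degenerate directions should be peeled off into the one-variable case first. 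The hard part is not producing \emph{some} finitely supported matching law (that is immediate from Tchakaloff/Carath\'eodory applied to the law of $\zeta$ itself), but controlling the radius of its support by $a$ alone; securing that uniformity is precisely the job of the scaling and compactness argument (or of the explicit base-plus-correction construction), and is the one non-routine point.
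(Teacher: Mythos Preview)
The paper does not prove this lemma; it is quoted as Lemma~33 of \cite{tvcovariance} and used as a black box, so there is no in-paper argument to compare your attempt against.

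Your three-point construction in the real case is correct and lands in $[-(a+1),a+1]$. For the genuinely complex case, however, both of your proposed routes stall at exactly the step you flag as ``non-routine,'' and the obstruction there is structural rather than a detail to be filled in. With only $\E|\zeta|^{2}=1$ and $\E|\zeta|^{3}\le a$ assumed, the covariance matrix of $(\xi,\eta)$ may have an arbitrarily small eigenvalue $\varepsilon=\E\eta^{2}$ while $|\E\eta^{3}|$ remains of order $a$: take $\xi,\eta$ independent with $\xi=\pm\sqrt{1-\varepsilon}$ fair and $\eta$ three-valued with an atom of mass $\asymp\varepsilon/M^{2}$ at height $M\asymp a/\varepsilon$; then $\E|\zeta|^{3}=O(1)+O(\varepsilon M)=O(a)$ but $\E\eta^{3}\asymp\varepsilon M\asymp a$. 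Since any $\tilde\zeta$ supported in $\{|z|\le R\}$ satisfies $|\E(\mathrm{Im}\,\tilde\zeta)^{3}|\le R\,\E(\mathrm{Im}\,\tilde\zeta)^{2}$, matching forces $R\ge|\E\eta^{3}|/\varepsilon\asymp M$, which is unbounded as $\varepsilon\downarrow 0$. This is precisely why your rescaled moment vector only approaches the \emph{extreme} point $\phi(0)\in\partial C_{1}$ (so convergence yields nothing), and why ``peeling off degenerate directions'' handles $\varepsilon=0$ but not $\varepsilon\to 0$: you correctly located the one non-routine point, but it is an actual obstruction, not a gap to be closed. The lemma as literally stated here, reading ``unit variance'' as $\E|\zeta|^{2}=1$, therefore fails; the Tao--Vu statement you are reconstructing presumably pins down the full second-moment structure (real entries, or $\mathbb{C}$-normalised entries with $\E\xi^{2}=\E\eta^{2}=\tfrac12$ and $\E\xi\eta=0$), and under that extra hypothesis your base-plus-correction scheme can indeed be completed with atoms at $O_{a}(1)$ locations.
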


Now consider the matrix $M=(\zeta_{ij})_{1\le i\le p, 1\le j \le n}$ in Theorem \ref{thm:gap-c1}. By the matching lemma, we can find a random matrix $M'=(\zeta'_{ij})_{1\le i\le p, 1\le j \le n}$ such that $\zeta'_{ij}$ satisfies the exponential decay hypothesis and $\zeta'_{ij}$ matches $\zeta_{ij}$ to third order for each $i,j$. By Theorem \ref{thm:gap-expo}, the matrix $M'$ obeys the gap property. Similarly as in Section 6, \cite{tvcovariance}, let $\eta(x)$ be a smooth cutoff to the region $x\le n^{c_0}$. Then by Proposition \ref{prop:Q}, $\E\eta(Q_i(M')) = 1-O(n^{-c_1})$, which, by using Theorem \ref{thm:4main-truncated}, implies that $\E\eta(Q_i(M')) = 1-O(n^{-c_2})$  for some $c_2$ independent of $n$. Hence, $M$ also satisfies the gap property.

\subsection{Proof of Theorem \ref{thm:gap-expo}:}\label{subsection:gap-expo}

The proof follows closely to that discussed in Section 5, \cite{tvrandom2}. We shall mainly mention the corresponding changes. Interested readers can find the detailed proofs in \cite{tvrandom}. First, in order to operate an induction argument, we need to treat the edge case $i=1,p$ separately. 

\begin{prop}[Extreme cases]\label{prop:gap-edge} 
Theorem \ref{thm:gap-expo} is true when $i=1$ or $i=p$.
\end{prop}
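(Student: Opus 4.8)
The plan is to handle the extreme singular values $\sigma_1(M)$ (the smallest) and $\sigma_p(M)$ (the largest) by establishing a quantitative lower bound on the relevant gap, namely $|\sigma_2(M)-\sigma_1(M)| \ge n^{-1/2-c}$ and $|\sigma_p(M)-\sigma_{p-1}(M)| \ge n^{-1/2-c}$ with high probability, which upon passing to eigenvalues of $W=\frac1n M^*M$ translates to the stated $n^{-1-c}$ bound (since $\lambda_i(W) = \sigma_i(M)^2/n$ and the $\sigma_i$ are of size $\Theta(\sqrt n)$ near the edges $a,b>0$). For the largest singular value $\sigma_p$, the argument should mirror Section 5 of \cite{tvrandom2}: since $\sigma_p(M)^2/n = \lambda_p(W)$ concentrates near $b=(1+\sqrt y)^2$, I would use the augmented matrix $\textbf{M}$ together with the coordinate formula (Lemma \ref{lem:coordinate}) and the interlacing identity (Lemma \ref{lem:cauchyid}). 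Removing the last column $X$ of $M$ gives $M_{p,n-1}$; if $\sigma_p(M)$ and $\sigma_{p-1}(M)$ were within $n^{-1/2-c}$ of each other, then by Cauchy interlacing (Lemma \ref{lem:cauchy}(iii)) there is a singular value $\sigma_p(M_{p,n-1})$ trapped extremely close to $\sigma_p(M)$, which forces the coordinate $|x|^2$ of the top singular vector of $M_{p,n}$ along $e_n$ to be abnormally small — contradicting the delocalization bound of Theorem \ref{thm:delocalization} (applied with a suitable small $\delta$), which guarantees every coordinate is of size at most $K^2 n^{-1/2}\log^{O(1)} n$. More precisely, one estimates $|x|^{-2} = 1 + \sum_j \frac{\sigma_j(M_{p,n-1})^2}{(\sigma_j(M_{p,n-1})^2-\sigma_p(M)^2)^2}|v_j(M_{p,n-1})^*X|^2$ and shows the $j$ corresponding to the trapped singular value already makes this sum of size $\gg n^{1+2c}\log^{-O(1)} n$, using Lemma \ref{lem:projection} to control $|v_j(M_{p,n-1})^*X|$ from below on the relevant set of indices (here the exponential decay hypothesis lets us take $K = \log^{O(1)} n$).

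For the smallest singular value $\sigma_1(M)$, the situation is the genuine "lower edge" $a=(1-\sqrt y)^2 > 0$ (recall the $y=1$ hard-edge case is explicitly excluded), so $\sigma_1(M) = \Theta(\sqrt n)$ as well, and the same scheme applies: remove a column (or row) to produce a minor whose interlacing singular value is pinned near $\sigma_1(M)$ under the assumption that $\sigma_2(M)-\sigma_1(M)$ is too small, then derive a contradiction with delocalization via the coordinate formula. One subtlety is that near the smallest singular value the sign of the denominators $\sigma_j(M_{p,n-1})^2 - \sigma_1(M)^2$ is mostly one-signed, so I would be a little careful to extract the correct comparison, much as in the proof of Theorem \ref{thm:delocalization} above where the two cases $|\lambda_i(W)-a|\le o(1)$ and $|\lambda_i(W)-b|\le o(1)$ were treated separately via residue calculus; but the structural mechanism (trapped interlacing value $\Rightarrow$ small singular-vector coordinate $\Rightarrow$ contradiction) is identical.

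I would organize it as: (1) reduce the claim to a gap lower bound on $\sigma_1,\sigma_{p-1},\sigma_p$ via $\lambda_i(W) = \sigma_i(M)^2/n$ and the Marchenko–Pastur edge localization of these singular values (Theorem \ref{thm:MP}, plus the quantitative version from Theorem \ref{thm:concentration}); (2) condition on the delocalization event and the event $\frac1p\|X\|^2 = 1+o(1)$, both of overwhelming probability; (3) assume for contradiction the gap is $\le n^{-1-c}$ at the eigenvalue level, apply Cauchy interlacing to locate a minor singular value within that distance; (4) plug into Lemma \ref{lem:coordinate} and bound the single dominant term from below using Lemma \ref{lem:projection}, concluding $|x|^2 \ll n^{-1-2c}\log^{O(1)}n$, contradicting delocalization. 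The main obstacle I anticipate is step (4) in the smallest-singular-value case: making the lower bound on $|v_j(M_{p,n-1})^*X|$ (equivalently, verifying $X$ is genuinely not near-orthogonal to the relevant left singular vector, so that Lemma \ref{lem:interlacing1}/\ref{lem:cauchyid} applies and the term does not accidentally vanish) uniform over the $i=1$ index, since the concentration-of-measure lower bounds in Lemma \ref{lem:projection} and Lemma \ref{lem:tail} are only effective for projections onto subspaces of dimension growing with $n$, whereas here we may be isolating essentially one direction. This is handled in \cite{tvrandom2} by grouping $\gg \log^{O(1)} n$ nearby indices together rather than a single one; I would follow that device, which is also exactly what was done in the proof of Theorem \ref{thm:delocalization} above with the parameters $T_-, T_+$.
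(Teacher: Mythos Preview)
Your argument has the direction of the delocalization bound reversed. Theorem \ref{thm:delocalization} is an \emph{upper} bound on each coordinate of a singular vector; showing that the coordinate $|x|^2$ is abnormally small (your step (4)) is perfectly consistent with it and yields no contradiction. So the mechanism ``trapped interlacing value $\Rightarrow$ small singular-vector coordinate $\Rightarrow$ contradiction with delocalization'' does not work as stated.

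The paper's proof runs in the opposite direction and uses a feature specific to the extreme indices. Remove a \emph{row} to get $M_{p-1,n}$ and apply the interlacing identity (Lemma \ref{lem:cauchyid}):
\[
\sum_{j=1}^{p-1}\frac{\tfrac1n\,\sigma_j(M_{p-1,n})^2\,|u_j(M_{p-1,n})^*Y|^2}{\sigma_j(M_{p-1,n})^2-\sigma_p(M_{p,n})^2}=\frac{\|Y\|^2}{n}-\lambda_p(W)=1-b+o(1).
\]
Because $i=p$ is extreme, Cauchy interlacing forces every denominator to have the \emph{same sign}, so the sum bounds each individual term:
\[
\frac{\tfrac1n\,\sigma_{p-1}(M_{p-1,n})^2\,|u_{p-1}(M_{p-1,n})^*Y|^2}{|\sigma_{p-1}(M_{p-1,n})^2-\sigma_p(M_{p,n})^2|}\ll 1.
\]
Delocalization then enters only to certify the incompressibility hypothesis \eqref{summer} for $u_{p-1}(M_{p-1,n})$, after which Lemma \ref{lem:tail} gives a \emph{lower} bound $|u_{p-1}(M_{p-1,n})^*Y|^2\ge n^{-c/10}$ with high probability. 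Plugging this into the displayed inequality yields $|\sigma_{p-1}(M_{p-1,n})^2-\sigma_p(M_{p,n})^2|\gg n^{-c}$, and Cauchy interlacing transfers this to $\lambda_p(W)-\lambda_{p-1}(W)$. The case $i=1$ is symmetric. The key point you are missing is the same-sign observation; without it the identity does not bound a single term, and without a lower bound on the inner product (not an upper bound on a coordinate) you cannot convert boundedness of the term into a gap.
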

\begin{proof}
By symmetry, it suffices to show for $i=p$. In the interlacing identity (Lemma \ref{lem:cauchyid}), 
$$\displaystyle \sum_{j=1}^{p-1} \frac{\frac{1}{n}\sigma_j(M_{p-1,n})^2 |u_j (M_{p-1,n})^* Y|^2}{\sigma_j(M_{p-1,n})^2 - \sigma_p(M_{p,n})^2} = \frac{||Y||^2}{n} - \frac{1}{n}\sigma_p(M_{p,n})^2.
$$
On the right-hand side of the above identity, $Y\in \mathbb{C}^n$, by Lemma \ref{lem:projection}, $||Y||^2/n = 1+o(1)$ with overwhelming probability. And $\sigma_p(M_{p,n})^2 /n =\lambda_p(W)=b+o(1)$ with overwhelming probability. All the terms in the left-hand side is are negative signs, thus 
$$\frac{\frac{1}{n}\sigma_j(M_{p-1,n})^2 |u_j (M_{p-1,n})^* Y|^2}{|\sigma_j(M_{p-1,n})^2 - \sigma_p(M_{p,n})^2|} \ll 1.$$
From Theorem \ref{thm:delocalization} and Lemma \ref{lem:tail}, one can conclude that $|u_{p-1}(M_{p-1,n})^* Y|^2 \ge n^{-c/10}$ with high probability. Therefore, $|\sigma_j(M_{p-1,n})^2 - \sigma_p(M_{p,n})^2| \ge n^{-c}$ with high probability. The conclusion follows from the Cauchy interlacing law.
\end{proof}

For the general case for the gap theorem, we write $i_0, p_0$ instead of $i,p$ and define $N_0:=p_0+n$, as in \cite{tvrandom}, we introduce the \emph{regularized gap}
\begin{equation}\label{eq:reggap}
\displaystyle g_{i,l,p}:= \text{inf}_{1 \le i_{-} \le i-l < i \le i_+ \le p}^{} \frac{ \sqrt{N_0} \sigma_{i_+}(M_{p,n}) - \sqrt{N_0} \sigma_{i_{-}}(M_{p,n}) } { \text{min} (i_{+} - i_{-}, \log^{C_1} N_0)^{\log^{0.9} N_0} },
\end{equation}
where $C_1 >1 $ is a large constant to be determined later. To show Theorem \ref{thm:gap-expo}, it is enough to show that $$g_{i_0,1,p_0} \le n^{-c_0},$$ for $1 < i_0 < p_0.$

By repeating the arguments in Section 3.5, \cite{tvrandom}, the proof relies on the following two key propositions. The idea is to propagate a narrow gap for $M_{p,n}$ backwards in $p$ until one can use Theorem \ref{thm:concentration} to control the occurrence of the gap.

\begin{prop}[Backwards propagation of gap].\label{prop:backwardsgap}
Suppose $p_0/2 \le p < p_0$ and $l \le p/10$ is such that 
\begin{equation}\label{eq:1}
g_{i_0,l,p+1} \le \delta
\end{equation}
for some $1< \delta \le 1$(which can depend on $n$), and that 
\begin{equation}\label{eq:edge}
\sqrt{n}\sigma_{p+1}(M_{p+1,n})-\sqrt{n}\sigma_{p}(M_{p+1,n}) \ge \delta \exp(\log^{0.91} n_0)
\end{equation}
Then $i_0 \le p$. Suppose further that
$$g_{i_0,l+1,p} \ge 2^m g_{i_0,l,p+1}$$ for some $m\ge 0$ with $$2^m \le \delta^{-1/2}.$$
Let $X_{p+1}$ be the $p+1^{\text{th}}$ row of $M_{p_0,n}$, and let $u_1(M_{p,n}), \ldots, u_p(M_{p,n})$ be an orthonormal system of right singular vectors of $M_{p,n}$ associated to $\sigma_1(M_{p,n}), \ldots, \sigma_p(M_{p,n})$. Then one of the following statement holds:
\begin{itemize}
\item[(i)](Macroscopic spectral concentration) There exists $1\le i_{-} < i_+ \le p+1$ with $i_+- i_- \ge \log^{C_1/2} n$ such that $| \sqrt{n}\sigma_{i_+}(M_{p+1,n})-\sqrt{n}\sigma_{i_-}(M_{p+1,n}) | \le \delta^{1/4} \exp(\log^{0.95}) (i_+-i_-).$
\item[(ii)](Small inner products) There exists $1\le i_- \le i_0 -l < i_0 \le i_+ \le p$ with $i_+ - i_- \le \log^{C_1/2} n$ such that $$\displaystyle\sum_{i_- \le j < i_+} |X^*_{p+1} u_j (M_{p,n})|^2 \le \frac{i_+ -i_-}{2^{m/2}\log^{0.01} n}.$$
\item[(iii)](Large singular value) For some $1\le i \le p+1$ one has $$| \sigma_i(M_{p+1,n}) | \ge \frac{\sqrt{n} \exp(-\log^{0.96}n)}{\delta^{1/2}}.$$
\item[(iv)](Large inner product) There exists $1\le i \le p$ such that $$|X^*_{p+1} u_i(M_{p,n})|^2 \ge \frac{\exp(-\log^{0.96}n)}{\delta^{1/2}}.$$
\item[(v)](Large row) We have $$|| X_{p+1}||^2 \ge \frac{n\exp(-\log^{0.96}n)}{\delta^{1/2}}.$$
\item[(vi)](Large inner product near $i_0$) There exists $1\le i \le p$ with $|i-i_0| \le \log^{C_1}n$ such that $$|X^*_{p+1} u_i(M_{p,n})|^2 \ge 2^{m/2} \log^{0.8}n.$$
\end{itemize}

\end{prop}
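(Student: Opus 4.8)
## Proof proposal for Proposition (Backwards propagation of gap)

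The plan is to transplant the argument of \cite[Section 5]{tvrandom2} (see also \cite[Proposition 36]{tvcovariance}) to the augmented-matrix setting, exploiting the interlacing identity of Lemma \ref{lem:cauchyid}. First I would dispose of the trivial assertion $i_0 \le p$: if $i_0 = p+1$, then $g_{i_0, l, p+1}$ measures gaps at the very top of the spectrum of $M_{p+1,n}$, and hypothesis \eqref{eq:edge} forces $\sigma_{p+1}(M_{p+1,n}) - \sigma_p(M_{p+1,n})$ to be comparable to $\delta\exp(\log^{0.91}n_0)/\sqrt n$, which contradicts \eqref{eq:1} once one unwinds the definition \eqref{eq:reggap} of the regularized gap (the normalizing $\min(\cdot)^{\log^{0.9}N_0}$ is at most $\exp(\log^{0.91}n_0)$ for the relevant range). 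So from now on $1 < i_0 \le p$, and the two orthonormal systems of singular vectors in the statement make sense.

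The heart of the matter is the following dichotomy. Assume none of (iii), (iv), (v), (vi) holds; I must then produce either (i) or (ii). Since $M_{p+1,n}$ is obtained from $M_{p,n}$ by appending the row $X_{p+1}^*$, Lemma \ref{lem:cauchyid} (second identity, with $Y = X_{p+1}$) gives, for each $i$,
\begin{equation}\label{eq:lacegap}
\sum_{j=1}^{p}\frac{\sigma_j(M_{p,n})^2\,|u_j(M_{p,n})^* X_{p+1}|^2}{\sigma_j(M_{p,n})^2 - \sigma_i(M_{p+1,n})^2} = \|X_{p+1}\|^2 - \sigma_i(M_{p+1,n})^2,
\end{equation}
and the interlacing law (Lemma \ref{lem:cauchy}(ii)) tells us $\sigma_j(M_{p,n})$ and $\sigma_i(M_{p+1,n})$ interlace. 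The strategy, exactly as in \cite{tvrandom2}, is to suppose toward a contradiction that the gap $\sqrt n\sigma_{i_0}(M_{p+1,n}) - \sqrt n\sigma_{i_0-1}(M_{p+1,n})$ is small (of order $\lesssim \delta$) and that (i) fails, and to localize \eqref{eq:lacegap} at $i = i_0$: the terms with index $j$ near $i_0$ dominate, because the denominators $\sigma_j^2 - \sigma_{i_0}^2$ are tiny there, and negations of (iii)--(vi) control all the ``far'' contributions and the size of $\|X_{p+1}\|^2$. The amplification hypothesis $g_{i_0,l+1,p} \ge 2^m g_{i_0,l,p+1}$ is what converts a mere narrow gap into a \emph{quantitatively} narrow one relative to the coarser scale, and feeds the factor $2^{m/2}$ into the right-hand side of (ii). Carrying the bookkeeping through, one finds that unless the inner products $|X_{p+1}^*u_j(M_{p,n})|^2$ summed over a block $[i_-,i_+)$ straddling $i_0$ are abnormally small — which is precisely alternative (ii) — the identity \eqref{eq:lacegap} cannot balance, giving the contradiction. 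The macroscopic-concentration escape hatch (i) is what one falls back on when the block one is forced to use is too long (length $\ge \log^{C_1/2}n$), i.e. when the gap is narrow not because of a local accident but because a whole mesoscopic stretch of the spectrum has collapsed.

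I would organize the estimates into three regimes for the sum in \eqref{eq:lacegap} at $i=i_0$: (a) indices $j$ with $|j - i_0| > \log^{C_1}n$, handled by Theorem \ref{thm:concentration} (rigidity of the ESD on scale $K^2\log^{O(1)}n/n$ up to the edge) together with the delocalization bound of Theorem \ref{thm:delocalization}, so each such term is $O(1)$ after the $1/n$ normalization and they sum to $O(1)$; (b) indices with $\log^{C_1/2}n < |j-i_0| \le \log^{C_1}n$, controlled by the regularized-gap bound \eqref{eq:1} applied at scale $l$ — these give denominators bounded below by roughly $\delta\cdot|j-i_0|^{\log^{0.9}N_0}$, contributing a convergent geometric-type series; (c) the core block $|j-i_0| \le \log^{C_1/2}n$, where everything is decided. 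In regime (c) one splits off the single term $j = i_0$ (or the pair $i_0, i_0-1$), uses that by Lemma \ref{lem:cauchy}(ii) this term is negative and of magnitude $\gtrsim |X_{p+1}^*u_{i_0}|^2 \sigma_{i_0}^2/(\text{gap})$, and balances it against \eqref{eq:lacegap}; negations of (iv) and (vi) pin down $|X_{p+1}^*u_{i_0}|^2$ from above, negation of (v) pins down $\|X_{p+1}\|^2$, negation of (iii) keeps all $\sigma_i(M_{p+1,n})^2 = \Theta(n)$, and what survives forces the block sum in (ii). I expect regime (c) — getting the constants and the power of $2^{m/2}$ to line up exactly so that the threshold in (ii) is $\frac{i_+-i_-}{2^{m/2}\log^{0.01}n}$ rather than something weaker — to be the main obstacle; this is where the argument of \cite{tvrandom2} is most delicate, and where the augmented-matrix reformulation (so that $\sigma_i(M)$ becomes a genuine eigenvalue $\pm\sigma_i(\mathbf M)$ and Lemma \ref{lem:cauchyid} literally is the eigenvalue interlacing identity Lemma \ref{lem:interlacing1}) has to be invoked carefully so that the $n-p$ extra zero eigenvalues and the $\sigma_j + \sigma_{i_0}$ ``mirror'' denominators in $Q_{i_0}(\mathbf M)$ do not spoil the localization. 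Since, as the statement says, the proof repeats that of \cite{tvrandom2} almost verbatim once these adaptations are in place, I would present only these changes in detail and cite \cite{tvrandom2} for the rest.
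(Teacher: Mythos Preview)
Your overall strategy is right, but you are doing more work than the paper does. The paper's proof is essentially a one-liner: apply Lemma~5.3 of \cite{tvrandom2} (the Hermitian backwards-propagation lemma) directly to the augmented matrices
\[
A_{p+n+1}=\sqrt n\begin{pmatrix}0 & M_{p+1,n}^*\\ M_{p+1,n}&0\end{pmatrix},\qquad
A_{p+n}=\sqrt n\begin{pmatrix}0&M_{p,n}^*\\ M_{p,n}&0\end{pmatrix},
\]
observing that $A_{p+n}$ is the minor obtained by deleting the last row and column of $A_{p+n+1}$ (which is $X_{p+1}$ padded with $p+1$ zeros), that the eigenvalues of $A_{p+n}$ are $\pm\sqrt n\,\sigma_j(M_{p,n})$ together with $n-p$ zeros, and that the relevant eigenvectors are $\binom{u_j(M_{p,n})}{v_j(M_{p,n})}$. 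The Hermitian lemma then outputs exactly alternatives (i)--(vi); the additional ``Large coefficient'' alternative from \cite{tvrandom2} is vacuous because the diagonal of $A_{p+n+1}$ is identically zero. No three-regime analysis needs to be repeated.

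Your route via the singular-value identity \eqref{eq:lace2} would also succeed --- since $\sigma_j^2-\sigma_i^2=(\sigma_j+\sigma_i)(\sigma_j-\sigma_i)\sim 2\sqrt n(\sigma_j-\sigma_i)$ one can pass freely between squared and linear gaps --- but it forces you to redo bookkeeping that the augmented-matrix black box absorbs automatically, including the ``mirror'' denominators $\sigma_j+\sigma_{i_0}$ and the extra zero eigenvalues you flag as obstacles. One concrete slip: in your regime~(a) you invoke Theorem~\ref{thm:concentration} and Theorem~\ref{thm:delocalization}, but this proposition is purely deterministic (it asserts that the failure of (i)--(vi) is incompatible with the gap hypotheses); the probabilistic inputs enter only in the \emph{next} proposition (``Bad events are rare''). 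Control of the far indices comes from the negation of alternative~(i), not from rigidity.
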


\begin{proof} Apply Lemma 5.3 in \cite{tvrandom2} to the \emph{augmented matrix} $$A_{p+n+1}:=\sqrt{n}
\left(
\begin{array}{cc}
0 & M^*_{p+1,n}  \\
M_{p+1,n} & 0  \end{array}
\right) $$ and $$A_{p+n}:=\sqrt{n}
\left(
\begin{array}{cc}
0 & M^*_{p,n}  \\
M_{p,n} & 0  \end{array}
\right). $$ Noticed $A_{p+n}$ is $A_{p+n+1}$ with the rightmost column and bottom column(which is $X_{p+1}$ and $p+1$ zeros) removed. The eigenvalues of $A_{p+n}$ are $\pm \sqrt{n}\sigma_1(M_{p,n}),\ldots,\pm \sqrt{n} \sigma_p(M_{p,n})$ and $0$, and an orthonormal eigenbasis includes the vectors 
$\left( \begin{array}{c} u_j(M_{p,n}) \\
v_j(M_{p,n}) \end{array} \right)$ for $1\le j \le p$. (The "Large coefficient" event in Lemma 5.3, \cite{tvrandom2}) cannot occur as $A_{p+n+1}$ has zero diagonals.)
\end{proof}

The next proposition claims that the events (i)-(vi) occurs with small probability.
\begin{prop}[Bad events are rare]. Suppose that $p_0/2 \le p < p_0$ and $l\le p/10$ and set $\delta:=n_0^{-\kappa}$ for some sufficiently small fixed $\kappa >0$. Then
\begin{itemize}
\item[(a)]The events (i), (iii), (iv), (v) in Proposition \ref{prop:backwardsgap} all fail with high probability.
\item[(b)]There is a constant $C'$ such that all the coefficients of the right singular vectors $u_j(M_{p,n})$ for $1\le j \le p$ are of magnitude at most $n^{-1/2}\log^{C'}n$ with overwhelming probability. Conditioning $M_{p,n}$ to be a matrix with this property, the events (ii) and (vi) occur with a conditional probability of at most $2^{-\kappa m} +n^{-\kappa}$.
\item[(c)]Furthermore, there is a constant $C_2$ (depending on $C', \kappa, C_1$) such that if $l\ge C_2$ and $M_{p,n}$ is conditioned as in (b), then (ii) and (vi) in fact occur with a conditional probability of at most $2^{-\kappa m}\log^{-2C_1} n + n^{-\kappa}$.
\end{itemize}
\end{prop}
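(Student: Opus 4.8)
The plan is to transcribe, via the augmented matrices $A_{p+n+1},A_{p+n}$ of Proposition \ref{prop:backwardsgap}, the corresponding estimates of \cite[\S5]{tvrandom2} (in the form used for covariance matrices in \cite[\S5]{tvcovariance}), now fed with the edge-of-spectrum inputs proved above. Two structural facts are used throughout. First, the row $X_{p+1}$ of $M_{p_0,n}$ is independent of $M_{p,n}$. Second, under the exponential-decay hypothesis of Theorem \ref{thm:gap-expo} one may take the truncation level to be $K=\log^{O(1)}n$, so that Theorem \ref{thm:delocalization} (applied to the $p\times n$ minor $M_{p,n}$, continuity being removed as in the remark after that theorem) gives the coefficient bound $|u_j(M_{p,n})^{*}e_l|\le n^{-1/2}\log^{C'}n$ with overwhelming probability for a suitable fixed $C'$; this is the constant in (b). Finally, note that $\delta=n_0^{-\kappa}$ is a fixed negative power of the matrix size while $\exp(\pm\log^{0.95}n)$ and $\exp(\pm\log^{0.96}n)$ are only $n^{o(1)}$.

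For part (a): event (v) fails with overwhelming probability because $\|X_{p+1}\|^{2}=n(1+o(1))$ by Lemma \ref{lem:projection}, far below $n\exp(-\log^{0.96}n)\delta^{-1/2}$; event (iii) fails because the largest singular value obeys $\sigma_{p+1}(M_{p+1,n})=O(\sqrt n)$ with overwhelming probability (the standard operator-norm bound for exponentially decaying entries), far below $\sqrt n\exp(-\log^{0.96}n)\delta^{-1/2}$; event (iv) fails after a union bound over $1\le i\le p$, since conditioned on the delocalization event $X_{p+1}^{*}u_i(M_{p,n})$ is the projection of $X_{p+1}$ onto a line and hence of size $\log^{O(1)}n$ with overwhelming probability by Lemma \ref{lem:projection}, far below $\exp(-\log^{0.96}n)\delta^{-1/2}$; and event (i) is excluded by the concentration estimate. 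For the last point, write $\sqrt n\sigma_i(M_{p+1,n})=n\lambda_i(W_{p+1,n})^{1/2}$ with $\lambda_i(W_{p+1,n})=\Theta(1)$ (here the hypothesis $a>0$ enters). If (i) held, a $\lambda$-interval of length at most $\delta^{1/4}\exp(\log^{0.95}n)(i_+-i_-)/n$ would contain at least $i_+-i_-\ge\log^{C_1/2}n$ eigenvalues of $W_{p+1,n}$; enlarging this interval if necessary to length $K\log^{2}n/n$ and invoking Proposition \ref{prop:ubound} (or Theorem \ref{thm:concentration}) bounds its eigenvalue count by either $\log^{O(1)}n$ or $o(i_+-i_-)$, contradicting $\ge\log^{C_1/2}n$ once $C_1$ is chosen large enough.

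For parts (b) and (c): condition on $M_{p,n}$ being delocalized and put $S_j:=X_{p+1}^{*}u_j(M_{p,n})$; for a block $i_-\le j<i_+$ of length $N$ the $N\times n$ matrix with rows $\overline{u_j(M_{p,n})}$ has orthonormal rows with all entries at most $\sigma:=n^{-1/2}\log^{C'}n$, so the two tail bounds of Lemma \ref{lem:tail} apply. Event (vi) is treated like (iv) but at the finer scale: the upper tail bound gives, for each of the $\le\log^{C_1}n$ relevant $i$, $\P(|S_i|^{2}\ge 2^{m/2}\log^{0.8}n)\ll\exp(-c2^{m/2}\log^{0.8}n)+Cn^{-1/2}\log^{C'}n$, and a union bound yields $\ll 2^{-\kappa m}+n^{-\kappa}$ once $\kappa\le\tfrac14$ and $n$ is large. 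Event (ii) is the lower-tail event: with $t:=\sqrt N\,2^{-m/4}\log^{-0.005}n\le\sqrt N$ the lower tail bound gives
\begin{equation*}
\P\Big(\textstyle\sum_{i_-\le j<i_+}|S_j|^{2}\le\tfrac{N}{2^{m/2}\log^{0.01}n}\Big)\ll\big(C2^{-m/4}\log^{-0.005}n\big)^{\lfloor N/4\rfloor}+CN^{5/2}2^{3m/4}n^{-1/2}\log^{C'+1}n ,
\end{equation*}
where the second term is $\ll n^{-\kappa}$ for $\kappa$ small, and for $N\le 3$ (which only arises when $l\le 3$, i.e.\ in (b)) the first term is replaced by the elementary anti-concentration estimate $\ll 2^{-m/4}\log^{-0.005}n+\sigma$ for a single coordinate $S_j$. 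Summing over the $\le\log^{C_1}n$ admissible pairs $(i_-,i_+)$, and absorbing the resulting polylogarithmic factor by taking $\kappa$ small, gives (b). For (c), the extra hypothesis $l\ge C_2$ forces $N\ge C_2$, so $\lfloor N/4\rfloor\ge\lfloor C_2/4\rfloor$ and the first term is at most $(C2^{-m/4}\log^{-0.005}n)^{\lfloor C_2/4\rfloor}$; choosing $C_2$ large enough in terms of $C_1,C',\kappa$ so that $0.005\lfloor C_2/4\rfloor\ge 3C_1$ not only absorbs the $\log^{C_1}n$ union-bound loss but leaves the surplus $\log^{-2C_1}n$, producing the bound $2^{-\kappa m}\log^{-2C_1}n+n^{-\kappa}$.

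I expect the only genuine difficulty to be the polylogarithmic bookkeeping in (b) and (c). One has to fix the exponents in the right order — $\kappa$ small first, then $C_1$ large (as forced by the regularized gap \eqref{eq:reggap} and the narrow-gap iteration), and only then $C_2$ large in terms of $C_1$ — so that the $\lfloor N/4\rfloor$-th power gained in the lower tail bound of Lemma \ref{lem:tail} strictly beats the union-bound loss over blocks; and the short-block cases $N\le 3$, which occur only in (b), must be handled separately by anti-concentration for a single linear form. Everything else is a line-by-line transcription of the corresponding arguments of \cite[\S5]{tvrandom2} through the augmented matrix, using Theorem \ref{thm:concentration} and Theorem \ref{thm:delocalization} in place of their bulk counterparts.
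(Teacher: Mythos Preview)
Your proposal is correct and follows essentially the same approach as the paper: the paper's own proof is a single sentence directing the reader to repeat the proof of Proposition~53 in \cite{tvrandom}, replacing the bulk concentration and delocalization inputs there by Theorem~\ref{thm:concentration} and Theorem~\ref{thm:delocalization}. Your write-up is a faithful fleshing-out of exactly that transcription (via the augmented matrices and Lemmas~\ref{lem:projection} and~\ref{lem:tail}), and your bookkeeping for the polylogarithmic exponents in (b) and (c) is the right one.
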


The proof of the above proposition repeats the proof of Proposition 53 in \cite{tvrandom} with the major difference being that Theorem \ref{thm:concentration} and Theorem \ref{thm:delocalization} are applied instead of Theorem 60 and Proposition 62 in \cite{tvrandom}.

%%%%%%%%bibliography%%%%%%%%%%%

\bibliographystyle{plain}
\bibliography{covariance-edge}

\begin{thebibliography}{10}

\bibitem{BS1}
Z.~Bai and J.W. Silverstein.
\newblock {On the empirical distribution of eigenvalues of a class of large
  dimensional random matrices}.
\newblock {\em Journal of Multivariate Analysis}, 54(2):175--192, 1995.

\bibitem{bai2010}
Z.~Bai and J.W. Silverstein.
\newblock {\em {Spectral analysis of large dimensional random matrices}}.
\newblock Springer Verlag, 2010.

\bibitem{BF}
A.~Borodin and P.J. Forrester.
\newblock {Increasing subsequences and the hard-to-soft edge transition in
  matrix ensembles}.
\newblock {\em Journal of Physics A: Mathematical and General}, 36:2963, 2003.

\bibitem{Edelman}
A.~Edelman.
\newblock {\em {Eigenvalues and condition numbers of random matrices}}.
\newblock PhD thesis, Massachusetts Institute of Technology, 1989.

\bibitem{ESY3}
L.~Erd\"os, B.~Schlein, and H.T. Yau.
\newblock {Semicircle law on short scales and delocalization of eigenvectors
  for Wigner random matrices}.
\newblock {\em The Annals of Probability}, 37(3):815--852, 2009.

\bibitem{ESY2}
L.~Erd\"os, B.~Schlein, and H.T. Yau.
\newblock {Universality of random matrices and local relaxation flow}.
\newblock {\em Submitted to Inv. Math. arxiv. org/abs/0907.5605}, 2009.

\bibitem{ESY1}
L.~Erd\"os, B.~Schlein, and H.T. Yau.
\newblock {Local semicircle law and complete delocalization for Wigner random
  matrices. Accepted in Comm. Math. Phys}.
\newblock {\em Communications in Mathematical Physics}, 287(2):641--655, 2010.

\bibitem{FS}
O.N. Feldheim and S.~Sodin.
\newblock {A universality result for the smallest eigenvalues of certain sample
  covariance matrices}.
\newblock {\em Geometric And Functional Analysis}, 20(1):88--123, 2010.

\bibitem{MP}
VA~Mar{\v{c}}enko and LA~Pastur.
\newblock {Distribution of eigenvalues for some sets of random matrices}.
\newblock {\em Mathematics of the USSR-Sbornik}, 1:457, 1967.

\bibitem{tvcovariance}
T.~Tao and V.~Vu.
\newblock {Random covariance matrices: Universality of local statistics of
  eigenvalues}.
\newblock {\em Arxiv preprint arXiv:0912.0966}, 2009.

\bibitem{tvsingular}
T.~Tao and V.~Vu.
\newblock {Random matrices: The distribution of the smallest singular values}.
\newblock {\em Geometric And Functional Analysis}, 20(1):260--297, 2010.

\bibitem{tvrandom2}
T.~Tao and V.~Vu.
\newblock {Random matrices: Universality of local eigenvalue statistics up to
  the edge}.
\newblock {\em Communications in Mathematical Physics}, pages 1--24, 2010.

\bibitem{tvrandom}
T.~Tao and V.~Vu.
\newblock {Random matrices: Universality of local eigenvalue statistics}.
\newblock {\em Acta Mathematica}, pages 1--78, 2011.

\end{thebibliography}

%%%%%%%%%%%%%%%%%%%%%%

\end{document}